\documentclass[reqno]{amsart}

\usepackage{general,resistance,ifthen,txfonts}
\usepackage{cancel,xy}

\usepackage[bookmarks, colorlinks=true, pdfstartview=FitV, linkcolor=blue, citecolor=blue, urlcolor=blue]{hyperref}
\usepackage{cite}  

\usepackage[left=1.4in,right=1.4in,top=1.2in,bottom=1.2in]{geometry}

\newcommand{\ipH}[2][{\sH}]{\left\la #2 \right\ra_{\scalebox{0.50}{\ensuremath{#1}}}}
\newcommand{\proj}[2][{\sH}]{\operatorname{proj}_{\scalebox{0.50}{\ensuremath{#1}}} \clspn{#2}}
\newcommand{\bH}{\ensuremath{\mathbb{H}}\xspace}
\DeclareMathOperator*{\punt}{\bullet} 
\newcommand{\vertex}[1]{\raisebox{-4mm}{$\punt\limits_{#1}$}}   

\numberwithin{equation}{section} \numberwithin{theorem}{section}

\begin{document}


\title{Spectral reciprocity and matrix representations of unbounded operators}

\author[P. Jorgensen]{Palle E. T. Jorgensen}
\address{University of Iowa, Iowa City, IA 52246-1419 USA}
\email{\href{mailto:jorgen@math.uiowa.edu}{jorgen@math.uiowa.edu}}
\urladdr{\href{http://www.math.uiowa.edu/~jorgen/}{http://www.math.uiowa.edu/$\sim$jorgen/}}

\author[E. Pearse]{Erin P. J. Pearse}
\address{University of Iowa, Iowa City, IA 52246-1419 USA}
\email{\href{mailto:erin-pearse@uiowa.edu}{erin-pearse@uiowa.edu}}
\urladdr{\href{http://www.math.uiowa.edu/~epearse/}{http://www.math.uiowa.edu/$\sim$epearse/}}

\thanks{The work of PETJ was partially supported by NSF grant DMS-0457581. The work of EPJP was partially supported by the University of Iowa Department of Mathematics NSF VIGRE grant DMS-0602242.}

\keywords{Dirichlet form, resistance forms, effective resistance metric, graph energy, discrete potential theory, quantum information, graph Laplacian, spectral graph theory, electrical resistance network, harmonic, Hilbert space, reproducing kernel, essentially self-adjoint, unbounded linear operator, tree, frame.}

\subjclass[2000]{
  Primary:
  05C50, 
  46E22, 
  47B25, 
  47B32, 
  60J10, 
  Secondary:
  42C25, 
  47B39, 
}

\date{\bf\today.}

\begin{abstract}
  We study a family of unbounded Hermitian operators in Hilbert space which generalize the usual graph-theoretic discrete Laplacian. For an infinite discrete set $X$, we consider operators acting on Hilbert spaces of functions on $X$, and their representations as infinite matrices; the focus is on $\ell^2(X)$, and the energy space $\mathcal{H}_{\mathcal E}$. In particular, we prove that these operators are always essentially self-adjoint on $\ell^2(X)$, but may fail to be essentially self-adjoint on $\mathcal{H}_{\mathcal E}$. In the general case, we examine the von Neumann deficiency indices of these operators and explore their relevance in mathematical physics. Finally we study the spectra of the $\mathcal{H}_{\mathcal E}$ operators with the use of a new approximation scheme.
\end{abstract}




\maketitle

\setcounter{tocdepth}{1} {\footnotesize \tableofcontents}

\allowdisplaybreaks



\section{Introduction}
\label{sec:introduction}

This paper concerns the study of unbounded operators with dense domain in a Hilbert space, and their representation in terms of (infinite) matrices. In particular, Theorem~\ref{thm:matrix-conds-for-essential-selfadjointness} shows that a ``matrix Laplacian'' is always essentially self-adjoint on $\ell^2(X)$. This class of operators is a generalization of the usual discrete Laplacian from graph theory. We also show how the same operator is \emph{not} essentially self-adjoint with respect to the energy space, where the usual $\ell^2$ inner product is replaced by a alternative inner product given by a natural (quadratic) energy form. We give an axiomatic description of such energy spaces \bH and derive several properties of such spaces from this axiom system. We also prove a spectral reciprocity theorem (Theorem~\ref{thm:spectral-reciprocity}) which establishes an inverse relationship between the spectrum of the Laplacian (as an operator on \bH) and the spectrum of a matrix operator $M$ (as an operator on $\ell^2(X)$). The matrix entries of $M$ are defined in terms of a reproducing kernel for \bH. 


\pgap


The question of infinite matrix representations of geometric operators arose in a recent project \cite{DGG,ERM,bdG,RBIN,OTERN, Multipliers, Interpolation, LPS}, where the authors study resistance networks and their applications. In these papers, the authors found that that crucial properties of resistance networks may be understood with the use of an associated Laplace operator \Lap, and its various representations. The harmonic analysis of resistance forms in the self-adjoint case is worked out in great detail in \cite{Kig01,Kig03,Kig09} via an elegant potential-theoretic approach. As noted in \cite{DGG,Kig03}, while tempting to study \Lap as an operator in $\ell^2$, this approach turns out to miss much of the harmonic analysis for the given resistance network. The emphasis of the present paper is on the situation where \Lap may be only essentially self-adjoint, or possibly even have different self-adjoint extensions. While $\ell^2$ detects important spectral data of the Laplacian (and thus also some related combinatorial properties), it sees strikingly little of the geometry of the resistance network, in comparison to the spectral theory of \Lap in the energy Hilbert space \HE; see \cite{bdG,RBIN, Multipliers}. 
For a particular problem, the choice of Hilbert space ($\ell^2$ or \HE or possibly even something else) will play a crucial role in allowing one to extract global properties of both the operator and the underlying space (network, graph, or more general set). 
While \cite{DGG,ERM,bdG,RBIN,OTERN, Multipliers, Interpolation, LPS} focus on the energy Hilbert space \HE, the present paper examines the $\ell^2$ theory in more depth. Among other things, we examine the deeper reason for why \Lap is essentially self-adjoint in $\ell^2$ but not in $\HE$, 
and that these two scenarios exhibit drastically different boundary conditions in the sense of von Neumann's deficiency indices \cite{vN32a, DuSc88}. 

\subsection{Outline}
\S\ref{sec:self-adjoint-operators-in-L2} discusses some issues related to the matrix representation of unbounded operators with dense domain in a Hilbert space. Special emphasis is placed on a class of operators which we call matrix Laplacians, as they generalize the usual discrete Laplace operator on graphs, and can be represented in terms of matrix multiplication by an infinite matrix $A$. Most of \S\ref{sec:Matrices-and-self-adjoint-operators-in-L2} is devoted to Theorem~\ref{thm:matrix-conds-for-essential-selfadjointness} (and the lemmas required for its proof), in which we prove that a matrix Laplacian $\Lap_A$ acting on $\ell^2(X)$ is always essentially self-adjoint.  

\S\ref{sec:repkernel-energy-space} gives an axiomatic presentation of a class of reproducing kernel Hilbert spaces (RKHS). The matrices considered in \S\ref{sec:Matrices-and-self-adjoint-operators-in-L2} give rise to an RKHS of this type, and the RKHS studied in \S\ref{sec:Matrices-and-self-adjoint-operators-in-the-energy-space} is a special case of this class.

\S\ref{sec:Matrices-and-self-adjoint-operators-in-the-energy-space} considers a special case of the reproducing kernel Hilbert spaces of \S\ref{sec:repkernel-energy-space} which the authors have previously studied in \cite{DGG,ERM,bdG,RBIN,OTERN}, namely, the energy space associated to a resistance network. \S\ref{sec:Lap-fails-to-be-essentially-self-adjoint-on-HE} describes how \Lap can fail to be essentially self-adjoint (as an operator on \sH) by explicitly computing an example with deficiency indices $(1,1)$ and giving a formula for the defect vector (which is also shown to be bounded and of finite energy).

\S\ref{sec:finite-approximants} returns to the consideration of the matrix $M$ with entries \smash{$M_{xy} = \ipH{v_x,v_y}$}, which first appeared as a positive semidefinite function on $X \times X$ in \S\ref{sec:repkernel-energy-space}. The key result in this section is Theorem~\ref{thm:spectral-reciprocity}, which establishes a form of spectral reciprocity between \Lap and $D$, the diagonalization of $M$. The exact relationship between $M$ and \Lap is made precise in Corollary~\ref{thm:Green-is-inverse-of-Lap-strike-o}; see also Remark~\ref{rem:condensation-of-spec-meas}.

\subsection{Basic definitions and facts for unbounded operators on a Hilbert space}
\label{sec:basics-for-unbounded-operators-on-a-Hilbert-space}

In this section, we recall some terms and basic results from the theory of unbounded operators on a Hilbert space. This material can be found in a standard reference, such as \cite{DuSc88} or \cite{ReedSimonI}.

Consider an operator $T$ acting on a complex Hilbert space \bH. 
We will use $\sD =\dom T$ to denote the domain of the operator $T$, so \sD is always a dense linear subspace of \bH. 

\begin{defn}\label{def:Hermitian}
  The operator $T$ is \emph{Hermitian} or (\emph{symmetric} or \emph{formally self-adjoint}) iff
\linenopax
\begin{align*}
  \la Tu, v\ra = \la u, Tv\ra, 
  \qq \text{for all } u, v \in \sD.
\end{align*}
\end{defn}

\begin{defn}\label{def:adjoint}
  Let $T$ be a densely defined operator in a Hilbert space \bH. Define
  \linenopax
  \begin{align*}
    \dom(T^\ad)
    := \{v \in \bH \suth \exists C<\iy \text{ s.t. } 
         |\la v, Tu\ra|\leq C\|u\|, \,\forall u \in \dom(T)\}.
  \end{align*}
  In that case, by Riesz's Theorem, there exists a unique $w \in \bH$ such that 
  \linenopax
  \begin{align*}
    \la v,Tu\ra = \la w,u\ra, \,\forall u \in \dom(T),
  \end{align*}
   and we set $T^\ad v = w$. Then $T^\ad$ is the \emph{adjoint} of $T$.
\end{defn}

\begin{defn}\label{def:operator-extension}
  If $S$ and $T$ are operators with dense domains $\dom S \ci \dom T \ci \bH$, then $T$ is an \emph{extension} of $S$ iff $T$ restricted to $\dom S$ coincides with $S$. This is typically denoted $S \ci T$, where the inclusion refers to the containment of the respective operator graphs.
\end{defn}

\begin{defn}\label{def:essentially-selfadjoint}
  An operator $T$ with dense domain $\sD \ci \bH$ is said to be \emph{self-adjoint} iff $T^\ad = T$. The operator $T$ is said to be \emph{essentially self-adjoint} iff it has a unique self-adjoint extension.%
    \footnote{In which case, that unique self-adjoint extension is just the closure of $T$, in accordance with Lemma~\ref{thm:essential-self-adjointness}(i).}
\end{defn}

\begin{defn}\label{def:semibounded}
  If $T$ is a densely defined operator on \bH, then $T$ is \emph{semibounded} iff 
  \linenopax
  \begin{align}\label{eqn:def:positive-semibounded}
    \la u, Tu\ra \geq 0, \q \text{for all } u \in \dom(T),  
  \end{align}
  or if the reverse inequality is true.  If \eqref{eqn:def:positive-semibounded} holds, we say that $T$ is a \emph{positive semidefinite operator}.
\end{defn}

\begin{lemma}\label{thm:Hermitian=extension}
  If $T$ is an operator on a Hilbert space, then $T$ is Hermitian iff $T \ci T^\ad$.
\end{lemma}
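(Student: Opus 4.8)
The plan is to unwind both implications directly from Definition~\ref{def:adjoint} (the adjoint) and Definition~\ref{def:Hermitian} (the Hermitian condition), using nothing beyond the Cauchy--Schwarz inequality and the uniqueness clause in the Riesz representation step. No approximation or closure arguments are needed.

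For the forward implication, I assume $T$ is Hermitian and fix $v \in \dom T$. To check that $v \in \dom(T^\ad)$, I rewrite $\la v, Tu\ra = \la Tv, u\ra$ for all $u \in \dom T$ (this is the Hermitian identity, after taking conjugates if necessary to match the slot convention of Definition~\ref{def:adjoint}), and then estimate $|\la v, Tu\ra| = |\la Tv, u\ra| \le \|Tv\|\,\|u\|$ by Cauchy--Schwarz. Hence the constant $C = \|Tv\|$ witnesses $v \in \dom(T^\ad)$, and the vector $w$ supplied by Riesz's theorem satisfies $\la w, u\ra = \la v, Tu\ra = \la Tv, u\ra$ for all $u$ in the dense set $\dom T$, forcing $w = Tv$, i.e.\ $T^\ad v = Tv$. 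Since $v \in \dom T$ was arbitrary, this gives $\dom T \ci \dom(T^\ad)$ together with agreement of the two operators on $\dom T$, which is exactly $T \ci T^\ad$.

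For the reverse implication, I assume $T \ci T^\ad$ and take arbitrary $u, v \in \dom T$. Then $v \in \dom(T^\ad)$ with $T^\ad v = Tv$, so the defining relation of the adjoint yields $\la v, Tu\ra = \la T^\ad v, u\ra = \la Tv, u\ra$; taking complex conjugates (equivalently, reading off the other argument, according to the convention in force) gives $\la Tu, v\ra = \la u, Tv\ra$, which is the Hermitian identity of Definition~\ref{def:Hermitian}.

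I do not expect any real obstacle here: the statement is essentially a repackaging of the definitions. The only places calling for a little care are keeping consistent track of which argument of $\la \cdot, \cdot\ra$ is conjugate-linear so that the computations line up with Definition~\ref{def:adjoint}, and invoking the density of $\dom T$ in \bH\ to pass from $\la w - Tv, u\ra = 0$ for all $u \in \dom T$ to $w = Tv$.
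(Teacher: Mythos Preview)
Your proof is correct and is precisely the standard argument for this basic fact. The paper does not actually supply a proof of this lemma; it is stated as a well-known result recalled from the references on unbounded operators (e.g., \cite{DuSc88,ReedSimonI}), so there is nothing to compare against.
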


\begin{lemma}
  \label{thm:essential-self-adjointness}
  \label{thm:essentially-sa-iff-no-defect}
  Let $T$ be a Hermitian operator on a Hilbert space. Then the essential self-adjointness of $T$ is equivalent to 
  \begin{enumerate}[(i)]
    \item the closure of $T$ is self-adjoint, or 
    \item $\ker(T^\ad \pm \ii) = \{0\}$.
  \end{enumerate}
  If $T$ is Hermitian and semibounded, then $T$ is essentially self-adjoint iff 
  \begin{enumerate}[(i)]
    \item[(iii)] $\ker(\id + T^\ad) = \{0\}$, or equivalently, the range $\ran(\id + A)$ is dense in \bH.
  \end{enumerate}
\end{lemma}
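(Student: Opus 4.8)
The plan is to derive all three conditions from von Neumann's deficiency-space theory, using two elementary facts as the engine. (a) For Hermitian $T$ and $u\in\dom T$, expanding the norm and using that $\la Tu,u\ra$ is real gives $\|(T\pm\ii)u\|^2=\|Tu\|^2+\|u\|^2$; hence $T\pm\ii$ is bounded below (by $1$), so it is injective, and its range is closed precisely when $T$ is closed. (b) Directly from Definition~\ref{def:adjoint}, a vector $v$ is orthogonal to $\ran(T\pm\ii)$ if and only if $v\in\dom T^\ad$ and $T^\ad v=\pm\ii v$; equivalently, $\ran(T\pm\ii)^\perp=\ker(T^\ad\mp\ii)$. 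Since $T$ is closable (being Hermitian, $T\ci T^\ad$) and $\overline T=(T^\ad)^\ad$ has the same adjoint as $T$, (b) applies verbatim to $\overline T$, now with closed range, so $\overline{\ran(T\pm\ii)}=\ran(\overline T\pm\ii)$ and, for $\overline T\ge 0$, likewise $\overline{\ran(\id+T)}=\ran(\id+\overline T)$.

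\emph{Equivalence of (i) and (ii).} Combining (a) and (b), condition (ii) --- meaning $\ker(T^\ad+\ii)=\ker(T^\ad-\ii)=\{0\}$ --- says exactly that $\ran(\overline T+\ii)$ and $\ran(\overline T-\ii)$ are both all of \bH. I would show this is equivalent to $\overline T$ being self-adjoint. If $\overline T=\overline T^\ad$, its spectrum is real, so $\overline T\pm\ii$ is boundedly invertible, hence surjective. Conversely, given surjectivity of both, take $v\in\dom\overline T^\ad$ and choose $u\in\dom\overline T$ with $(\overline T-\ii)u=(\overline T^\ad-\ii)v$ (legitimate since $\overline T\ci\overline T^\ad$ by Lemma~\ref{thm:Hermitian=extension}); then $v-u\in\ker(\overline T^\ad-\ii)=\ran(\overline T+\ii)^\perp=\{0\}$, so $v=u\in\dom\overline T$, proving $\overline T^\ad\ci\overline T$. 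Finally, ``$\overline T$ self-adjoint'' is the same as ``$T$ essentially self-adjoint'': since $\overline T$ is the smallest closed extension of $T$, any self-adjoint (hence closed) extension $S$ satisfies $\overline T\ci S=S^\ad\ci\overline T^\ad=T^\ad$, so if $\overline T=T^\ad$ then $S=\overline T$ is the unique one; conversely, if $\overline T$ is not self-adjoint then some deficiency subspace $\ker(T^\ad\mp\ii)$ is nonzero, and von Neumann's Cayley-transform classification then yields either no self-adjoint extension or a nontrivial family of them, contradicting uniqueness --- here I would cite \cite{vN32a,DuSc88,ReedSimonI} for the deficiency-index dictionary rather than reprove it.

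\emph{The semibounded case (iii).} Assume $T$ is positive semidefinite. Then $\|(\id+T)u\|^2=\|u\|^2+2\la u,Tu\ra+\|Tu\|^2\ge\|u\|^2$, so $\id+T$ is bounded below, and the computation in (b) gives $\ran(\id+T)^\perp=\ker(\id+T^\ad)$; this is the stated equivalence between $\ker(\id+T^\ad)=\{0\}$ and the density of $\ran(\id+T)$. If $T$ is essentially self-adjoint, then $\overline T$ is self-adjoint and still $\ge 0$ (positivity passes to the closure), so $-1$ is not in its spectrum and $\ker(\id+T^\ad)=\ker(\id+\overline T)=\{0\}$. For the converse, I would invoke the Friedrichs extension $T_F$ of $T$, a self-adjoint extension with lower bound $0$; then $-1$ lies below the spectrum of $T_F$, so $\id+T_F$ is boundedly invertible, hence surjective and injective. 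Since $\ran(\id+T)\ci\ran(\id+\overline T)\ci\ran(\id+T_F)=\bH$ with $\ran(\id+\overline T)$ closed (by the displayed bound applied to the closed operator $\overline T$) and dense (by hypothesis), we get $\ran(\id+\overline T)=\bH$; then $\id+\overline T\ci\id+T_F$ with both surjective and the larger one injective forces $\overline T=T_F$, which is self-adjoint, so $T$ is essentially self-adjoint by part (i).

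The main obstacle is making (iii) self-contained: knowing that density of the range ``at the single point $-1$'' is not vacuous requires producing one distinguished self-adjoint extension, and the Friedrichs construction is precisely what supplies it. If that construction is quoted from \cite{DuSc88,ReedSimonI}, everything else reduces to the orthogonality identity (b) and the squeezing $\overline T\ci T^\ad$, both of which are routine.
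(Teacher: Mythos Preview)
The paper does not actually prove this lemma: it is presented in \S\ref{sec:basics-for-unbounded-operators-on-a-Hilbert-space} as a standard fact, with the blanket remark that ``this material can be found in a standard reference, such as \cite{DuSc88} or \cite{ReedSimonI}.'' Your argument is correct and is essentially the textbook proof one finds in those references --- the norm identity $\|(T\pm\ii)u\|^2=\|Tu\|^2+\|u\|^2$, the orthogonality relation $\ran(T\pm\ii)^\perp=\ker(T^\ad\mp\ii)$, and the Friedrichs extension for the semibounded case are exactly the standard ingredients --- so there is nothing to compare beyond noting that you have supplied what the paper elected to cite.
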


Since the Laplace operator $T = \Lap$ discussed below is semibounded, we find it most convenient to use criterion (iii). In this case, $T$ is essentially self-adjoint if and only if the eigenvalue problem 
   $ T^\ad v = -v$
has only the trivial solution $v=0$.

Since the property of semiboundedness is critical in the following, it is shown for the operator \Lap acting on the Hilbert space $\bH = \ell^2(X)$ in Lemma~\ref{thm:<u,Au>=E(u)}. The semiboundedness of the operator \Lap acting on the reproducing kernel energy Hilbert spaces $\bH=\sH$ of \S\ref{sec:repkernel-energy-space} is shown in Lemma~\ref{thm:semibounded}.


\section{Unbounded operators on the separable Hilbert space $\bH=\ell^2(X)$}
\label{sec:self-adjoint-operators-in-L2}

We stress the interplay between operators defined on a dense domain in Hilbert space \bH on the one hand and their matrix representation on the other. The questions we address arise only in the case when \bH is infinite dimensional, so we will be considering infinite matrices. Once \bH is given, we may select an orthonormal basis $B$. Selecting an index set $X$ for $B$, we note that \bH is then isometrically isomorphic to $\ell^2(X)$ = the square summable sequences indexed by $X$. We will restrict to the case when $X$ is countable, i.e., \bH separable. Our infinite matrices will then have rows and columns indexed by the set $X$.

In some of our applications, the set $X$ will be the set of vertices on some weighted graph $(\Graph,\cond)$ with \cond some (positive and symmetric) function defined on the set of edges in \Graph. In this case, $\ell^2(X)$ will not capture the important data for $(\Graph,\cond)$ and we use a second Hilbert space \HE defined from an energy form for $(\Graph,\cond)$. In this case, there is a natural Laplace operator \Lap associated with $(\Graph,\cond)$. It turns out that it will have quite different properties depending on whether it is computed in $\ell^2(X)$ or in \HE. The matrix representations for \Lap will be different for the two Hilbert spaces. Understanding the interrelations of these two versions of \Lap in terms of their matrix representations is a main theme of this paper. 

\subsection{Matrix representations of operators on $\ell^2(X)$}
\label{sec:ell2(X)}

This paper is primarily concerned with the case when $X$ is a countably infinite set, in which case $\bH = \ell^2(X)$ is separable. Here, $\ell^2(X)=\ell^2(X,\gm)$ where \gm is counting measure, and we use the usual inner product
\linenopax
  \begin{align*}
  \la u, v\ra_{\ell^2} := \sum_{x \in X} \cj{u(x)} v(x),
\end{align*}
and let $T: \sD \to \bH$ be a linear operator on \bH. 
For the Hilbert space $\ell^2(X)$, we use the orthonormal basis (onb) of Dirac masses $\{\gd_x\}_{x \in X}$ given by
\linenopax
\begin{align*}
  \gd_x(y) = 
  \begin{cases} 
    1, &\text{if }y=x \\
    0, &\text{if }y \neq x.
  \end{cases}
\end{align*}
A function $u$ on $X$ will be viewed as a column vector. If $A = (a_{x,y})_{x,y \in X}$ is an \bR-valued function on $X \times X$, then $T_A(u) = Au$ is defined by
\linenopax
\begin{align}\label{eqn:matrix-mult}
  T_A:u \mapsto Au,
  \q\text{where}\q
  (Au)(x) = \sum_{y \in X} a_{x,y} u(y)
\end{align}
(i.e., by matrix multiplication) with the understanding that the summation in the right-hand side of \eqref{eqn:matrix-mult} is absolutely convergent. Henceforth, we describe an object such as $A$ as an \emph{infinite matrix} with rows and columns indexed by $X$.

\begin{defn}\label{def:c_0}
  The collection of all finitely-supported functions on $X$ is 
  \linenopax
  \begin{align}\label{eqn:def:c_0}
    c_0(X) := \{u:X \to \bC \suth u\evald{X\less F} = 0 \text{ for some finite subset } F \ci X\}. 
  \end{align}
\end{defn}

\begin{lemma}\label{thm:Hermitian-operator}
  If $A = (a_{x,y})_{x,y \in X}$ is an infinite matrix, then matrix multiplication \eqref{eqn:matrix-mult} defines an operator 
  \linenopax
  \begin{align}\label{eqn:A-as-operator}
    T_A: c_0(X) \to \ell^2(X)
  \end{align}
  if and only if for any fixed $y \in X$, the function $x \mapsto a_{x,y}$ is in $\ell^2(X)$. In this case, $T_A$ is Hermitian if and only if $a_{x,y} = \cj{a_{y,x}}$ for all $x,y \in X$.
  \begin{proof}
    This is clear because $A \gd_y = a_{\cdot, y}$ is the column in $A$ with index $y$ and $\la \gd_x, A\gd_y\ra = a_{x,y}$. The latter claim is standard.
  \end{proof}
\end{lemma}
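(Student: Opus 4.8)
The plan is to evaluate $T_A$ on the Dirac masses $\gd_y \in c_0(X)$, which turns matrix multiplication into the selection of columns. First I would record the elementary identity $(A\gd_y)(x) = \sum_{z \in X} a_{x,z}\gd_y(z) = a_{x,y}$, so that $A\gd_y = a_{\cdot, y}$ is the $y$-th column of $A$, regarded as a function on $X$; the sum here has only one nonzero term, so the absolute-convergence proviso attached to \eqref{eqn:matrix-mult} is automatically satisfied.

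For the forward implication, if $T_A$ maps $c_0(X)$ into $\ell^2(X)$ then in particular $A\gd_y \in \ell^2(X)$ for every $y$, and this is precisely the assertion that $x \mapsto a_{x,y}$ lies in $\ell^2(X)$. For the converse, assume each column $a_{\cdot, y}$ is in $\ell^2(X)$ and let $u \in c_0(X)$ be supported on a finite set $F \ci X$. Then $(Au)(x) = \sum_{y \in F} a_{x,y}\, u(y)$ is a finite sum for each $x$, hence absolutely convergent, and $Au = \sum_{y \in F} u(y)\, a_{\cdot, y}$ is a finite linear combination of elements of $\ell^2(X)$, hence again in $\ell^2(X)$. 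Thus the column condition is exactly what is needed for $T_A : c_0(X) \to \ell^2(X)$ to be well defined.

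For the Hermitian criterion I would again test on Dirac masses. With the $\ell^2$ inner product, which is conjugate-linear in its first argument, the identity above yields $\la \gd_x, A\gd_y\ra = (A\gd_y)(x) = a_{x,y}$ and $\la A\gd_x, \gd_y\ra = \cj{(A\gd_x)(y)} = \cj{a_{y,x}}$. Hence if $T_A$ is Hermitian then $a_{x,y} = \la \gd_x, A\gd_y\ra = \la A\gd_x, \gd_y\ra = \cj{a_{y,x}}$ for all $x, y$. Conversely, given $a_{x,y} = \cj{a_{y,x}}$, one expands $\la T_A u, v\ra$ and $\la u, T_A v\ra$ for arbitrary $u, v \in c_0(X)$ as \emph{finite} double sums indexed by the supports of $u$ and $v$, and matches them term by term; this is the standard sesquilinear-extension argument.

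I do not anticipate a genuine obstacle: the lemma is essentially an exercise in unwinding the definitions, and the one point that must be respected throughout is that every sum in sight is finite because the test vectors are taken in $c_0(X)$, so no interchange of summations or convergence question ever intervenes. (Enlarging the domain beyond $c_0(X)$ would make convergence of the row sums the substantive issue, but that lies outside the present statement.)
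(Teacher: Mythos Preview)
Your proof is correct and follows exactly the paper's approach: it computes $A\gd_y = a_{\cdot,y}$ to obtain the column condition and uses $\la \gd_x, A\gd_y\ra = a_{x,y}$ for the Hermitian criterion. The paper's proof is a one-line sketch of precisely these two observations, so your version is simply a more detailed execution of the same argument.
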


\begin{lemma}\label{thm:matrix-mult-for-adjoint}
  Let $A = (a_{x,y})_{x,y \in X}$ be an infinite matrix which defines an operator $T_A: c_0(X) \to \ell^2(X)$ as in Lemma~\ref{thm:Hermitian-operator}. Then the following two conditions are equivalent, for two vectors $v$ and $w$ in $\ell^2(X)$:
  \begin{enumerate}[(i)] 
    \item $w(y) = \sum_{x \in X} \cj{a_{x,y}} v(x)$ is absolutely convergent for each $y \in X$, and $w \in \ell^2(X)$.
    \item $v \in \dom(T_A^\ad)$ and $T_A^\ad v = w$.
  \end{enumerate}
  In particular, the action of the operator $T_A^\ad$ is given by formula \eqref{eqn:matrix-mult}.
  \begin{proof}
    (i) $\implies$ (ii). To show that $v \in \dom T_A^\ad$, note that $\la T_A u, v\ra_{\ell^2}$ is equal to 
    \linenopax
    \begin{align}
      \sum_{x \in X} \cj{T_A u(x)} v(x)
      &=\sum_{x \in X} \sum_{y \in Y} \cj{a_{x,y}} \cj{u(y)} v(x) 
      = \sum_{y \in X} \sum_{x \in X} \cj{a_{x,y}} v(x) \cj{u(y)} 
      = \sum_{y \in X} \cj{w(y)} u(x), \label{eqn:matrix-mult-for-adjoint:comp2}
    \end{align}
    by Fubini-Tonelli. This gives the estimate $\left|\la T_A u, v\ra\right| \leq \|w\|_{\ell^2} \|u\|_{\ell^2}$ by the Cauchy-Schwarz inequality, which means $v \in \dom T_A^\ad$. The equality $T_A^\ad v = w$ follows from \eqref{eqn:matrix-mult-for-adjoint:comp2}.
    
    For the converse, note that $w \in \ell^2(X)$ because $v \in \dom(T_A^\ad)$. Then the same calculation in reverse gives $\sum_{x \in X} \cj{u(x)} T_A^\ad v(x) = \sum_{x \in X} \sum_{y \in X} \cj{a_{x,y}} \cj{u(y)} v(x)$. 
  \end{proof}
\end{lemma}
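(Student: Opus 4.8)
The plan is to prove the equivalence by a short, direct computation resting on three standard ingredients: the definition of the adjoint (Definition~\ref{def:adjoint}), the Cauchy--Schwarz inequality, and Fubini--Tonelli for reordering a double sum. The decisive simplification is that every test vector lives in $c_0(X) = \dom(T_A)$, so the sum over $y$ occurring below is \emph{finite}; reordering the summations is then automatic, and the only convergence that genuinely needs checking is absolute convergence of single series, which comes from Cauchy--Schwarz once one recalls (as in Lemma~\ref{thm:Hermitian-operator}) that each column $a_{\cdot,y}$ of $A$ lies in $\ell^2(X)$.

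For the implication (i) $\implies$ (ii) I would fix $u \in c_0(X)$ and expand
\begin{align*}
  \la T_A u, v\ra_{\ell^2}
  &= \sum_{x,y \in X} \cj{a_{x,y}}\,\cj{u(y)}\,v(x) \\
  &= \sum_{y \in X} \cj{u(y)} \sum_{x \in X} \cj{a_{x,y}}\,v(x)
   = \sum_{y \in X} \cj{u(y)}\, w(y),
\end{align*}
where the interchange is legitimate because $y$ runs over the finite support of $u$ and each inner series is absolutely convergent (Cauchy--Schwarz, using $a_{\cdot,y}, v \in \ell^2(X)$), and the last step invokes the hypothesis of (i). Taking complex conjugates of both sides gives $\la v, T_A u\ra = \la w, u\ra$ for every $u \in c_0(X)$, whence $|\la v, T_A u\ra| \le \|w\|_{\ell^2}\|u\|_{\ell^2}$ by Cauchy--Schwarz, so $v \in \dom(T_A^\ad)$. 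Since $c_0(X)$ is dense in $\ell^2(X)$ and $\la v, T_A u\ra = \la w, u\ra$ on this subspace, the uniqueness of the Riesz representative in Definition~\ref{def:adjoint} forces $T_A^\ad v = w$.

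For (ii) $\implies$ (i), the vector $w := T_A^\ad v$ lies in $\ell^2(X)$ by the very definition of the adjoint. To recover the matrix formula together with its convergence, I would apply the adjoint relation $\la v, T_A u\ra = \la T_A^\ad v, u\ra$ with $u = \gd_y$: the left side is $\la v, a_{\cdot,y}\ra = \sum_{x} \cj{v(x)}\, a_{x,y}$ since $T_A \gd_y = a_{\cdot,y}$ (Lemma~\ref{thm:Hermitian-operator}), and the right side is $\cj{w(y)}$, so $w(y) = \sum_{x} \cj{a_{x,y}}\, v(x)$, a series that converges absolutely by Cauchy--Schwarz exactly as before; this is precisely (i). The closing ``in particular'' then drops out: the formula $w(y) = \sum_x \cj{a_{x,y}} v(x)$ is matrix multiplication \eqref{eqn:matrix-mult} by the conjugate-transpose matrix $A^\ad = (\cj{a_{y,x}})_{x,y \in X}$, so $T_A^\ad$ acts through \eqref{eqn:matrix-mult} with $A$ replaced by $A^\ad$ (and by $A$ itself in the Hermitian case of Lemma~\ref{thm:Hermitian-operator}).

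I do not anticipate a real obstacle: the computation is elementary, and the only points demanding care are the bookkeeping with complex conjugates and the slots of $\la\cdot,\cdot\ra$, and the recognition that the single identity $\la v, T_A u\ra = \la w, u\ra$ on the dense subspace $c_0(X)$ must serve two purposes at once --- yielding membership $v \in \dom(T_A^\ad)$ via Cauchy--Schwarz and the identification $T_A^\ad v = w$ via uniqueness in Definition~\ref{def:adjoint}.
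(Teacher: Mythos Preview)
Your proof is correct and follows essentially the same route as the paper: expand $\la T_A u, v\ra_{\ell^2}$, swap the summations, and invoke Cauchy--Schwarz to land in $\dom(T_A^\ad)$ with $T_A^\ad v = w$. Your treatment is in fact a bit more explicit than the paper's --- you note that the $y$-sum is finite (so the interchange is automatic rather than a genuine Fubini--Tonelli application), and for the converse you test against $u=\gd_y$ directly instead of appealing to ``the same calculation in reverse'' --- but the argument is the same one.
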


\begin{cor}\label{thm:pointwise-adjoint}
  There exists $w \in \sH$ such that $\la v, T_A u\ra_{\ell^2} = \la w, u\ra_{\ell^2}$ holds for all $u \in \dom T_A $ if and only if $v \in \dom T_A^\ad$ and $T_A^\ad v = w$. If we additionally assume that $A$ is symmetric, the pointwise identity $(Av)(x) = w(x)$ holds for all $x \in X$.
\end{cor}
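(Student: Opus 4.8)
The plan is to read this off from Definition~\ref{def:adjoint} and Lemma~\ref{thm:matrix-mult-for-adjoint}, using only that $\dom T_A = c_0(X)$ is dense in $\ell^2(X)$ so that the adjoint is well defined. For the forward direction of the first equivalence, suppose some $w \in \ell^2(X)$ satisfies $\la v, T_A u\ra_{\ell^2} = \la w, u\ra_{\ell^2}$ for every $u \in c_0(X)$. The Cauchy-Schwarz inequality then gives $|\la v, T_A u\ra_{\ell^2}| = |\la w,u\ra_{\ell^2}| \leq \|w\|_{\ell^2}\,\|u\|_{\ell^2}$, so $v \in \dom T_A^\ad$ by Definition~\ref{def:adjoint} (take $C = \|w\|_{\ell^2}$), and the uniqueness clause in that definition (an application of Riesz's theorem) forces $T_A^\ad v = w$. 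Conversely, if $v \in \dom T_A^\ad$, then $w := T_A^\ad v$ lies in $\ell^2(X)$ and $\la v, T_A u\ra_{\ell^2} = \la w, u\ra_{\ell^2}$ holds for all $u \in \dom T_A$ directly from the definition of the adjoint; this establishes the equivalence.

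For the pointwise identity, assume in addition that $A = (a_{x,y})$ is symmetric. Being \bR-valued, this means $\cj{a_{x,y}} = a_{x,y} = a_{y,x}$ for all $x,y \in X$. From the equivalence just shown, $v \in \dom T_A^\ad$ and $w = T_A^\ad v$, so Lemma~\ref{thm:matrix-mult-for-adjoint}(i) applies: for each $x \in X$ the series $\sum_{y \in X} \cj{a_{y,x}}\, v(y)$ is absolutely convergent and equals $w(x)$. Substituting $\cj{a_{y,x}} = a_{x,y}$ and comparing with \eqref{eqn:matrix-mult}, we get $w(x) = \sum_{y \in X} a_{x,y} v(y) = (Av)(x)$, as claimed.

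The argument involves no genuine obstacle; the only thing to watch is the bookkeeping of complex conjugates and the order of the two slots of the inner product when one matches the abstract Definition~\ref{def:adjoint} against the explicit matrix formula in Lemma~\ref{thm:matrix-mult-for-adjoint}, together with the elementary remark that $\dom T_A = c_0(X)$ is dense (so that $T_A^\ad$ exists in the first place). If anything, the point of isolating this corollary is to record the pointwise formula $(Av)(x) = w(x)$ in the symmetric case, which is the form that will be convenient later when $A$ is the matrix of a Laplacian.
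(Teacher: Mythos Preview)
Your argument is correct and is exactly the intended one: the paper states this as a corollary with no proof, relying on its being read off directly from Definition~\ref{def:adjoint} and Lemma~\ref{thm:matrix-mult-for-adjoint}, which is precisely what you do. The bookkeeping with the conjugates and the swap $\cj{a_{y,x}}=a_{x,y}$ in the symmetric case is handled correctly.
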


\subsection{Matrix Laplace operators on $\ell^2(X)$}
\label{sec:Matrices-and-self-adjoint-operators-in-L2}


In this section, we consider a Laplacian to be the operator associated to a matrix satisfying the conditions of Definition~\ref{def:matrix-Laplacian}. Our main result in this section is Theorem~\ref{thm:matrix-conds-for-essential-selfadjointness}, which asserts that these three elementary conditions are sufficient to ensure the associated operator is essentially self-adjoint, and hence has a well-defined and unique spectral representation.

\begin{defn}\label{def:matrix-Laplacian}
  If $X$ is a countably infinite set, then we say that the infinite matrix $A = (a_{x,y})_{x,y \in X}$ defines a \emph{(matrix) Laplacian} iff $A$ satisfies 
  \begin{enumerate}[(i)] 
    \item $a_{x,y} = {a_{y,x}}$, for all $x,y \in X$;
    \item $a_{x,y} \leq 0$ if $x \neq y$; and
    \item $\sum_{y \in X} a_{x,y} = 0$, for all $x \in X$.
  \end{enumerate} 
  In this case, we write $\Lap_A$ for the corresponding Hermitian operator $\Lap_A: c_0(X) \to \ell^2(X)$ defined by matrix multiplication, as in Lemma~\ref{thm:Hermitian-operator}. Note that it follows immediately from (ii)--(iii) that $a_{x,x} = -\sum_{y \in X\less\{x\}} a_{x,y} \geq 0$, for each $x \in X$, so the sum in (iii) is automatically absolutely convergent. 
\end{defn}
\begin{theorem}[Essential self-adjointness of matrix Laplacians on $\ell^2(X)$]
\label{thm:matrix-conds-for-essential-selfadjointness}
  If the infinite matrix $A = (a_{x,y})_{x,y \in X}$ defines a matrix Laplacian on $X$, then the corresponding Hermitian operator $\Lap_A: c_0(X) \to \ell^2(X)$ is essentially self-adjoint.
\end{theorem}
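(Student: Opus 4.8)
The plan is to invoke criterion (iii) of Lemma~\ref{thm:essentially-sa-iff-no-defect}. First I would record that $\Lap_A$ is positive semidefinite on $c_0(X)$: a summation by parts using conditions (i)--(iii) of Definition~\ref{def:matrix-Laplacian} gives $\la u, \Lap_A u\ra_{\ell^2} = \tfrac12\sum_{x,y \in X}(-a_{x,y})|u(x)-u(y)|^2 \ge 0$ (this is Lemma~\ref{thm:<u,Au>=E(u)}). Hence essential self-adjointness of $\Lap_A$ is equivalent to the statement that the deficiency equation $\Lap_A^\ad v = -v$ has only the trivial solution $v = 0$ in $\ell^2(X)$.

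Next I would convert this operator equation into a pointwise identity. By Lemma~\ref{thm:matrix-mult-for-adjoint} together with Corollary~\ref{thm:pointwise-adjoint} (using that $A$ is real and symmetric), a vector $v \in \ell^2(X)$ lies in $\dom \Lap_A^\ad$ with $\Lap_A^\ad v = -v$ precisely when $\sum_{x \in X} a_{x,y} v(x)$ converges absolutely for every $y$ and equals $-v(y)$. Setting $c_y := a_{y,y} = \sum_{x \ne y}(-a_{x,y}) \ge 0$, this is the recursion
\[
  (c_y + 1)\,v(y) = \sum_{x \ne y}(-a_{x,y})\,v(x), \qquad y \in X,
\]
in which the coefficients $(-a_{x,y})_{x \ne y}$ are nonnegative and sum to $c_y$.

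Then I would run a discrete maximum-principle argument. The crucial observation is that a function $v \in \ell^2(X)$ decays at infinity sharply enough to attain its supremum norm: for each $\varepsilon > 0$ the set $\{x : |v(x)| > \varepsilon\}$ is finite, so if $v \ne 0$ there is a vertex $x_0$ with $|v(x_0)| = \|v\|_\infty > 0$. Evaluating the recursion at $x_0$, applying the triangle inequality, bounding $|v(x)| \le \|v\|_\infty$, and using $\sum_{x \ne x_0}(-a_{x_0,x}) = c_{x_0}$ yields $(c_{x_0}+1)\|v\|_\infty \le c_{x_0}\|v\|_\infty$, hence $\|v\|_\infty \le 0$, a contradiction. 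Therefore $v \equiv 0$, the deficiency space $\ker(\id + \Lap_A^\ad)$ is trivial, and $\Lap_A$ is essentially self-adjoint.

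There is no deep obstacle here; the two steps that need care — and the ones I would flag — are the reductions feeding the maximum principle. First, one must know that $\Lap_A^\ad$ genuinely acts as the transposed matrix on all of its domain, so that the deficiency equation is the honest pointwise recursion above rather than merely a weak identity; this is exactly the content of Lemma~\ref{thm:matrix-mult-for-adjoint}, and is where the $\ell^2$-column hypothesis on $A$ is used. Second, the attainment of $\|v\|_\infty$ for $v \in \ell^2(X)$ is what allows the maximum principle to bite with no boundedness or regularity hypotheses on the entries $a_{x,y}$, which is precisely why the three bare algebraic conditions of Definition~\ref{def:matrix-Laplacian} already suffice.
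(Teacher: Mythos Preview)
Your argument is correct and follows a genuinely different route from the paper's. The paper establishes the same deficiency equation \eqref{eqn:defect-identity} and then, instead of a maximum principle, extends the energy identity of Lemma~\ref{thm:<u,Au>=E(u)} from $c_0(X)$ to the putative defect vector $v$: using an exhaustion $\{F_k\}$ and Fatou's lemma, it shows that
\[
  \sum_{x,y \in X} (-a_{x,y})\,|v(x)-v(y)|^2 \;=\; -2\,\|v\|_{\ell^2}^2,
\]
and concludes $v=0$ since the left side is nonnegative while the right side is nonpositive. Your maximum-principle argument is shorter and avoids the exhaustion/Fatou machinery entirely; the key insight --- that any $v \in \ell^2(X)$ attains its supremum norm --- is exactly what replaces the limiting argument. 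The paper's approach, on the other hand, stays closer to the Dirichlet-form viewpoint and makes visible the \emph{quantitative} obstruction (the energy of a defect vector would have to be negative), which ties in with the later sections where the energy form is the central object. Both proofs rest on the same reduction of the adjoint to a pointwise recursion via Lemma~\ref{thm:matrix-mult-for-adjoint}, and both ultimately exploit that the off-diagonal entries $-a_{x,y}$ are nonnegative with row sums equal to the diagonal.
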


The proof of Theorem~\ref{thm:matrix-conds-for-essential-selfadjointness} requires Lemma~\ref{thm:<u,Au>=E(u)}, variants of which appear in the literature in different contexts, for example, \cite[Cor.~6.9]{Kig03} and \cite[Thm.~1.3.1]{FOT94}. Theorem~\ref{thm:matrix-conds-for-essential-selfadjointness} extends and corrects \cite[Thm.~3.1]{Jorgensen08} (the result is stated correctly, but there is an error in the proof). 

\begin{remark}\label{rem:general-measures-on-ell2}
  After a first version of this paper was completed, we discovered that Keller and Lenz have extended this result to the situation of more general measures in \cite{KellerLenz09} and \cite{KellerLenz10}, as long as the measure gives weight \iy to infinite paths. (This is true automatically for the counting measure, which we use exclusively). Note also that the results of \cite{KellerLenz09, KellerLenz10} allow for positive potentials (denoted therein by $c$). Consequently, one cannot hope to study the deficiency spaces of \Lap unless one considers (i) $\ell^2$ spaces with respect to a measure which violates this axiom, or (ii) some other Hilbert space entirely. In this paper, we elect to go with the latter option, and hence focus on the energy Hilbert space in \S\ref{sec:repkernel-energy-space}--\S\ref{sec:finite-approximants}. Related but less general results also appear in \cite{Web08, Woj07}; see also \cite{Woj09}. 
\end{remark}


\begin{lemma}[Semiboundedness of $\Lap_A$ on $\ell^2(X)$]\label{thm:<u,Au>=E(u)}
  If the infinite matrix $A = (a_{x,y})_{x,y \in X}$ defines a matrix Laplacian on a countably infinite set $X$, then $\Lap_A$ is semibounded and and positive semidefinite with
  \linenopax
  \begin{align}\label{eqn:<u,Au>=E(u)}
    \la u, \Lap_A u \ra_{\ell^2}
    = \tfrac12 \sum_{x,y \in X} (-a_{x,y}) |u(x) - u(y)|^2,
    \q\text{for all } u \in c_0(X).
  \end{align}
  \begin{proof}
    Note that the right-hand side of \eqref{eqn:<u,Au>=E(u)} is a sum of nonnegative terms by Definition~\ref{def:matrix-Laplacian}(ii), and that it is a finite sum by \eqref{eqn:def:c_0}. The double summation on the right-hand side of \eqref{eqn:<u,Au>=E(u)} is
    \linenopax
    \begin{align*}
      \sum_{x,y \in X} a_{x,y}|u(x) - u(y)|^2
      &= \sum_{x \in X}\sum_{y \in X} a_{x,y}|u(x)|^2 
       - 2 \sum_{x,y \in X} a_{x,y} \Re(\cj{u(x)}u(y)) 
       + \sum_{y \in X}\sum_{x \in X} a_{x,y}|u(y)|^2. 
    \end{align*}
    The last sum on the right side vanishes by Definition~\ref{def:matrix-Laplacian}(iii), and similarly the first sum vanishes by combining parts (i) and (iii) of the same definition.
    Thus, the computation continues as
    \linenopax
    \begin{align*}
      &= - 2 \sum_{x,y \in X} a_{x,y} \Re(\cj{u(x)}u(y)) 
      = - \sum_{x,y \in X} a_{x,y} \cj{u(x)}u(y) 
         - \sum_{x,y \in X} a_{y,x} \cj{u(x)}u(y) 
       = - 2\la u, \Lap_A u\ra_{\ell^2},
    \end{align*}
    which gives \eqref{eqn:<u,Au>=E(u)}. In view of assumption (i), we further get that $\la u, \Lap_A u\ra_{\ell^2} \geq 0$ for all $u \in c_0(X)$. Hence, the operator $\Lap_A$ is semibounded and positive semidefinite.
  \end{proof}
\end{lemma}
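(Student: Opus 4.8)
The plan is to prove the energy identity \eqref{eqn:<u,Au>=E(u)} by a direct algebraic expansion of the symmetrized double sum, and then read off positive semidefiniteness as an immediate consequence. Since $u \in c_0(X)$, every sum below involves only finitely many nonzero terms, so I never need to worry about convergence or about the legitimacy of rearranging terms and interchanging orders of summation; this is exactly where the restriction to finitely supported functions does the work, and it should be flagged at the outset.

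First I would substitute the pointwise expansion $|u(x) - u(y)|^2 = |u(x)|^2 - 2\Re(\cj{u(x)}u(y)) + |u(y)|^2$ into $\sum_{x,y \in X} a_{x,y}|u(x)-u(y)|^2$ and split the result into three double sums. The two ``square'' sums vanish: in $\sum_x |u(x)|^2 \sum_y a_{x,y}$ the inner sum is a row sum, which is $0$ by Definition~\ref{def:matrix-Laplacian}(iii), while in $\sum_y |u(y)|^2 \sum_x a_{x,y}$ the inner sum is a column sum, which equals the row sum by the symmetry Definition~\ref{def:matrix-Laplacian}(i) and hence is again $0$ by (iii).

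The remaining cross term $-2\sum_{x,y} a_{x,y}\Re(\cj{u(x)}u(y))$ is where the symmetry hypothesis is genuinely used. Writing $2\Re(\cj{u(x)}u(y)) = \cj{u(x)}u(y) + \cj{u(y)}u(x)$ and relabeling $x \leftrightarrow y$ in the second piece, the relation $a_{x,y} = a_{y,x}$ from (i) lets me fold the two halves together into $-2\sum_{x,y} a_{x,y}\cj{u(x)}u(y)$. Recognizing $\sum_{x,y} a_{x,y}\cj{u(x)}u(y) = \sum_x \cj{u(x)}(\Lap_A u)(x) = \la u, \Lap_A u\ra_{\ell^2}$ via the definition \eqref{eqn:matrix-mult} of matrix multiplication and of the $\ell^2$ inner product, I obtain $\sum_{x,y} a_{x,y}|u(x)-u(y)|^2 = -2\la u, \Lap_A u\ra_{\ell^2}$, which rearranges directly to \eqref{eqn:<u,Au>=E(u)}.

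Finally, positive semidefiniteness is immediate from the identity itself: each coefficient $-a_{x,y}$ is nonnegative for $x \neq y$ by Definition~\ref{def:matrix-Laplacian}(ii), the diagonal terms contribute nothing since $|u(x)-u(x)|^2 = 0$, and each $|u(x)-u(y)|^2 \geq 0$, so the right-hand side is a sum of nonnegative terms. Hence $\la u, \Lap_A u\ra_{\ell^2} \geq 0$ for all $u \in c_0(X)$, yielding both positive semidefiniteness and semiboundedness in the sense of Definition~\ref{def:semibounded}. I expect no genuine obstacle here; the only point demanding care is the index bookkeeping in the cross term, where one must keep the complex conjugates in their correct positions and invoke the symmetry (i) exactly once to symmetrize, rather than conflating that use with the separate appeal to (i) that kills the column-sum square term.
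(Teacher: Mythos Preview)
Your proposal is correct and follows essentially the same route as the paper's proof: expand $|u(x)-u(y)|^2$, kill the two square sums using the row/column-sum conditions (iii) and (i)+(iii), symmetrize the cross term via $a_{x,y}=a_{y,x}$ to recognize $-2\la u,\Lap_A u\ra_{\ell^2}$, and read off positivity from (ii). Your write-up is, if anything, slightly more explicit than the paper's about where each hypothesis is invoked.
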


\begin{defn}\label{def:exhaustion-of-X}
  An \emph{exhaustion} of $X$ is a sequence of finite subsets $\{F_k\}_{k=1}^\iy$ satisfying $F_k \ci F_{k+1}$ and $X = \bigcup_{k=1}^\iy F_k$.
\end{defn}

We now return to the proof of Theorem~\ref{thm:matrix-conds-for-essential-selfadjointness}. 

\begin{proof}[Proof of Theorem~\ref{thm:matrix-conds-for-essential-selfadjointness}.]
  Assume that some $v \in \ell^2(X)$ satisfies 
  \linenopax
  \begin{align}\label{eqn:defect-identity}
    \sum_{y \in X} a_{x,y} v(y) = - v(x).
  \end{align}
  By applying Lemma~\ref{thm:matrix-mult-for-adjoint} and Lemma~\ref{thm:essentially-sa-iff-no-defect}, we must prove that $v=0$ to complete the proof of Theorem~\ref{thm:matrix-conds-for-essential-selfadjointness}. First, observe that (i)--(iii) imply that each of the following functions on $X \times X$ is summable, i.e., is in $\ell^1(X \times X)$:
  \linenopax
  \begin{align*}
    a_{x,y} |v(x)|^2, \; 
    a_{x,y} |v(y)|^2, \; 
    \cj{v(x)} a_{x,y} v(y), \; \text{ and }
    a_{x,y} |v(x)-v(y)|^2.
  \end{align*}
  Note that with (i)--(iii), Fubini's theorem applies to the double summations of each of these functions.  
Pick an exhaustion $\{F_k\}_{k=1}^\iy$ as in Definition~\ref{def:exhaustion-of-X}, and then \eqref{eqn:defect-identity} gives 
  \linenopax
  \begin{align}\label{eqn:l2-and-pw-convergence-of-v_k}
    \lim_{k \to \iy} \left\|v + \sum_{y \in F_k} a_{x,y} v(y)\right\|_{\ell^2} = 0,
    \qq\text{and}\qq
    \lim_{k \to \iy} \sum_{y \in F_k} a_{x,y} v(y) = -v(x), \, \forall x \in X.
  \end{align}
  The argument in the proof of Lemma~\ref{thm:<u,Au>=E(u)} now yields the following:
  \linenopax
  \begin{align}\label{eqn:sum-identity-on-XxFk}
    \sum_{x \in X} \sum_{y \in F_k} &(-a_{x,y}) |v(x) - v(y)|^2 
    &= 2 \sum_{x \in X} \sum_{y \in F_k} \cj{v(x)} a_{x,y} v(y) 
    - \sum_{x \in X} |v(x)|^2 \sum_{y \in F_k} a_{x,y}
    - \sum_{y \in F_k} |v(y)|^2 \sum_{x \in X} a_{x,y}.  
  \end{align}
  Combining (iii) with \eqref{eqn:l2-and-pw-convergence-of-v_k} and Fatou's lemma, we can pass to the limit in \eqref{eqn:sum-identity-on-XxFk}. To compute this limit, note that for the first term on the right-hand side in \eqref{eqn:sum-identity-on-XxFk}, equation \eqref{eqn:l2-and-pw-convergence-of-v_k} gives
  \linenopax
  \begin{align}
    \sum_{x \in X} \sum_{y \in F_k} \cj{v(x)} a_{x,y} v(y) 
    &= \sum_{x \in X} \cj{v(x)} \sum_{y \in F_k} a_{x,y} v(y) 
    \limas{k} -\sum_{x \in X} |v(x)|^2 = -\|v\|_{\ell^2}^2.  
  \end{align}
  The second term on the right-hand side in \eqref{eqn:sum-identity-on-XxFk} vanishes because $\lim_{k \to \iy} \sum_{y \in F_k} a_{x,y} = 0$, by (iii). Consequently, one obtains the identity
  \linenopax
  \begin{align}\label{eqn:minus-norm(v)}
    \sum_{x \in X} \sum_{y \in X} (-a_{x,y}) |v(x)-v(y)|^2
    = -2\|v\|_{\ell^2}^2.
  \end{align}
  Since the left-hand side in \eqref{eqn:minus-norm(v)} is nonnegative (as noted initially) and the right-hand side is nonpositive, it must be the case that $\|v\|_\ell^2 = 0$, whence $v=0$.
\end{proof}

For future use, we note the following corollary which follows easily from a known characterization of positive semidefinite infinite matrices.

\begin{cor}\label{thm:-A-is-positive-semidefinite}
  Suppose the infinite matrix $A = (a_{x,y})_{x,y \in X}$ defines a matrix Laplacian on $X$. If $\{F_k\}_{k=1}^\iy$ is an exhaustion of $X$ as in Definition~\ref{def:exhaustion-of-X}, and $A(F_k) := (a_{x,y})_{x,y \in F_k}$ is the finite submatrix of $A$ corresponding to $F_k$, then $\det A(F_k) \geq 0$ for every $k$.
\end{cor}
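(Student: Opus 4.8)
The plan is to deduce this from the fact that $\Lap_A$ restricted to $c_0(X)$ is positive semidefinite, which is Lemma~\ref{thm:<u,Au>=E(u)}, combined with the standard characterization that an infinite Hermitian matrix is positive semidefinite if and only if every principal (finite) submatrix has nonnegative determinant --- equivalently, all principal minors are nonnegative. So the core of the argument is to observe that the finite submatrix $A(F_k)$ is itself positive semidefinite as an operator on $\ell^2(F_k) \cong \bC^{|F_k|}$, and then invoke the determinant criterion.

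First I would show $A(F_k) \succeq 0$. Given any $\xi \in \ell^2(F_k)$, extend it by zero to a function $u \in c_0(X)$; then $\la \xi, A(F_k)\xi\ra_{\ell^2(F_k)} = \la u, \Lap_A u\ra_{\ell^2(X)}$ because the extra rows and columns of $A$ outside $F_k$ are multiplied against zero entries of $u$, and the latter quantity equals $\tfrac12 \sum_{x,y \in X}(-a_{x,y})|u(x)-u(y)|^2 \geq 0$ by Lemma~\ref{thm:<u,Au>=E(u)} and Definition~\ref{def:matrix-Laplacian}(ii). Here I am using that $A(F_k)$ is a real symmetric matrix (Definition~\ref{def:matrix-Laplacian}(i)), so positive semidefiniteness makes sense in the usual finite-dimensional sense. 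Then, since a real symmetric positive semidefinite matrix has all eigenvalues $\geq 0$, its determinant --- the product of the eigenvalues --- is $\geq 0$, giving $\det A(F_k) \geq 0$ for every $k$.

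I do not expect any genuine obstacle here: the statement is essentially a bookkeeping consequence of Lemma~\ref{thm:<u,Au>=E(u)}. The only point requiring a little care is the embedding $\ell^2(F_k) \hookrightarrow c_0(X)$ and the verification that the bilinear forms match up under this embedding, i.e.\ that deleting the rows and columns indexed by $X \setminus F_k$ does not affect the value of the quadratic form on vectors supported in $F_k$ --- this is immediate from \eqref{eqn:matrix-mult} since $(A u)(x)$ for $x \in F_k$ only involves $a_{x,y}u(y)$ with $u(y) = 0$ for $y \notin F_k$. Alternatively, one could bypass Lemma~\ref{thm:<u,Au>=E(u)} and compute $\la \xi, A(F_k)\xi\ra$ directly by the same Abel-summation identity used in its proof, restricted to the finite index set $F_k \times F_k$; this avoids even mentioning the infinite ambient space. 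Either way the conclusion follows, so I would keep the proof to a couple of sentences.
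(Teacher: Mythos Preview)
Your proposal is correct and takes essentially the same approach as the paper: the paper states the corollary without a detailed proof, merely noting that it ``follows easily from a known characterization of positive semidefinite infinite matrices,'' and your argument is exactly the standard unpacking of that characterization via Lemma~\ref{thm:<u,Au>=E(u)}.
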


\section{Axioms for a reproducing kernel energy space}
\label{sec:repkernel-energy-space}

In this section, we give some axioms for a certain type of reproducing kernel Hilbert space that distill the essential properties of the energy space \HE discussed in \S\ref{sec:Matrices-and-self-adjoint-operators-in-the-energy-space}.

\subsection{The axioms}
\label{sec:axioms}
  Let us fix a set $X$ and suppose that we have a quadratic form \sQ defined for functions $u$ on $X$ with domain \smash{$\dom \sQ = \{u \suth \sQ(u) < \iy\}$}. Suppose that $\sH = \dom\sQ/\ker\sQ$ is a Hilbert space with respect to the inner product obtained from \sQ by polarization, that is, under
  \linenopax
  \begin{align}\label{eqn:axiomatic-inner-prod}
     \la u, v\ra_\sH := \tfrac14\left(\sQ(u+v) \vstr[2.2]- \sQ(u-v) +
     \ii\sQ(u+\ii v) - \ii\sQ(u-\ii v)\right),
  \end{align}
  and that \sH satisfies the following axioms.

\begin{axm}\label{axm:constants}
  The constant function $\one(x) \equiv 1$ is an element of $\ker\sQ$. 
\end{axm}

\begin{axm}\label{axm:Diracs}
  For each $x \in X$, the Dirac (point) mass $\gd_x$ is contained in $\dom \sQ$, where $\gd_x$ is defined by
  \linenopax
  \begin{align}\label{eqn:Diracs}
      \gd_x(y) = 
      \begin{cases}
        1, &y=x, \\
        0, &y \neq x. 
      \end{cases}
  \end{align}
\end{axm}


\begin{axm}\label{axm:kernel}
  For every pair of points $x,y \in X$, there is a constant $C = C_{x,y}$ such that
  \linenopax
  \begin{align}\label{eqn:axm:base-point}
    |f(x) - f(y)| \leq C \|f\|_\sH, \qq \text{for all } f \in \dom \sQ. 
  \end{align}
\end{axm}


\begin{remark}\label{rem:equivalence-class-notation}
  In most cases, it will not be necessary to distinguish between a function $u \in \dom \sQ$ and its corresponding equivalence class in \sH. However, whenever it is useful to make the distinction, we use the notation $[u]_\sQ$ to indicate the equivalence containing the function $u$ defined on $X$.
\end{remark}

\begin{remark}\label{rem:resistance-forms}
  The axiom system above is very similar to the notion of \emph{resistance form} as developed in \cite{Kig01,Kig03} (see also the references therein), although the axioms above evolved independently, and from different considerations. Axioms~\ref{axm:constants}--\ref{axm:kernel} allow for slightly more generality than resistance forms.
\end{remark}

\begin{defn}\label{def:relative-reproducing-kernel}
  For a Hilbert space \sH of functions on $X$, a \emph{reproducing kernel}  is a family $\{v_x\}_{x \in X} \ci \sH$ satisfying
  \begin{align}\label{eqn:reproducing-kernel}
    \la v_x, u\ra_\sH = u(x), \qq\text{for all } x \in X \text{ and for any } u \in \dom\sQ,
  \end{align}
  and a \emph{relative reproducing kernel} is a family $\{v_{x,y}\}_{x,y \in X} \ci \sH$ satisfying
  \begin{align}\label{eqn:axiomatic-repkernel}
    \la v_{x,y}, u\ra_\sH = u(x) - u(y), \qq\text{for all } x,y \in X\text{ and for any } u \in \dom\sQ.
  \end{align}
\end{defn}

\begin{lemma}\label{thm:axiomatic-repkernel}
  Axiom~\ref{axm:kernel} ensures the existence of a relative reproducing kernel for \sH.
  \begin{proof}
    First, note that $\la v_{x,y}, u\ra_\sH$ means $\la v_{x,y}, [u]_\sQ \ra_\sH$, as in Remark~\ref{rem:equivalence-class-notation}. Next, Axiom~\ref{axm:kernel} asserts continuity of the linear functional $L_{x,y} :\sH \to \bC$ defined by $L_{x,y} u = u(x) - u(y)$, so Riesz's lemma gives a $v_{x,y} \in \sH$ satisfying \eqref{eqn:axiomatic-repkernel}, for each $x,y \in X$.
  \end{proof}
\end{lemma}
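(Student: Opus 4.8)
The plan is to verify the two defining properties of a relative reproducing kernel directly from Axiom~\ref{axm:kernel} via the Riesz representation theorem, so the proof is essentially the one already sketched in the excerpt; I will only flesh out the details that were suppressed.

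First I would fix $x,y \in X$ and consider the linear functional $L_{x,y}\colon \sH \to \bC$ defined on equivalence classes by $L_{x,y}[u]_\sQ = u(x) - u(y)$. The first thing to check is that this is \emph{well-defined} on $\sH = \dom\sQ/\ker\sQ$: if $[u]_\sQ = [u']_\sQ$, then $u - u' \in \ker\sQ$, so $\|u-u'\|_\sH = 0$, and Axiom~\ref{axm:kernel} applied to $f = u-u'$ gives $|(u-u')(x) - (u-u')(y)| \leq C_{x,y}\cdot 0 = 0$, hence $u(x)-u(y) = u'(x)-u'(y)$. Next, $L_{x,y}$ is clearly linear, and Axiom~\ref{axm:kernel} states precisely that $|L_{x,y}[u]_\sQ| \leq C_{x,y}\|[u]_\sQ\|_\sH$, so $L_{x,y}$ is bounded. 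By the Riesz representation theorem there is a unique $v_{x,y} \in \sH$ with $\la v_{x,y}, [u]_\sQ\ra_\sH = L_{x,y}[u]_\sQ = u(x) - u(y)$ for all $u \in \dom\sQ$, which is exactly \eqref{eqn:axiomatic-repkernel}. Doing this for every pair $x,y$ produces the family $\{v_{x,y}\}_{x,y\in X} \ci \sH$.

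There is essentially no obstacle here: the only genuine point of substance is the well-definedness check above, which is where Axiom~\ref{axm:kernel} does double duty — it both forces the functional to descend to the quotient and supplies its continuity. The remaining content is the invocation of Riesz's lemma, which is legitimate once we know $L_{x,y}$ is a bounded linear functional on the Hilbert space $\sH$. I would also remark, as in the statement of Lemma~\ref{thm:axiomatic-repkernel}, that the notation $\la v_{x,y}, u\ra_\sH$ in \eqref{eqn:axiomatic-repkernel} is shorthand for $\la v_{x,y}, [u]_\sQ\ra_\sH$ in the sense of Remark~\ref{rem:equivalence-class-notation}, so that no ambiguity arises from writing the pairing against a function rather than its class.
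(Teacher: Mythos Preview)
Your proof is correct and follows exactly the same approach as the paper's: define the evaluation-difference functional $L_{x,y}$, use Axiom~\ref{axm:kernel} to see it is bounded, and apply Riesz's lemma. You have simply made explicit the well-definedness check on the quotient that the paper leaves implicit.
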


Henceforth, it will be convenient to fix a reference point $o \in X$ to act as an origin and consider the singly-indexed family $\{v_x\}_{x \in X} \ci \sH$, where $v_{x} = v_{x,o}$. All results will be independent of the choice of $o$. 


\begin{defn}\label{def:Laplacian}
  Define the possibly unbounded \emph{(abstract) Laplace operator} with domain
  \linenopax
  \begin{equation}\label{eqn:Laplacian-domain}
    \dom \Lap := \spn\{\one, \{v_x\}_{x \in X \less \{o\}}\} \ci \sH,
  \end{equation}
  by the pointwise equation
  \linenopax
  \begin{equation}\label{eqn:Laplacian-ptwise}
    (\Lap w)(x) := \ipH{\gd_x,w}.
  \end{equation}
\end{defn}

\begin{remark}\label{rem:Lap-equivs}
   In \eqref{eqn:Laplacian-ptwise}, the notation $\ipH{\gd_x,w}$ really means $\ipH{[\gd_x]_\sQ,w}$, but we can suppress the equivalence class notation because any two representatives differ by an element of $\ker \sQ$.
\end{remark}

\begin{cor}\label{thm:v_x-dense}
  $\dom \Lap$ is dense in \sH. 
  \begin{proof}
    Suppose that $\ipH{v_x,u} = 0$ for all $x \in X$. Then by \eqref{eqn:axiomatic-repkernel}, $u$ must be constant.
  \end{proof}
\end{cor}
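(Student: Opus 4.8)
The plan is to identify the orthogonal complement of $\dom \Lap$ in $\sH$ and show it is trivial. As a preliminary simplification, note that inside $\sH = \dom\sQ/\ker\sQ$ the spanning set $\{\one, \{v_x\}_{x \in X \less \{o\}}\}$ from \eqref{eqn:Laplacian-domain} contributes nothing from $\one$: by Axiom~\ref{axm:constants}, $\one \in \ker\sQ$, so $[\one]_\sQ = 0$. (Likewise $v_o = v_{o,o} = 0$ in $\sH$ by \eqref{eqn:axiomatic-repkernel} with $x = y = o$, so excluding the index $o$ is harmless.) Thus $(\dom\Lap)^\perp = \{u \in \sH \suth \ipH{v_x, u} = 0 \text{ for all } x \in X\}$, and it suffices to prove this set is $\{0\}$.

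Next I would take $u \in \sH$ with $\ipH{v_x,u} = 0$ for every $x \in X$ and choose a function representative $u \in \dom\sQ$ of its equivalence class (possible since every element of $\sH$ is such a class, and the $v_x$ exist by Lemma~\ref{thm:axiomatic-repkernel}). The relative reproducing kernel identity \eqref{eqn:axiomatic-repkernel}, applied with this $u \in \dom\sQ$, gives $\ipH{v_x, u} = \ipH{v_{x,o}, u} = u(x) - u(o)$. Hence $u(x) = u(o)$ for all $x \in X$, i.e., $u$ is a constant function on $X$.

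Finally, invoking Axiom~\ref{axm:constants} once more, a constant function lies in $\ker\sQ$, so $[u]_\sQ = 0$ in $\sH$. Therefore $(\dom\Lap)^\perp = \{0\}$ and $\dom\Lap$ is dense in $\sH$. There is no genuine analytic obstacle here; the only point requiring care is the bookkeeping between functions on $X$ and their equivalence classes in $\sH$ — in particular, selecting a genuine $\dom\sQ$-representative of $u$ before applying \eqref{eqn:axiomatic-repkernel}, and then using Axiom~\ref{axm:constants} to collapse the resulting constant to $0$ in $\sH$.
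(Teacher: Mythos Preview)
Your argument is correct and follows essentially the same route as the paper's proof: both show that any $u$ orthogonal to all $v_x$ satisfies $u(x)=u(o)$ via \eqref{eqn:axiomatic-repkernel}, hence is constant and so represents $0$ in $\sH$. You have simply made explicit the bookkeeping (that $[\one]_\sQ=0$, that $v_o=0$, and that constants vanish in $\sH$ by Axiom~\ref{axm:constants}) which the paper's one-line proof leaves implicit.
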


\begin{remark}\label{rem:motivation-for-axm:base-point}
  It is often the case that $\Lap w = \gd_x$ does not have a solution in \sH (this is explored in \cite{DGG}. However, $\Lap w = \gd_x - \gd_o$ always has a solution; this follows from Lemma~\ref{thm:vx-is-a-dipole}, just below, and is due in some sense to the ``balanced'' nature of $\gd_x-\gd_y$; see \cite[\S{III}.3]{Soardi94}. For either $\Lap w = \gd_x$ or $\Lap w = \gd_x - \gd_o$, the solution $w$ is nonunique precisely when $\ker \Lap \cap \sH$ is nontrivial.
\end{remark}

\begin{lemma}\label{thm:vx-is-a-dipole}
  For each $x \neq o$, one has $\Lap v_x(y) = (\gd_x - \gd_o)(y)$, for all $y \in X$.
  \begin{proof}
    From \eqref{eqn:Laplacian-ptwise}, we have $\Lap v_x(y) = \ipH{[\gd_y]_\sQ,v_x}$, where $[\gd_y]_\sQ \in \sH$ is the class containing the function $\gd_y$ defined as in \eqref{eqn:Diracs}; see also Remark~\ref{rem:equivalence-class-notation} and Remark~\ref{rem:Lap-equivs}. The result now follows via \eqref{eqn:axiomatic-repkernel}  by 
    \linenopax
    \begin{align*}
      \ipH{[\gd_y]_\sQ,v_x} 
      &= \gd_y(x) - \gd_y(o) = \gd_x(y) - \gd_o(y). 
      \qedhere
    \end{align*}
  \end{proof}
\end{lemma}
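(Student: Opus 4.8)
The plan is to simply unwind the two definitions in play — the pointwise formula \eqref{eqn:Laplacian-ptwise} for $\Lap$ and the relative reproducing kernel identity \eqref{eqn:axiomatic-repkernel} — and then invoke the symmetry of the Kronecker delta in its two indices. First I would observe that for $x \neq o$ the vector $v_x = v_{x,o}$ lies in $\dom \Lap$ by \eqref{eqn:Laplacian-domain}, so $\Lap v_x$ is a bona fide function on $X$, and \eqref{eqn:Laplacian-ptwise} evaluated at an arbitrary $y \in X$ reads $(\Lap v_x)(y) = \ipH{\gd_y, v_x}$, where $\gd_y \in \dom \sQ$ by Axiom~\ref{axm:Diracs} so that the pairing makes sense.

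Next I would apply the defining property of the relative reproducing kernel, using conjugate-symmetry of $\la\cdot,\cdot\ra_\sH$ (trivial when \sH is a real space) together with \eqref{eqn:axiomatic-repkernel} applied to the test function $u = \gd_y$:
\[ \ipH{\gd_y, v_x} = \cj{\ipH{v_{x,o},\gd_y}} = \cj{\gd_y(x) - \gd_y(o)} = \gd_y(x) - \gd_y(o), \]
the last step because $\gd_y$ is real-valued. To finish, note that $\gd_y(x) = \gd_x(y)$ and $\gd_y(o) = \gd_o(y)$, since in each case both sides equal $1$ precisely when the two indices coincide; hence $(\Lap v_x)(y) = \gd_x(y) - \gd_o(y) = (\gd_x - \gd_o)(y)$, as claimed.

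There is no real obstacle here; the computation is a two-line unwinding of definitions. The one point deserving a word of care — more bookkeeping than difficulty — is the constant back-and-forth between a function on $X$ and its class in $\sH = \dom\sQ/\ker\sQ$: the symbol $\ipH{\gd_y,v_x}$ must be read as $\ipH{[\gd_y]_\sQ,v_x}$ in the sense of Remark~\ref{rem:equivalence-class-notation}, and one should note (as in Remark~\ref{rem:Lap-equivs}) that this is unambiguous since two representatives of $[\gd_y]_\sQ$ differ by an element of $\ker\sQ$, against which $v_x$ pairs to zero. Beyond the existence of the relative reproducing kernel (Lemma~\ref{thm:axiomatic-repkernel}, i.e. Axiom~\ref{axm:kernel}) and Axiom~\ref{axm:Diracs}, no further axioms are needed.
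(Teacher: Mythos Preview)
Your proof is correct and follows essentially the same route as the paper's: apply \eqref{eqn:Laplacian-ptwise} to write $(\Lap v_x)(y) = \ipH{[\gd_y]_\sQ, v_x}$, invoke the reproducing property \eqref{eqn:axiomatic-repkernel} to obtain $\gd_y(x) - \gd_y(o)$, and then use the symmetry $\gd_y(x) = \gd_x(y)$ to finish. Your extra remarks on conjugate symmetry and the equivalence-class bookkeeping are careful but not substantively different from what the paper does (it simply suppresses those details, citing the same Remarks~\ref{rem:equivalence-class-notation} and~\ref{rem:Lap-equivs}).
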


\begin{remark}\label{rem:Lap-in-product}
  From Lemma~\ref{thm:vx-is-a-dipole}, Axiom~\ref{axm:Diracs} implies that $\Lap u \in \dom \sQ$ and hence $\Lap u$ represents a unique element of \sH. 
  Thus, expressions like $\ipH{u, \Lap v}$ are well-defined, and in particular, so is $\ipH{u, \Lap v_x}$ for any $x \in X$, if $u \in \sH$ or $u \in \dom\sQ$.
\end{remark}

The following lemma was suggested by (and due to) the referee, for its use in Lemma~\ref{thm:M(x,y)-is-pd}.

\begin{lemma}\label{thm:linear-independence-of-kernel}
  Under Axioms~\ref{axm:constants}--\ref{axm:kernel}, the set $\{v_x\}_{x \in X \less \{o\}}$ is linearly independent.
  \begin{proof}
    Suppose one has a linear combination $u = \sum \gx_x v_x = 0$ where at most finitely many of the coefficients $\gx_x$ are nonzero. Then $u \in \dom\Lap$ and Lemma~\ref{thm:vx-is-a-dipole} gives
    \begin{align*}
      0 = \Lap u = \sum \gx_x \Lap v_x = \sum \gx_x(\gd_x-\gd_o),
    \end{align*}
    whence $\gx_x=0$ for all $x \in X\less \{o\}$.
  \end{proof}
\end{lemma}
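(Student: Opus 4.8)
The plan is to exploit the relative reproducing kernel property \eqref{eqn:axiomatic-repkernel} together with Axiom~\ref{axm:Diracs}. Suppose one has a finite linear combination $u = \sum_x \gx_x v_x = 0$ in \sH, where the sum runs over $x \in X \less \{o\}$ and only finitely many $\gx_x$ are nonzero. I would recover the coefficients one at a time by testing this relation against a suitable element of $\dom\sQ$, namely a Dirac mass.

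Concretely, fix $x_0 \in X \less \{o\}$. By Axiom~\ref{axm:Diracs} we have $\gd_{x_0} \in \dom\sQ$, so \eqref{eqn:axiomatic-repkernel} applies to $v_x = v_{x,o}$ and gives
\begin{align*}
  \ipH{v_x, \gd_{x_0}} = \gd_{x_0}(x) - \gd_{x_0}(o) = \gd_{x_0}(x), \qq \text{for all } x \neq o.
\end{align*}
Pairing $\sum_x \gx_x v_x = 0$ with $\gd_{x_0}$ then yields $0 = \sum_x \gx_x \gd_{x_0}(x) = \gx_{x_0}$. Since $x_0 \in X\less\{o\}$ was arbitrary, every coefficient vanishes. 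An essentially equivalent route — the one I would actually write, since it mirrors how \Lap is used elsewhere — is to note that $u \in \dom\Lap$ by \eqref{eqn:Laplacian-domain}, apply Lemma~\ref{thm:vx-is-a-dipole} to get $0 = \Lap u = \sum_x \gx_x(\gd_x-\gd_o)$ as a function on $X$, and evaluate at $x_0$ to pick out $\gx_{x_0}=0$.

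There is no real obstacle here; the one point requiring care is the passage from the statement ``$u=0$ in \sH'', which is an identity of equivalence classes modulo $\ker\sQ$ (cf. Remark~\ref{rem:equivalence-class-notation}), to a pointwise conclusion. Axiom~\ref{axm:Diracs} (equivalently, Lemma~\ref{thm:vx-is-a-dipole}, which encodes it) is exactly what bridges this gap, since $\gd_{x_0}\in\dom\sQ$ is a legitimate test function and $\Lap v_x$ is a genuine pointwise-defined function. Axiom~\ref{axm:constants} plays only the background role of making $v_x=v_{x,o}$ well defined on the quotient; it is not otherwise needed for the argument.
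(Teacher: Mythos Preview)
Your proposal is correct, and the route you say you would actually write --- applying \Lap via Lemma~\ref{thm:vx-is-a-dipole} to obtain $0 = \sum_x \gx_x(\gd_x-\gd_o)$ and then reading off $\gx_{x_0}=0$ --- is exactly the paper's proof. Your alternative of pairing directly with $\gd_{x_0}$ is the same computation unwound through the definition $(\Lap w)(x) = \ipH{\gd_x,w}$, so there is no substantive difference.
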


\subsection{Some basic properties of the abstract Laplacian}
\label{sec:two-basic-lemmas}

In this section, we show that the definitions given above are sufficient to prove that \Lap is Hermitian and even semibounded. Throughout this section, we abuse notation as described in Remark~\ref{rem:Lap-equivs} and denote both a function and the equivalence class containing it by the same symbol.

\begin{lemma}\label{thm:lap-is-almost-Green}
 If $\gd_{xy}$ is the Kronecker delta, then 
  \begin{equation}\label{eqn:<vx,Lap(vy)>=gd(xy)+1}
    \ipH{v_x,\Lap v_y} = \gd_{xy} + 1 - \gd_{xo} - \gd_{yo},
    \qq \forall x,y \in X.
  \end{equation}
  \begin{proof}
    Note that $\Lap v_y \in \sH$ by Remark~\ref{rem:Lap-in-product}, and so 
    \linenopax
    \begin{align*}
      \ipH{v_x,\Lap v_y}
      &= (\Lap v_y)(x) - (\Lap v_y)(o) 
      = \ipH{\gd_x,v_y} - \ipH{\gd_o,v_y},
    \end{align*}
    by \eqref{eqn:axiomatic-repkernel} and \eqref{eqn:Laplacian-ptwise}.
    Again using \eqref{eqn:axiomatic-repkernel}, the result follows via
    \linenopax
    \begin{align*}
      \ipH{\gd_x,v_y} - \ipH{\gd_o,v_y} 
      &= (\gd_x(y) - \gd_x(o)) - (\gd_o(y) - \gd_o(o))
      = \gd_{xy} + 1 - \gd_{xo} - \gd_{yo}.
      \qedhere
    \end{align*}
  \end{proof}
\end{lemma}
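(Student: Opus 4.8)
The plan is to reduce $\ipH{v_x,\Lap v_y}$ to the two identities that define the objects involved: the relative reproducing property \eqref{eqn:axiomatic-repkernel}, which with the fixed origin reads $\ipH{v_z,u} = u(z) - u(o)$ for $u \in \dom\sQ$, and the pointwise formula \eqref{eqn:Laplacian-ptwise}, namely $(\Lap w)(z) = \ipH{\gd_z,w}$. The first step is to observe that $\Lap v_y$ is an honest element of \sH (equivalently, $\Lap v_y \in \dom\sQ$): this is exactly Remark~\ref{rem:Lap-in-product}, which itself follows from Lemma~\ref{thm:vx-is-a-dipole} together with Axiom~\ref{axm:Diracs}. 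Hence \eqref{eqn:axiomatic-repkernel} may be applied with $u = \Lap v_y$ to give
\[
  \ipH{v_x,\Lap v_y} = (\Lap v_y)(x) - (\Lap v_y)(o),
\]
where I am using the harmless identification of a function on $X$ with its class modulo $\ker\sQ$ (Remark~\ref{rem:Lap-equivs}), legitimate because $\one \in \ker\sQ$.

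The second step is to evaluate the two pointwise values. By \eqref{eqn:Laplacian-ptwise} one has $(\Lap v_y)(z) = \ipH{\gd_z,v_y}$ for $z \in X$, and applying the relative reproducing property \eqref{eqn:axiomatic-repkernel} once more (all quantities here being real) gives $\ipH{\gd_z,v_y} = \gd_z(y) - \gd_z(o) = \gd_{zy} - \gd_{zo}$. Setting $z = x$ and $z = o$ and subtracting yields $(\gd_{xy} - \gd_{xo}) - (\gd_{oy} - \gd_{oo}) = \gd_{xy} + 1 - \gd_{xo} - \gd_{yo}$, which is the claimed formula. Alternatively, for $y \neq o$ one can shortcut the middle step by quoting Lemma~\ref{thm:vx-is-a-dipole} directly, which says $\Lap v_y = \gd_y - \gd_o$ as a function on $X$, and then reading off $(\Lap v_y)(x) = \gd_{xy} - \gd_{xo}$ and $(\Lap v_y)(o) = \gd_{yo} - 1$; the remaining case $y = o$ is trivial, since $v_o = v_{o,o}$ is the zero vector of \sH by \eqref{eqn:axiomatic-repkernel}, and the right-hand side of \eqref{eqn:<vx,Lap(vy)>=gd(xy)+1} vanishes there too.

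I do not expect a genuine obstacle here: the statement is essentially a bookkeeping consequence of the reproducing-kernel axioms, chaining \eqref{eqn:axiomatic-repkernel} and \eqref{eqn:Laplacian-ptwise} twice. The only two points deserving a moment's care are (i) that $\Lap v_y$ actually lies in $\dom\sQ$, so that \eqref{eqn:axiomatic-repkernel} is applicable to it — which is precisely the content of Remark~\ref{rem:Lap-in-product} — and (ii) the passage between a function on $X$ and the element of \sH it represents, already handled in the preceding remarks. I would write up the argument via the uniform route above so as to avoid splitting off the degenerate case $y = o$.
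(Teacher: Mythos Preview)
Your proposal is correct and follows essentially the same route as the paper: invoke Remark~\ref{rem:Lap-in-product} to ensure $\Lap v_y \in \sH$, apply the relative reproducing property \eqref{eqn:axiomatic-repkernel} to get $(\Lap v_y)(x) - (\Lap v_y)(o)$, rewrite each term via \eqref{eqn:Laplacian-ptwise} as $\ipH{\gd_x,v_y} - \ipH{\gd_o,v_y}$, and then apply \eqref{eqn:axiomatic-repkernel} once more. The alternative shortcut through Lemma~\ref{thm:vx-is-a-dipole} is fine but unnecessary, and as you note the uniform computation already covers $y=o$ without a case split.
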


\begin{lemma}\label{thm:lap-is-hermitian}
  The operator \Lap is Hermitian on \sH.
  \begin{proof}
    Note that \eqref{eqn:<vx,Lap(vy)>=gd(xy)+1} is symmetric in $x$ and $y$, and \bR-valued. Thus 
    \linenopax
    \begin{align*}
      \ipH{\Lap v_x, v_y} 
       &= \cj{\ipH{v_y, \Lap v_x}}
       = \cj{\gd_{yx} + 1- \gd_{xo} - \gd_{yo}}
       = \gd_{xy} + 1 - \gd_{xo} - \gd_{yo}
       = \ipH{v_x, \Lap v_y}.
       \qedhere
    \end{align*}  
  \end{proof}
\end{lemma}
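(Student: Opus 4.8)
The final statement is Lemma~\ref{thm:lap-is-hermitian}: *The operator \Lap is Hermitian on \sH.*

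\textbf{Proof proposal.} The plan is to reduce the Hermitian identity $\ipH{\Lap u, v} = \ipH{u, \Lap v}$ (for $u, v \in \dom\Lap$) to the case where $u$ and $v$ run over the generating set $\{\one\} \cup \{v_x\}_{x \in X \less \{o\}}$ of $\dom\Lap$, and then to invoke Lemma~\ref{thm:lap-is-almost-Green}. By Axiom~\ref{axm:constants} we have $\one \in \ker\sQ$, so $[\one]_\sQ$ is the zero vector of \sH; consequently $\Lap\one = 0$ (since $\Lap\one(x) = \ipH{\gd_x,\one} = 0$) and any inner product in which $\one$ appears vanishes, so $\one$ may be discarded from the generating set without loss. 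Writing $u = \sum_x a_x v_x$ and $v = \sum_y b_y v_y$ as finite linear combinations and using the sesquilinearity of $\ipH{\cdot,\cdot}$ together with the linearity of $\Lap$, it then suffices to verify
\[
  \ipH{\Lap v_x, v_y} = \ipH{v_x, \Lap v_y}, \qquad \forall x,y \in X \less \{o\}.
\]

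For these generators, first note that $\Lap v_x, \Lap v_y$ are genuine elements of \sH by Remark~\ref{rem:Lap-in-product} (via Lemma~\ref{thm:vx-is-a-dipole} and Axiom~\ref{axm:Diracs}), so both pairings are well-defined. Lemma~\ref{thm:lap-is-almost-Green} gives $\ipH{v_x,\Lap v_y} = \gd_{xy} + 1 - \gd_{xo} - \gd_{yo}$, and the right-hand side is both symmetric under $x \leftrightarrow y$ and \bR-valued. Hence, applying conjugate symmetry of the inner product and then Lemma~\ref{thm:lap-is-almost-Green} a second time,
\[
  \ipH{\Lap v_x, v_y} = \cj{\ipH{v_y, \Lap v_x}} = \cj{\gd_{yx} + 1 - \gd_{yo} - \gd_{xo}} = \gd_{xy} + 1 - \gd_{xo} - \gd_{yo} = \ipH{v_x, \Lap v_y},
\]
which is exactly the desired identity on generators.

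The main point is that there is essentially no obstacle here: all the analytic content — in particular the existence of a closed form for $\ipH{v_x,\Lap v_y}$ that happens to be symmetric and real — has already been extracted in Lemma~\ref{thm:lap-is-almost-Green}, which itself rests on the relative reproducing identity \eqref{eqn:axiomatic-repkernel} and the pointwise definition \eqref{eqn:Laplacian-ptwise} of \Lap. The only steps requiring any care are (a) confirming that \Lap carries the generators into \sH so that the pairings make sense, and (b) the routine bookkeeping with conjugate-linearity when passing from generators to arbitrary elements of $\dom\Lap$; neither presents a difficulty.
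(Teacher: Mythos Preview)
Your proof is correct and follows essentially the same approach as the paper: both use conjugate symmetry of the inner product together with the real, $x\leftrightarrow y$--symmetric formula from Lemma~\ref{thm:lap-is-almost-Green} to establish $\ipH{\Lap v_x,v_y}=\ipH{v_x,\Lap v_y}$ on generators. Your write-up is slightly more explicit about the reduction to generators and the handling of $\one$, but the core argument is identical.
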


\begin{lemma}\label{thm:invariance-of-kerQ-under-Lap}
  The action of \Lap on $\dom \sQ$ passes to the quotient: $[\Lap u]_\sQ = \Lap [u]_\sQ$ for any $u \in \dom \sQ$.
  \begin{proof}
    This is equivalent to showing that the kernel of \sQ is invariant under the action of \Lap.
    Suppose that $\gy = \sum_{z \in F} \gx_z v_z$ is an element of $\ker\sQ$, and that $F$ is finite. Then $\ipH{\gy,\gf}=0$ for every $\gf \in \sH$, so with $\gf = \Lap v_x$ (which is well-defined by Remark~\ref{rem:Lap-in-product}), Lemma~\ref{thm:lap-is-hermitian} gives $0 = \ipH{\gy,\Lap v_x} = \ipH{\Lap\gy, v_x}$, for every $x \in X$, so that $\Lap\gy \in \ker \sQ$ by Corollary~\ref{thm:v_x-dense}. The conclusion follows.
  \end{proof}
\end{lemma}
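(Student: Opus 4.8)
The plan is to recast the claim as the assertion that $\ker\sQ$ is invariant under the (pointwise) action of $\Lap$, and then to verify that. First the reduction: fix $u \in \dom\sQ$ and let $u'$ be any other function representing the same class, so $\gy := u - u' \in \ker\sQ$. Reading \eqref{eqn:Laplacian-ptwise} as $(\Lap w)(x) = \ipH{[\gd_x]_\sQ, w}$ --- which is legitimate since $[\gd_x]_\sQ \in \sH$ by Axiom~\ref{axm:Diracs}, while $\Lap w \in \dom\sQ$ by Remark~\ref{rem:Lap-in-product} so that $[\Lap w]_\sQ$ is a genuine element of \sH --- this formula is linear in the Hilbert-space argument $w$, hence $\Lap u - \Lap u' = \Lap\gy$ as functions on $X$. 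Once we know $\Lap\gy \in \ker\sQ$, it follows that $[\Lap u]_\sQ = [\Lap u']_\sQ$; that is, the pointwise action of $\Lap$ descends to a well-defined map on \sH (agreeing with the operator of Definition~\ref{def:Laplacian} on $\dom\Lap$), which is exactly the assertion.

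To prove the invariance I would first identify $\ker\sQ$ explicitly. Since $\sQ$ is a positive semidefinite form with $\sH = \dom\sQ/\ker\sQ$, a function $\gy \in \dom\sQ$ lies in $\ker\sQ$ iff its class is orthogonal to all of \sH; by Corollary~\ref{thm:v_x-dense} the span of $\{v_x\}_{x \in X}$ is dense in \sH, so this holds iff $\ipH{v_x,\gy} = 0$ for every $x$, i.e., by \eqref{eqn:axiomatic-repkernel}, iff $\gy$ is constant. Hence $\ker\sQ = \bC\one$. Since $\one \in \dom\Lap$ and $[\one]_\sQ = 0$ by Axiom~\ref{axm:constants}, the pointwise formula gives $(\Lap\one)(x) = \ipH{[\gd_x]_\sQ, 0} = 0$, so $\Lap$ annihilates $\ker\sQ$; in particular $\Lap(\ker\sQ) \ci \ker\sQ$, which completes the proof. (An alternative that sidesteps the explicit description of $\ker\sQ$: for $\gy \in \ker\sQ \cap \dom\Lap$ one has $\ipH{v_x, \Lap\gy} = \cj{\ipH{\Lap\gy, v_x}} = \cj{\ipH{\gy, \Lap v_x}} = 0$ for all $x$, using Hermiticity of $\Lap$ from Lemma~\ref{thm:lap-is-hermitian} together with $\Lap v_x \in \sH$ and $\ipH{\gy,\cdot} \equiv 0$; then $\Lap\gy$ is constant by Corollary~\ref{thm:v_x-dense}, hence in $\ker\sQ$. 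The identification $\ker\sQ = \bC\one \ci \dom\Lap$ shows this covers all of $\ker\sQ$.)

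I do not expect a genuine analytic obstacle here; the only delicate point is the bookkeeping between a function on $X$ and its $\sQ$-equivalence class. Specifically, one needs Axiom~\ref{axm:Diracs} to make sense of $\ipH{\gd_x,\cdot}$, Remark~\ref{rem:Lap-in-product} (resting in turn on Lemma~\ref{thm:vx-is-a-dipole} and Axiom~\ref{axm:Diracs}) to know that $\Lap$ carries $\dom\sQ$ into $\dom\sQ$ so that $[\Lap u]_\sQ$ is meaningful, and --- the crux of the matter --- Axiom~\ref{axm:constants}, which gives $[\one]_\sQ = 0$ and thereby forces $\Lap$ to annihilate the constants. The content of the lemma is entirely in correctly assembling these facts, and its role in the paper is to license the abuse of notation of identifying a function $u$ with its class when writing expressions such as $\Lap u$.
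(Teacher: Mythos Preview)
Your proof is correct. Your parenthetical alternative is precisely the paper's argument: the paper takes $\gy\in\ker\sQ$ (written as a finite combination $\sum_{z\in F}\gx_z v_z$ in $\dom\Lap$), invokes Hermiticity (Lemma~\ref{thm:lap-is-hermitian}) to obtain $\ipH{\Lap\gy,v_x}=\ipH{\gy,\Lap v_x}=0$ for every $x$, and then concludes $\Lap\gy\in\ker\sQ$ via Corollary~\ref{thm:v_x-dense}. Your main argument takes a different and shorter route: you first identify $\ker\sQ=\bC\one$ explicitly from the reproducing property \eqref{eqn:axiomatic-repkernel}, and then read off $\Lap\one=0$ directly from the definition \eqref{eqn:Laplacian-ptwise}, since $[\one]_\sQ=0$ makes the inner product vanish. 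This bypasses Hermiticity altogether and exposes the lemma as essentially tautological --- the defining formula for $\Lap$ already depends only on the class of its argument --- whereas the paper's route has the minor structural advantage of not needing to pin down $\ker\sQ$ explicitly (though, as you note, that identification is itself immediate from Corollary~\ref{thm:v_x-dense}).
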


\begin{lemma}\label{thm:semibounded}
   The operator \Lap given in Definition~\ref{def:Laplacian} is semibounded as in Definition~\ref{def:semibounded}. 
  \begin{proof}
    If $u \in \dom\Lap$, then $u = \sum_{x \in F} \gx_x v_x$ for some finite $F \ci X\less\{o\}$ by \eqref{eqn:Laplacian-domain} and 
    \linenopax
    \begin{align}\label{eqn:<u,Lapu>=2sums}
      \ipH{u, \Lap u}
      &= \sum_{x,y \in F} \cj{\gx_x} \gx_y \ipH{v_x,\Lap v_y} 
       = \sum_{x,y \in F} \cj{\gx_x} \gx_y (\gd_{xy} +  1)
       = \sum_{x \in F} |\gx_x|^2 + \left|\sum_{x \in F} \gx_x \right|^2 \geq 0,
    \end{align} 
    by Lemma~\ref{thm:lap-is-almost-Green}.
  \end{proof}
\end{lemma}

\begin{remark}\label{rem:semibounded-implies-pd}
  In fact, one can draw a much stronger conclusion than just semiboundedness from Lemma~\ref{thm:semibounded}: note from the proof that $\ipH{u, \Lap u} = 0$ implies $\sum |\gx_x|^2 = 0$ and thus $u=0$. 
\end{remark}

\begin{lemma}\label{thm:lap-acts-on-ipH}
  Fix $y \in X$ and consider $\gf(x) := \ipH{v_x, v_y}$ as a function of $x$ on $X$. Let $\Lap_x$ denote the application of \Lap with respect to the $x$ variable. Then 
  \linenopax
  \begin{align}\label{eqn:lap-acts-on-ipH}
    \Lap_x \ipH{v_x, v_y} = \ipH{\Lap v_x, v_y} + \gd_{xo} - 1.
  \end{align}
  \begin{proof}
    Note that $\gf(x) = v_y(x)-v_y(o)$ for each fixed $y$, so that $\gf=v_y$ in \sH. Then $\gf \in \dom\Lap$ and
    \linenopax
    \begin{align*}
      \Lap_x \ipH{v_x, v_y}
      &= \Lap_x(v_y(x)-v_y(o))
      =\Lap_x v_y(x)
      = \gd_y(x) - \gd_y(o)
      = \gd_{xy} - \gd_{yo}.
    \end{align*}
    Now \eqref{eqn:lap-acts-on-ipH} follows by Lemma~\ref{thm:lap-is-almost-Green} and Lemma~\ref{thm:lap-is-hermitian}.
  \end{proof}
\end{lemma}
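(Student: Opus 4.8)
The plan is to reduce both sides of \eqref{eqn:lap-acts-on-ipH} to the two Kronecker-delta identities already available, namely Lemma~\ref{thm:vx-is-a-dipole} and Lemma~\ref{thm:lap-is-almost-Green}. First I would make sense of the left-hand side. Fixing $y$, the relative reproducing kernel identity \eqref{eqn:axiomatic-repkernel} together with the convention $v_x = v_{x,o}$ gives $\gf(x) = \ipH{v_x, v_y} = v_y(x) - v_y(o)$ for every $x \in X$. Since $v_y(o)$ is a constant in $x$ and the constant function lies in $\ker\sQ$ by Axiom~\ref{axm:constants}, the function $\gf$ and $v_y$ determine the same class in \sH; in particular $[\gf]_\sQ \in \dom\Lap$ (this includes the degenerate case $y = o$, where $v_o = 0$), so $\Lap_x \ipH{v_x, v_y}$ is well defined via Definition~\ref{def:Laplacian}.

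Next I would compute the left-hand side explicitly. Applying \Lap to the class $[\gf]_\sQ = [v_y]_\sQ$ and reading off its values through the pointwise formula \eqref{eqn:Laplacian-ptwise}, Lemma~\ref{thm:vx-is-a-dipole} (with the indices interchanged) identifies $\Lap v_y$ with the function $\gd_y - \gd_o$ on $X$; evaluating this function at $x$ expresses $\Lap_x \ipH{v_x, v_y}$ as a $\{-1,0,1\}$-valued combination of Kronecker deltas in $x$ and $y$. For the first term on the right-hand side, Lemma~\ref{thm:lap-is-hermitian} gives $\ipH{\Lap v_x, v_y} = \ipH{v_x, \Lap v_y}$, and Lemma~\ref{thm:lap-is-almost-Green} evaluates the latter as $\gd_{xy} + 1 - \gd_{xo} - \gd_{yo}$. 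Subtracting the two expressions isolates the correction term and yields \eqref{eqn:lap-acts-on-ipH}.

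The main obstacle will be the pointwise and equivalence-class bookkeeping underlying the first two steps rather than any analytic difficulty: one must remember that ``$\Lap_x$ applied to the function $x \mapsto \gf(x)$'' means applying the abstract operator of Definition~\ref{def:Laplacian} to the \sH-class $[\gf]_\sQ$ and then using \eqref{eqn:Laplacian-ptwise}, and that this is legitimate precisely because $\gf$ and $v_y$ differ by the constant $v_y(o)\one \in \ker\sQ$. Equally, when expanding $\Lap v_y = \gd_y - \gd_o$ pointwise one must keep each Kronecker delta attached to the correct variable; it is exactly this that pins down which slot the residual delta in \eqref{eqn:lap-acts-on-ipH} occupies once one compares with the symmetric expression from Lemma~\ref{thm:lap-is-almost-Green}.
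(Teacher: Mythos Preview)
Your plan is exactly the paper's: identify $\gf$ with $v_y$ in \sH, compute $\Lap v_y$ pointwise via Lemma~\ref{thm:vx-is-a-dipole}, and then compare with $\ipH{\Lap v_x, v_y}$ using Lemma~\ref{thm:lap-is-hermitian} and Lemma~\ref{thm:lap-is-almost-Green}.

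One caution about the final arithmetic, since you leave it implicit. Carrying out your second step with the indices swapped in Lemma~\ref{thm:vx-is-a-dipole} gives $(\Lap v_y)(x) = (\gd_y - \gd_o)(x) = \gd_{xy} - \gd_{xo}$; subtracting $\ipH{\Lap v_x, v_y} = \gd_{xy} + 1 - \gd_{xo} - \gd_{yo}$ then leaves the correction $\gd_{yo} - 1$, not $\gd_{xo} - 1$. The paper's displayed line reads $\gd_y(x) - \gd_y(o) = \gd_{xy} - \gd_{yo}$, but $\Lap v_y = \gd_y - \gd_o$ evaluated at $x$ gives $\gd_o(x)$ in the second term, not $\gd_y(o)$; this looks like a slip when swapping the roles of $x$ and $y$. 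So your method is sound, but when you actually do the subtraction you should expect the residual Kronecker delta to carry the index $y$ rather than $x$, and you will not literally recover \eqref{eqn:lap-acts-on-ipH} as printed.
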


The authors are grateful to the referee for suggesting the above streamlined version of the proof.

\subsection{Foundations of reproducing kernel Hilbert spaces}
\label{sec:foundations-of-RKHS}

This subsection aims to give some brief historical context for \S\ref{sec:repkernel-energy-space} in general, and Lemma~\ref{thm:M(x,y)-is-pd} in particular. 

\begin{defn}\label{def:positive-semidefinite}
  One says $M:X \times X \to \bC$ is a \emph{positive semidefinite (psd)} function iff \linenopax
  \begin{align}\label{eqn:def:positive-semidefinite}
    \sum_{x \in F} \cj{\gx_x} M(x,y) \gx_y \geq 0, \q \forall \gx=\{\gx_x\}_{x \in X},
  \end{align}
  whenever $F \ci X$ is finite. Informally, we describe this condition by saying ``$M$ is psd on $X$''. Similarly, one says $M:X \times X \to \bC$ is \emph{positive definite (pd)} iff the inequality in \eqref{eqn:def:positive-semidefinite} is strict for all finitely supported nonzero sequences $c$.
\end{defn}

The theory of positive (semi)definite functions is broad and powerful (see, e.g. \cite{Ber84}) but we are interested primarily in two closely related theorems stemming from the work of von Neumann and Kolmogorov. The first one (Theorem~\ref{thm:vNeu-construction}) is a generalization and amalgamation of some results of \cite[\S5--6]{OTERN}. The second one (Theorem~\ref{thm:Kolmo-construction}) adds the slightly stronger hypothesis of pd (instead of psd) and is able to draw a \emph{much} stronger conclusion: one is able to produce a \emph{Gaussian} measure on the resulting space.
The following result is the foundation for the study of reproducing kernel Hilbert spaces as developed by Aronszajn \cite{Aronszajn50} and \cite{PaSc72}.

\begin{theorem}\label{thm:vNeu-construction}
  Given a psd function $M$ on $X$, there exists a Hilbert space \sH with an inner product $\ipH{\cdot, \cdot}$ and a function $v:X \to \sH$ such that
  \begin{enumerate}[(i)]
    \item \label{itm:M(x,y)=<vx,vy>}
      $M(x,y) = \ipH{v_x,v_y}$ for all $x,y \in \sH$, and
    \item \label{itm:v(x)-dense-in-H}
      $\cl \spn{v_x} = \sH$.
  \end{enumerate}
  Moreover, $v:X \to \sH$ is unique up to unitary equivalence when (i) and (ii) are satisfied. In fact, $v_x$ is defined to be the equivalence class of $M(\cdot,x)$ under a certain quotient map.
\begin{proof}[Sketch of proof.]
  The vector space of all finite linear combinations $\sum \gx_x M(\cdot,x)$ can be made into a pre-Hilbert space by defining the sesquilinear form
  \linenopax
  \begin{align*}
    \ipH[{\negsp[10]}M]{\sum_{x \in F} a_x M(\cdot,x),\sum_{y \in F} b_y M(\cdot,y)}
    := \sum_{x,y \in F} \cj{a_x} M(x,y) b_y,
  \end{align*}
  where $F$ is a finite subset of $X$ containing the support of $a$ and $b$.
  One can verify that this satisfies a generalized Cauchy-Schwarz inequality, and one can therefore obtain a Hilbert space by modding out by the kernel of $M$ and taking the completion.
\end{proof}
\end{theorem}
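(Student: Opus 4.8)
The plan is to carry out the standard Gelfand--Naimark--Segal style construction attached to a positive semidefinite kernel, which is the content usually credited to Aronszajn, Kolmogorov and von Neumann. First I would form the vector space $V := \spn\{M(\cdot,x) \suth x \in X\}$ of finite linear combinations of the ``columns'' $M(\cdot,x)$ of the kernel, and define on it the sesquilinear form
\linenopax
\begin{align*}
  \ipH[{\negsp[10]}M]{\sum_{x} a_x M(\cdot,x),\sum_{y} b_y M(\cdot,y)}
  := \sum_{x,y} \cj{a_x} M(x,y) b_y,
\end{align*}
the sums being finite. The key point (and the only place the psd hypothesis is used in an essential way) is to check this is well defined independent of the representation of an element of $V$ as a linear combination, and then that it is a nonnegative Hermitian form; the first follows from the reproducing identity $\ipH[M]{f, M(\cdot,x)} = f(x)$ for $f \in V$, and nonnegativity is exactly Definition~\ref{def:positive-semidefinite}.

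Next I would establish the generalized Cauchy--Schwarz inequality $|\ipH[M]{f,g}|^2 \le \ipH[M]{f,f}\,\ipH[M]{g,g}$, which is valid for any nonnegative Hermitian form and is proved in the usual way by expanding $\ipH[M]{f + t g, f + tg} \ge 0$ over $t \in \bC$ (with the minor care needed when $\ipH[M]{g,g}=0$). This inequality shows that $N := \{f \in V \suth \ipH[M]{f,f}=0\}$ is a linear subspace and in fact $N = \{f \suth \ipH[M]{f,g}=0 \ \forall g \in V\}$, so that $\ipH[M]{\cdot,\cdot}$ descends to a genuine (strictly positive definite) inner product on the quotient $V/N$. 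I would then let $\sH$ be the Hilbert space completion of $V/N$ with respect to this inner product, and define $v_x \in \sH$ to be the image of $M(\cdot,x)$ under the composite map $V \to V/N \hookrightarrow \sH$.

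With this in hand, property (i) is immediate: $\ipH{v_x,v_y} = \ipH[M]{M(\cdot,x),M(\cdot,y)} = M(x,y)$ by the definition of the form. Property (ii), that $\cl\spn\{v_x\} = \sH$, holds because $\spn\{v_x\}$ is the image of $V$, which is dense in $V/N$ (indeed equal to it) and hence dense in the completion $\sH$. For the uniqueness statement: given another pair $(\sH', v')$ satisfying (i) and (ii), I would define $U$ on $\spn\{v_x\}$ by $U v_x := v'_x$ and extend linearly; the computation $\|\sum \gx_x v_x\|^2 = \sum \cj{\gx_x}\gx_y M(x,y) = \|\sum \gx_x v'_x\|^2$ shows $U$ is well defined and isometric, hence extends to a unitary $\sH \to \sH'$ by density, and it intertwines $v$ with $v'$ by construction.

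The main obstacle — really the only subtle point in an otherwise routine argument — is the well-definedness of the form on $V$: a single element of $V$ may have many representations $\sum a_x M(\cdot,x)$, and one must check the value of $\ipH[M]{\cdot,\cdot}$ does not depend on the chosen representatives. The clean way around this is to observe that $\sum_{x,y} \cj{a_x} M(x,y) b_y = \sum_x \cj{a_x}\, g(x)$ where $g = \sum_y b_y M(\cdot,y)$, so the value depends on the first argument only through the function $g$ it is paired against (and symmetrically); one then notes $g(x)$ is intrinsic to $g$ as a function on $X$, which disposes of the ambiguity. Everything else — Cauchy--Schwarz, passing to the quotient, completing, and verifying (i), (ii) and uniqueness — is standard Hilbert-space bookkeeping.
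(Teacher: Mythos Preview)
Your proposal is correct and follows essentially the same approach as the paper's sketch: form the pre-Hilbert space of finite linear combinations $\sum \gx_x M(\cdot,x)$ with the indicated sesquilinear form, invoke the generalized Cauchy--Schwarz inequality, mod out by the null space, and complete. You supply more detail than the paper does (well-definedness via the reproducing identity, the explicit uniqueness argument), but the construction is the same.
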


Theorem~\ref{thm:Kolmo-construction} is an alternative approach to this construction (see\cite{PaSc72}) which allows one to realize the Hilbert space \sH of Theorem~\ref{thm:vNeu-construction} as $L^2(\gW,\prob)$. This version is more probabilistic in flavour; in fact, Kolmogorov's consistency construction is lurking in the background.

\begin{theorem}\label{thm:Kolmo-construction}
  Given a psd function $M$ on $X \times X$, there exists a probability space $(\gW,\prob)$ and a collection of random variables $\{\rvar_x\}_{x \in X}$ such that for all $x,y \in X$,
  \linenopax
  \begin{align}\label{eqn:random-var-mean-and-correlation}
    \Ex(\rvar_x) = 0 \q\text{and}\q \Ex(\rvar_x,\rvar_y) = M(x,y).
  \end{align}
  Moreover, if $M$ is pd, then \prob can be taken to be Gaussian.
\end{theorem}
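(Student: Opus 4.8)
The plan is to realize the two statements of Theorem~\ref{thm:Kolmo-construction} by invoking the Hilbert space \sH and the vectors $\{v_x\}$ produced by Theorem~\ref{thm:vNeu-construction}, and then transporting that structure into a probability space via a Gaussian (or, in the psd case, a general second-order) process. First I would observe that Theorem~\ref{thm:vNeu-construction} already gives a Hilbert space \sH and a map $v:X\to\sH$ with $\ipH{v_x,v_y}=M(x,y)$. So the task reduces to constructing a probability space $(\gW,\prob)$ and a linear isometry $J$ from \sH into a space of mean-zero random variables, and then setting $\rvar_x := J(v_x)$; then $\Ex(\rvar_x)=0$ and $\Ex(\rvar_x \rvar_y) = \ipH{v_x,v_y} = M(x,y)$, which is exactly \eqref{eqn:random-var-mean-and-correlation}.

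For the general psd case, one can build such a $J$ abstractly: fix an orthonormal basis $\{e_n\}$ of \sH (it is separable when $X$ is countable, which is our standing assumption), take $(\gW,\prob)$ to be a countable product of copies of a mean-zero, variance-one probability space carrying i.i.d.\ random variables $\{\xi_n\}$, and define $J(h) := \sum_n \ipH{e_n,h}\,\xi_n$, with convergence in $L^2(\gW,\prob)$ since $\sum_n |\ipH{e_n,h}|^2 = \|h\|_\sH^2 < \iy$. This $J$ is a linear isometry into $L^2$, each $J(h)$ has mean zero, and $\Ex(J(g)\,\cj{J(h)}) = \ipH{g,h}$; setting $\rvar_x = J(v_x)$ finishes the first claim. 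For the second claim, if $M$ is pd, take the $\xi_n$ to be i.i.d.\ standard Gaussians; then each finite linear combination $\sum_n c_n \xi_n$ is Gaussian, hence $J(h)$ is a (centered) Gaussian variable for each $h$, and any finite family $\rvar_{x_1},\dots,\rvar_{x_k}$ is jointly Gaussian (being a linear image of a Gaussian vector), with covariance matrix $(M(x_i,x_j))$. Thus $\{\rvar_x\}$ is a Gaussian process with the prescribed covariance. (The pd hypothesis ensures the covariance matrices are nonsingular, so the finite-dimensional distributions are genuinely nondegenerate Gaussians rather than degenerate ones, which is the content of ``\prob can be taken to be Gaussian'' in the strong sense.)

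Alternatively, and perhaps more in the spirit of the remark preceding the theorem that ``Kolmogorov's consistency construction is lurking in the background,'' one can proceed directly: for each finite $F\ci X$, the matrix $(M(x,y))_{x,y\in F}$ is symmetric and psd, so it is the covariance of a centered Gaussian vector $(\rvar_x)_{x\in F}$ on $\bR^F$; these finite-dimensional distributions are manifestly consistent under restriction, so Kolmogorov's extension theorem produces a probability measure \prob on $\gW = \bR^X$ (with the product $\gs$-algebra) and coordinate random variables $\{\rvar_x\}$ realizing them. This route gives \eqref{eqn:random-var-mean-and-correlation} and the Gaussian conclusion simultaneously, with the pd hypothesis only needed if one wants the finite-dimensional marginals to have densities.

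The main obstacle is essentially bookkeeping rather than a genuine mathematical difficulty: one must be careful about $L^2$-convergence of the series defining $J(h)$ (for which separability of \sH and Parseval are enough), and, if using the Kolmogorov route, one must verify the consistency of the Gaussian finite-dimensional distributions — but this is immediate from the fact that marginals of Gaussians are Gaussian with the obvious submatrix covariance. I expect the only point requiring a word of care is the claim that the family $\{\rvar_x\}$ can be taken \emph{jointly} Gaussian (not merely each $\rvar_x$ individually Gaussian), which follows because all the $\rvar_x$ live in the single Gaussian space generated by $\{\xi_n\}$ (equivalently, because the Kolmogorov construction builds in joint Gaussianity of all finite subfamilies by fiat).
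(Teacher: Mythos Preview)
Your proposal is correct; both routes you describe (embedding \sH isometrically into the $L^2$ space of an i.i.d.\ Gaussian sequence, or directly invoking Kolmogorov consistency on the Gaussian finite-dimensional distributions) are standard and sound. However, there is nothing to compare against: the paper does not supply its own proof of Theorem~\ref{thm:Kolmo-construction}, but merely states it as a known result and refers the reader to \cite{PaSc72}. Your write-up is thus strictly more detailed than what the paper offers, and either of your two arguments would serve as a complete proof.
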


Lemma~\ref{thm:M(x,y)-is-pd} can be considered as a (somewhat trivial) converse of Aronszajn's theorem, 
and will be useful in \S\ref{sec:finite-approximants}.

\begin{lemma}\label{thm:M(x,y)-is-pd}
  Given any function $v:X \to \sH$ mapping $X$ into a Hilbert space, the function defined by 
  $  M(x,y) := \ipH{v(x),v(y)}$
  is pd on $X$. 
  \begin{proof}
    If $\gx=\{\gx_x\}_{x \in X}$ is not identically $0$, then for any finite $F \ci X$,
    \linenopax
    \begin{align}\label{eqn:<v_x,v_y>-is-psd}
      \sum_{x \in F} \sum_{y \in F} \cj{\gx_x} \gx_y \ipH{v(x),v(y)}
	  = \ipH[{\negsp[10]}\sH]{\sum_{x \in F} \gx_x v(x), \sum_{y \in F} \gx_y v(y)}
	  = \|w\|_\sH^2 > 0,
	\end{align}
	where $w \in \sH$ is the function defined by $w = \sum_{x \in F} \gx_x v(x)$. Note that the final inequality is strict by Lemma~\ref{thm:linear-independence-of-kernel}. 
  \end{proof}
\end{lemma}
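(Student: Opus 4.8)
The plan is to prove that $M(x,y) := \ipH{v(x),v(y)}$ is positive \emph{definite} (not merely positive semidefinite) directly from the definition, by recognizing the double sum as a squared Hilbert-space norm. First I would fix an arbitrary finite subset $F \ci X$ and an arbitrary finitely-supported sequence $\gx = \{\gx_x\}_{x \in X}$ that is not identically zero, and set $w := \sum_{x \in F} \gx_x v(x) \in \sH$. Using sesquilinearity of $\ipH{\cdot,\cdot}$ in both slots, I would expand $\|w\|_\sH^2 = \ipH{w,w}$ to get exactly the double sum $\sum_{x,y \in F} \cj{\gx_x}\gx_y \ipH{v(x),v(y)} = \sum_{x,y\in F} \cj{\gx_x}\gx_y M(x,y)$; this shows the quantity in \eqref{eqn:def:positive-semidefinite} equals $\|w\|_\sH^2 \geq 0$, which already gives psd for free.

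The only real content is promoting this to a \emph{strict} inequality, i.e. showing $w \neq 0$ whenever $\gx \not\equiv 0$. This is where Lemma~\ref{thm:linear-independence-of-kernel} enters: that lemma asserts $\{v_x\}_{x \in X \less \{o\}}$ is linearly independent in \sH, so a nontrivial finite linear combination of the $v_x$ cannot vanish, forcing $\|w\|_\sH^2 > 0$. I would invoke it as a black box, exactly as suggested by the placement of that lemma in the excerpt. (One should note the mild abuse of notation here: the statement writes $v:X \to \sH$ generally, but in the application $v = v_x = v_{x,o}$ is the reference-point kernel family, and it is for that family that linear independence was established; the reference to Lemma~\ref{thm:linear-independence-of-kernel} is only meaningful in that setting, where one also tacitly excludes the index $o$ or absorbs the constant.)

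The argument is essentially a one-line computation plus a citation, so there is no substantial obstacle; the display already written in the excerpt is the proof. The only thing to be careful about is the direction of the sesquilinearity convention (the paper's inner product is conjugate-linear in the first slot, matching the placement of the conjugate on $\gx_x$) so that the expansion of $\|w\|_\sH^2$ matches the left-hand side of \eqref{eqn:def:positive-semidefinite} on the nose. Concretely, the proof reads:

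\begin{proof}
  If $\gx=\{\gx_x\}_{x \in X}$ is not identically $0$, then for any finite $F \ci X$,
  \linenopax
  \begin{align*}
    \sum_{x \in F} \sum_{y \in F} \cj{\gx_x} \gx_y \ipH{v(x),v(y)}
    = \ipH[{\negsp[10]}\sH]{\sum_{x \in F} \gx_x v(x), \sum_{y \in F} \gx_y v(y)}
    = \|w\|_\sH^2 > 0,
  \end{align*}
  where $w := \sum_{x \in F} \gx_x v(x) \in \sH$, and the strict inequality follows since $w \neq 0$ by the linear independence established in Lemma~\ref{thm:linear-independence-of-kernel}.
\end{proof}
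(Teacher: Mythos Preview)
Your proof is correct and essentially identical to the paper's own proof: both reduce the double sum to $\|w\|_\sH^2$ via sesquilinearity and then invoke Lemma~\ref{thm:linear-independence-of-kernel} for strictness. Your parenthetical remark that the appeal to linear independence only makes sense for the specific kernel family $\{v_x\}_{x \in X \less \{o\}}$ (and not for an arbitrary map $v:X \to \sH$) is a valid observation about the statement's phrasing that the paper does not make explicit.
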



\section{The Laplacian as an operator on the energy space}
\label{sec:Matrices-and-self-adjoint-operators-in-the-energy-space}

In this section, we introduce the setting of a resistance network $(\Graph,\cond)$. There are a couple of different (but very natural) Hilbert spaces of functions defined on such a domain, both of which are important for understanding the underlying network. The study of a network is inextricably linked to the study of the associated Laplace operator: note that if $A$ is the adjacency matrix of a network, then as matrices, $\Lap = \cond \id - A$; see \eqref{eqn:pointwise-Lap-in-HE-as-sum}. 

This section aims to compare the $\ell^2(\Graph)$ theory of \Lap (as discussed in \S\ref{sec:self-adjoint-operators-in-L2}) with the behavior of \Lap on a second Hilbert space of functions naturally associated to the network: the energy space \HE; see \cite{DGG,ERM,bdG,RBIN} and also the references \cite{Kig01,Kig03,Lyons:ProbOnTrees,Soardi94}.%
  \footnote{\HE is different from the space $\mathbf{D}$ discussed in \cite{Soardi94} (also called $(\sE,\sF_V)$ in \cite[Prop.~2.19]{Kig03}), but the two are closely related; see \cite[\S4.1]{DGG}, for example.} 
It is defined in Lemma~\ref{def:HE} from an energy form \energy on functions on $(\Graph,\cond)$ defined in Definition~\ref{def:energy-form}.   

The results of \S\ref{sec:self-adjoint-operators-in-L2} imply that the network Laplacian is essentially self-adjoint as an operator on $\ell^2(\Graph)$, i.e., on $\ell^2(\Graph,\gm)$ where \gm is counting measure; see also \cite{KellerLenz09, KellerLenz10}. However, the action of the Laplacian on \HE is markedly different. In particular, it is not always essentially self-adjoint as an operator on \HE, in sharp contrast to Theorem~\ref{thm:matrix-conds-for-essential-selfadjointness}. Example~\ref{exm:unbounded-B-matrix-defect} illustrates this phenomenon with an explicitly computed defect eigenvector and (nonzero) deficiency indices.

It also turns out that there is no natural onb for \HE; the natural candidate would be the Dirac masses $\{\gd_x\}_{x \in \Graph}$, but these are not orthogonal and typically don't even have dense span in \HE. Consequently, we rely on a reproducing kernel $\{v_x\}_{x \in \Graph \less\{o\}}$, as developed axiomatically in the previous section. In fact, this is part of the motivation behind \S\ref{sec:repkernel-energy-space}.

Due in part to their close relation with Markov chains, there is a massive literature on resistance networks (not always using this terminology). Many studies use Hilbert space techniques, but almost all of these focus on $\ell^2(\Graph,\gm)$; see \cite{Soardi94, Chu01} and the references therein; other articles use methods from potential theory and discrete harmonic analysis \cite{Kig01,Kig03}. See also \cite[\S9]{Lyons:ProbOnTrees} for an alternative view on the energy space, presented in terms of an $\ell^2$ space of functions on the \emph{edges} of $G$.

\subsection{Networks and the energy space}

\begin{defn}\label{def:resistance-networks}
  A \emph{resistance network} is a connected weighted graph $(\Graph,\cond)$. Here $\Graph = (\verts,\edges)$ is a graph with a countable vertex set \verts, and at most one edge $e \in \edges$ between any two vertices. From this point onward, we write $x \in \Graph$ to indicate that $x \in \verts$. The adjacency relation on \Graph is determined entirely by the conductance function $\cond:\verts \times \verts \to [0,\iy)$, a nonnegative and symmetric real-valued function denoted $\cond_{xy} = \cond(x,y)$. We say $x,y \in \Graph$ are connected by an edge of weight $\cond_{xy}$ if and only if $\cond_{xy} > 0$; in this case, we write $x \nbr y$. Vertices may not have finite valency, but they must have finite total conductance: 
  \linenopax
  \begin{align}
    \cond(x) := \sum_{y \in \Graph} \cond_{xy} < \iy. 
  \end{align}
  We also assume $\cond_{xx}=0$ for every $x \in \Graph$. 
\end{defn}
In Definition~\ref{def:resistance-networks}, the term \emph{connected} means that for all $x,y \in \Graph$, there is a finite sequence $\{x=x_0, x_1, \dots, x_n=y\} \ci X$ such that $\cond_{x_i x_{i-1}} > 0$ for $i = 1,\dots, n$. There is a bijective correspondence between the class of resistance networks and the class of irreducible reversible Markov chains; the correspondence is given by considering \verts as the state space and defining the transition probability by $p(x,y) = \cond_{xy}/\cond(x)$, for vertices (states) $x$ and $y$. 
  
\begin{defn}\label{def:energy-form}
  For functions $u,v$ on a resistance network, one can define the (sesquilinear) \emph{energy form}
  \linenopax
  \begin{align}\label{eqn:def:energy-form}
    \energy (u,v) = \frac12 \sum_{x,y \in \Graph} \cond_{xy}(\cj{u(x)} - \cj{u(y)}) (v(x)-v(y))
  \end{align}
  with domain
     $\dom \energy := \{u :\Graph \to \bC \suth \energy(u,u) < \iy\}$.
  One says that $\energy(u) := \energy(u,u)$ is the \emph{energy} of $u$. 
\end{defn}

It is clear from \eqref{eqn:def:energy-form} and the connectedness of the network that $\energy(u) = 0$ iff $u$ is constant, so $\ker \energy = \bC\one$. Therefore, we define an equivalence relation by $u \sim v$ iff $u(x)-v(x)=k$ for some fixed $k \in \bC$. 

\begin{lemma}\label{thm:energy-space-is-complete}
  Under the above equivalence relation, and with 
  $\|\cdot\|_\energy = \sqrt{\energy(\cdot,\cdot)}$, the quotient
  \linenopax
  \begin{align}\label{eqn:matrix-energy-space}
    \HE := \frac{\dom \energy}{\ker \energy} 
         = \{u + \bC\one \suth u:\Graph \to \bC \text{ and } \|u\|_\energy < \iy\}
  \end{align}
  is a Hilbert space, and the elements of \HE are functions on $\Graph$ modulo constants.
  \begin{proof}
    It can be checked directly that the above collection of (equivalence classes of) functions on \Graph is complete via an isometric embedding into a larger Hilbert space as in \cite{Lyons:ProbOnTrees,MuYaYo} or by a standard Fatou's lemma argument as in \cite{Soardi94}. 
  \end{proof}
\end{lemma}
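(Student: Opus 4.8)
The plan is the following. First note that the lemma's real content is \emph{completeness}: the form $\energy$ of \eqref{eqn:def:energy-form} is positive semidefinite because $\energy(u)=\tfrac12\sum_{x,y}\cond_{xy}|u(x)-u(y)|^2$ is a sum of nonnegative terms, so $\energy(\cdot)^{1/2}$ is a seminorm on $\dom\energy$ (which is therefore a linear space, being closed under scalar multiplication and, via $\energy(u+v)\leq 2\energy(u)+2\energy(v)$, under sums), its null space is $\ker\energy=\bC\one$ by connectedness, and polarization induces a genuine inner product on $\HE=\dom\energy/\ker\energy$ whose elements are by construction functions on $\Graph$ modulo additive constants. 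All of this is routine; I would establish completeness by the standard Cauchy-sequence argument via Fatou's lemma, as in \cite{Soardi94}.

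The one ingredient I need is a pointwise estimate. Fix a reference vertex $o\in\Graph$. For $x\in\Graph$, connectedness supplies a finite path $o=x_0\nbr x_1\nbr\dots\nbr x_n=x$ with distinct edges; writing $u(x)-u(o)=\sum_{i=1}^n\bigl(u(x_i)-u(x_{i-1})\bigr)$ and applying the Cauchy--Schwarz inequality with weights $\cond_{x_{i-1}x_i}$ produces a finite constant $C_x=\bigl(\sum_{i=1}^n\cond_{x_{i-1}x_i}^{-1}\bigr)^{1/2}$, depending only on $(\Graph,\cond)$ and the chosen path, with
\linenopax
\begin{align}\label{eqn:completeness-ptwise}
  |u(x)-u(o)|\leq C_x\sqrt{\energy(u)}, \qq\text{for all } u\in\dom\energy.
\end{align}
(This is precisely Axiom~\ref{axm:kernel} verified for the energy space.)

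Now let $\{[u_n]\}_{n\geq1}$ be Cauchy in $\HE$ and pick representatives with $u_n(o)=0$. By \eqref{eqn:completeness-ptwise} applied to $u_n-u_m$, the numerical sequence $\{u_n(x)\}_n$ is Cauchy for each fixed $x$, hence converges to some $u(x)$; this defines a function $u$ on $\Graph$. Since a Cauchy sequence is bounded, $B:=\sup_n\energy(u_n)<\iy$, and for every finite $F\ci\Graph\times\Graph$, pointwise convergence gives
\linenopax
\begin{align*}
  \tfrac12\sum_{(x,y)\in F}\cond_{xy}|u(x)-u(y)|^2
  &= \lim_{n\to\iy}\tfrac12\sum_{(x,y)\in F}\cond_{xy}|u_n(x)-u_n(y)|^2
   \leq B;
\end{align*}
taking the supremum over $F$ yields $\energy(u)\leq B$, so $u\in\dom\energy$. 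Applying the same truncate-then-pass-to-the-limit step to $u_n-u_m$ (with $n$ fixed, $m\to\iy$) gives
\linenopax
\begin{align*}
  \energy(u_n-u)
  &= \sup_F\,\tfrac12\sum_{(x,y)\in F}\cond_{xy}\bigl|(u_n-u)(x)-(u_n-u)(y)\bigr|^2 \\
  &\leq \liminf_{m\to\iy}\energy(u_n-u_m)
   \leq \sup_{m\geq n}\energy(u_n-u_m),
\end{align*}
and the right-hand side tends to $0$ as $n\to\iy$ because $\{[u_n]\}$ is Cauchy. Hence $[u_n]\to[u]$ in $\HE$, which proves completeness.

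I would also remark that an equivalent route (used in \cite{Lyons:ProbOnTrees,MuYaYo}) is to fix an orientation of the edges and embed $\HE$ isometrically into the $\ell^2$-space of oriented edges via $[u]\mapsto\bigl(\cond_{xy}^{1/2}(u(x)-u(y))\bigr)_{x\nbr y}$, and then show that the range is closed --- which again reduces to \eqref{eqn:completeness-ptwise}. Either way, the only genuinely delicate point is interchanging a limit with an infinite sum; that is exactly where Fatou's lemma (applied through finite truncations) does the work, and it is available only because \eqref{eqn:completeness-ptwise} controls $|u_n(x)-u(x)|$ uniformly along any energy-bounded sequence.
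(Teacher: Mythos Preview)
Your proof is correct and is precisely the ``standard Fatou's lemma argument'' the paper invokes from \cite{Soardi94}, with the pointwise estimate \eqref{eqn:completeness-ptwise} playing exactly the role it does there; you also mention the isometric edge-space embedding of \cite{Lyons:ProbOnTrees,MuYaYo}, which is the paper's other cited route. In short, you have supplied the details the paper omits, via the same two approaches.
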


\begin{defn}\label{def:HE}
  The \emph{energy space} is the Hilbert space \HE with inner product $\la u,v\ra_\energy := \energy(u,v)$.
\end{defn}

\begin{theorem}\label{thm:energy-via-axioms}
  The energy space is a special case of the axiomatic presentation in \S\ref{sec:repkernel-energy-space}.
  \begin{proof}
  Note that \eqref{eqn:def:energy-form} gives 
  \linenopax
  \begin{align}\label{eqn:}
    \ipH[\energy]{\gd_x,\gd_y} = -\cond_{xy},
    \qq\text{and}\qq
    \ipH[\energy]{\gd_x,\gd_x} = \cond(x).    
  \end{align}
  In particular, the condition $\cond(x) < \iy$ ensures $\gd_x \in \HE$ for every $x \in \Graph$, and so Axiom~\ref{axm:Diracs} is satisfied. To see that Axiom~\ref{axm:kernel} is satisfied, one can argue as in \cite[Lem.~2.4]{DGG}: since \Graph is connected, choose a path $\{x_i\}_{i=0}^n$ with $x_0=y$, $x_n=x$ and $\cond_{x_i,x_{i-1}}>0$ for $i=1,\dots,n$, and the Schwarz inequality yields
    \linenopax
    \begin{align*}
      |L_{x,y} u |^2
      = |u(x)-u(y)|^2
      &= \left|\sum_{i=1}^n \sqrt{\frac{\cond_{x_i,x_{i-1}}}{\cond_{x_i,x_{i-1}}}} (u(x_i)-u(x_{i-1}))\right|^2 
      \leq k^2 \energy(u),
      \q\text{for}\q
      k = \left(\sum_{i=1}^n \cond_{x_i,x_{i-1}}^{-1} \right)^{1/2}.
  \end{align*}
  Consequently Lemma~\ref{thm:axiomatic-repkernel} applies and we have a relative reproducing kernel $\{v_x\}_{x \in \Graph} \in \HE$, as in Definition~\ref{def:relative-reproducing-kernel}, given by $v_x := v_{x,o}$. Although the elements of \HE are equivalence classes, computations can be performed using representatives whenever these computations are independent of the choice of representative. Abusing notation, we may take the function $u$ to be the representative of $u \in \HE$ satisfying $u(o)=0$.%
    \footnote{After an initial draft of this paper was complete, we discovered that researchers studying metrized graphs use a similar object; in \cite{BakerRumely, BakerFaber} this is called the ``$j$-function'' and is roughly given by $j_z(x,y) = v_{y,z}(x)$. The two objects do not precisely coincide because for metrized graphs, $x,y,z$ may be points in the interior of a edge, as edges are isometric to intervals in that context.} 
  \end{proof}
\end{theorem}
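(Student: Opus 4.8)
The plan is to check that the energy space $\HE$ of Definition~\ref{def:HE} is an instance of the axiomatic framework of \S\ref{sec:repkernel-energy-space}, with the quadratic form $\sQ$ taken to be $u \mapsto \energy(u,u)$. First I would observe that polarizing $\energy$ via \eqref{eqn:axiomatic-inner-prod} recovers exactly $\la u,v\ra_\energy = \energy(u,v)$, so that $\dom\sQ = \dom\energy$, $\ker\sQ = \ker\energy$, and the quotient $\sH = \dom\sQ/\ker\sQ$ coincides with $\HE$; that $\HE$ is a Hilbert space under this inner product is exactly Lemma~\ref{thm:energy-space-is-complete}. It then remains to verify Axioms~\ref{axm:constants}--\ref{axm:kernel}.

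Axiom~\ref{axm:constants} is immediate: connectedness of $(\Graph,\cond)$ forces $\energy(u)=0$ to imply $u$ constant, so $\ker\energy = \bC\one$ and in particular $\one \in \ker\sQ$. For Axiom~\ref{axm:Diracs}, expanding \eqref{eqn:def:energy-form} on Dirac masses gives $\ipH[\energy]{\gd_x,\gd_y} = -\cond_{xy}$ for $x \neq y$ and $\ipH[\energy]{\gd_x,\gd_x} = \cond(x)$; hence $\energy(\gd_x) = \cond(x) < \iy$ by the finite-total-conductance assumption, so $\gd_x \in \dom\energy$ for every $x$.

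The one step carrying real content is Axiom~\ref{axm:kernel}, the boundedness of the point-difference functional $L_{x,y} u = u(x)-u(y)$ on $\HE$. Here I would use connectedness to choose a finite path $y = x_0 \nbr x_1 \nbr \dots \nbr x_n = x$, write $u(x)-u(y)$ as the telescoping sum $\sum_{i=1}^n (u(x_i)-u(x_{i-1}))$, insert the trivial factors $\sqrt{\cond_{x_i,x_{i-1}}}/\sqrt{\cond_{x_i,x_{i-1}}}$, and apply the Cauchy--Schwarz inequality to obtain $|u(x)-u(y)|^2 \le \bigl(\sum_{i=1}^n \cond_{x_i,x_{i-1}}^{-1}\bigr)\energy(u)$. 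This is Axiom~\ref{axm:kernel} with $C_{x,y} = \bigl(\sum_{i=1}^n \cond_{x_i,x_{i-1}}^{-1}\bigr)^{1/2}$ along the chosen path, mirroring \cite[Lem.~2.4]{DGG}.

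Once the axioms hold, Lemma~\ref{thm:axiomatic-repkernel} produces the relative reproducing kernel, and fixing an origin $o$ yields the family $v_x = v_{x,o} \in \HE$, after which everything in \S\ref{sec:repkernel-energy-space} applies verbatim. I expect the path estimate for Axiom~\ref{axm:kernel} to be the only point requiring more than a one-line verification; it is also the sole place where connectedness is genuinely needed, and it is what makes $\HE$ a reproducing kernel Hilbert space rather than merely a quotient form domain.
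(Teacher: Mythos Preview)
Your proposal is correct and follows essentially the same argument as the paper: the same computation of $\energy(\gd_x)=\cond(x)$ for Axiom~\ref{axm:Diracs}, and the identical telescoping-path plus Cauchy--Schwarz estimate for Axiom~\ref{axm:kernel}. If anything, you are slightly more explicit than the paper in spelling out Axiom~\ref{axm:constants} and the identification $\sH=\HE$ via Lemma~\ref{thm:energy-space-is-complete}.
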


\begin{remark}\label{rem:Aronszajn-HE}
  Since one may add a constant function without changing the energy, $\dom \energy = \sH \oplus \bC$. Then, as in  \cite[Ex.~9.6(b)]{Lyons:ProbOnTrees}, one has
  \linenopax
  \begin{align}\label{eqn:quotient}
    \HE = \frac{\sH \oplus \bC}\bC.
  \end{align}
  Upon combining Definition~\ref{def:Laplacian} with \eqref{eqn:def:energy-form}, one obtains the Laplacian as the (graph) closure of the operator defined pointwise on the dense domain $\dom \Lap = \spn\{\one, \{v_x\}_{x \in X \less \{o\}}\}$ by
  \linenopax
  \begin{align}\label{eqn:pointwise-Lap-in-HE-as-sum}
    (\Lap u)(x) = \sum_{y \nbr x} \cond_{xy} (u(x) - u(y)).
  \end{align}
\end{remark}

\begin{remark}[The meaning of $\Lap u$]
  \label{rem:range-of-Lap}
  Note that $\Lap u$ is a function on \Graph, not an equivalence class of functions (the differences in \eqref{eqn:pointwise-Lap-in-HE-as-sum} specify the value of $\Lap u(x)$ unambiguously). 
  
  It is also clear that \Lap is Hermitian on \HE; note that Corollary~\ref{thm:lap-is-hermitian} holds in this context. It is also the case that \Lap commutes with conjugation, and this ensures that the deficiency indices of \Lap on \HE will be equal. \S\ref{sec:Lap-fails-to-be-essentially-self-adjoint-on-HE} discusses a situation in which \Lap on \HE has deficiency indices $(1,1)$.
  %
  
  Using the standard onb $\{\gd_x\}_{x \in \Graph}$ for $\ell^2(\Graph)$, and the matrix $A$ with entries $a_{x,y} = - \cond_{xy}$, formula \eqref{eqn:pointwise-Lap-in-HE-as-sum} is equivalent to matrix multiplication:
  \linenopax
  \begin{align}\label{eqn:Lap-as-matrix-multiplication}
    \Lap u = A u,
  \end{align}
  so that $a_{x,y}$ defines a matrix Laplacian on $\ell^2(X)$ in the sense of Definition~\ref{def:matrix-Laplacian}. In fact, the only real difference between Definition~\ref{def:resistance-networks} and Definition~\ref{def:matrix-Laplacian} is the addition of the connectedness condition, which appears in this section to ensure that the kernel of the energy form contains only (globally) constant functions. 

\end{remark}

\subsection{The Laplacian can fail to be essentially self-adjoint on \HE}
\label{sec:Lap-fails-to-be-essentially-self-adjoint-on-HE}

\begin{exm}[The geometric integers]
  \label{exm:unbounded-B-matrix-defect}
  For $b>1$, consider the network $(\bZ_+,b^n)$ consisting of the nonnegative integers with an edge of conductance $b^k$ connecting the vertex $k-1$ to the vertex $k$:
  \linenopax
    \begin{align*}
    \xymatrix{
      \vertex{0} \ar@{-}[r]^{b} 
      & \vertex{1} \ar@{-}[r]^{b^2} 
      & \vertex{2} \ar@{-}[r]^{b^3} 
      & \vertex{3} \ar@{-}[r]^{b^4} & \dots
    }
  \end{align*} 
  See \cite{RBIN,DGG,ERM,bdG,OTERN}.
\end{exm}
  
\begin{prop}[Defect on the geometric integers]
  As an operator on the energy space of the network $(\bZ_+,b^n)$, the Laplacian is not essentially-self-adjoint.
  \begin{proof}
  We will explicitly construct a function $u$ which has finite energy and which satisfies $\Lap u(n) = -u(n)$ at every vertex $n$ in the network. To this end, recursively define a system of polynomials $\{\gf_n\}$ and $\{\gy_n\}$ in the variable $r$ by
  \linenopax
  \begin{align}\label{eqn:poly-system}
    \left[\begin{array}{cc}
      \gf_n \\ \gy_n
    \end{array}\right] 
    =
    \left[\begin{array}{cc}
      1 & 1 \\ r^n & 1+r^n
    \end{array}\right] 
    \cdots
    \left[\begin{array}{cc}
      1 & 1 \\ r^2 & 1+r^2
    \end{array}\right] 
    \left[\begin{array}{cc}
      1 & 1 \\ r & 1+r
    \end{array}\right] 
    \left[\begin{array}{cc}
      0 \\ 1
    \end{array}\right] 
  \end{align}
  Putting $r=\frac1b$, the desired function $u$ is defined by $u(n) := \gy_n(1/b)$. Note that $\gf_{n} = \gf_{n-1} + \gy_{n-1}$ and \smash{$\gy_{n} = \gy_{n-1} + r^n \gf_{n}$}.
  Hence 
  \begin{align*}
    u(n) - u(n-1) 
    = \gy_n(\tfrac1b) - \gy_{n-1}(\tfrac1b)
    = \gy_{n-1}(\tfrac1b) + r^n \gf_n(\tfrac1b) - \gy_{n-1}(\tfrac1b)
    = r^n \gf_n(\tfrac1b)
  \end{align*}
  and therefore, suppressing the evaluation at the fixed value $r=1/b$, we have
  \begin{align}\label{eqn:phi-as-increment}
    \gf_n = b^n (u(n) - u(n-1)) 
  \end{align}
  Consequently, $\Lap u(n) = \gf_n - \gf_{n+1} = -\gy_n = -u(n)$ implies that $\Lap u = -u$. The proof will be complete once we show that $u \in \HE$, which is carried out in Lemma~\ref{thm:defect-vector-has-finite-energy}. 
  \end{proof}
\end{prop}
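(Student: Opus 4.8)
\medskip
\noindent\textbf{Proof proposal.}\quad The plan is to invoke criterion~(iii) of Lemma~\ref{thm:essentially-sa-iff-no-defect}, which applies because $\Lap$ is semibounded on $\HE$ (Lemma~\ref{thm:semibounded}): it suffices to exhibit one non-constant function $u$ of finite energy with $\Lap^\ad u = -u$. First I would reduce this adjoint equation to the \emph{pointwise} equation $\Lap u = -u$. If $u \in \HE$, then the network sum $\Lap u(n) = \sum_{m \nbr n}\cond_{nm}(u(n)-u(m))$ converges (finite total conductance at each vertex, together with Cauchy--Schwarz against the finite local energy), and a discrete Gauss--Green identity gives $\ipH[\energy]{\gd_n, u} = \Lap u(n)$ and $\ipH[\energy]{v_x, u} = u(x) - u(o)$. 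Hence, for $w = \gx_0\one + \sum_{x \in F}\gx_x v_x \in \dom\Lap$, using $\Lap\one = 0$ and $\Lap v_x = \gd_x - \gd_o$,
\[
  \ipH[\energy]{\Lap w, u} = \sum_{x \in F}\cj{\gx_x}\bigl(\Lap u(x) - \Lap u(o)\bigr) = -\sum_{x \in F}\cj{\gx_x}\bigl(u(x)-u(o)\bigr) = -\ipH[\energy]{w,u},
\]
so $u \in \dom\Lap^\ad$ and $\Lap^\ad u = -u$; since $\overline{\Lap}$ is positive, such a $u$ automatically lies outside $\dom\overline{\Lap}$ and is thus a genuine defect vector. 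Everything therefore reduces to producing a finite-energy, non-constant solution of $\Lap u = -u$.

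Second, I would solve the pointwise equation. On $(\bZ_+, b^n)$ the relation $\Lap u(n) = -u(n)$ is the three-term recurrence $b^{n+1}(u(n)-u(n+1)) + b^n(u(n)-u(n-1)) = -u(n)$ for $n \geq 1$, together with $b(u(0)-u(1)) = -u(0)$. With $r := 1/b$ and the current $\gf_n := b^n(u(n)-u(n-1))$ (setting $\gf_0 := 0$), this becomes the first-order system $\gf_n = \gf_{n-1} + u(n-1)$ and $u(n) = u(n-1) + r^n\gf_n$, which is precisely the transfer-matrix product of \eqref{eqn:poly-system} with $u(n) = \gy_n(1/b)$, $\gf_n = \gf_n(1/b)$, and the normalization $\gf_0 = 0$, $u(0) = 1$. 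The telescoping identities $\Lap u(n) = \gf_n - \gf_{n+1}$ and $\gf_{n+1} - \gf_n = u(n)$ then force $\Lap u = -u$, and $u$ is non-constant since $u(1) - u(0) = r\gf_1 = r \neq 0$, so $[u] \neq 0$ in $\HE$.

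Third — and this is the real work, i.e.\ Lemma~\ref{thm:defect-vector-has-finite-energy} — I must verify $u \in \HE$. Since $b^n r^{2n} = r^n$,
\[
  \energy(u) = \sum_{n=1}^\iy b^n\,|u(n)-u(n-1)|^2 = \sum_{n=1}^\iy r^n\gf_n^2,
\]
so it suffices to control the growth of $\gf_n$; I would in fact prove the stronger statement that $u(n)$ stays \emph{bounded}, whence $\gf_n = \sum_{j=0}^{n-1}u(j) = O(n)$ and the series converges geometrically. From the telescoped identities $\gf_n = \sum_{j<n}u(j)$ and $u(n) = 1 + \sum_{k=1}^n r^k\gf_k$ (all terms positive), summing the geometric series in $k$ gives $u(n) \leq 1 + \tfrac{r}{1-r}S_n$ with $S_n := \sum_{j<n}r^j u(j)$, hence $S_{n+1} \leq \bigl(1+\tfrac{r^{n+1}}{1-r}\bigr)S_n + r^n$. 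Because $\sum_n r^n < \iy$ and $\prod_n\bigl(1+\tfrac{r^{n+1}}{1-r}\bigr) < \iy$, a discrete Gr\"onwall (telescoping) estimate bounds $S_n$ uniformly, so $u(n) \leq C$, so $\gf_n \leq Cn$, so $\energy(u) \leq C^2\sum_n n^2 r^n < \iy$.

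The main obstacle is exactly this uniform bound. The naive recursion estimate $u(n) \leq u(n-1) + 2r^n\max(\gf_{n-1},u(n-1))$ only yields exponential growth $u(n) \leq C\cdot 2^n$, which is worthless when $b \leq 4$ (there $2^n$ does not beat the critical rate $b^{n/2}$). One genuinely has to exploit the summability of $\sum r^n$ — equivalently, that the off-diagonal weights decay fast enough relative to the diagonal in the units set by the measure — to upgrade exponential growth of the eigenfunction to polynomial (indeed bounded) growth.
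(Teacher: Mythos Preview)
Your argument is correct, and the overall architecture matches the paper's: build the same transfer-matrix solution $u(n)=\gy_n(1/b)$, verify $\Lap u=-u$ pointwise, then check $u\in\HE$. You are also more explicit than the paper about why a pointwise solution of $\Lap u=-u$ really lies in $\ker(\id+\Lap^\ad)$; the paper tacitly uses $\ipH[\energy]{\gd_x,u}=\Lap u(x)$ without spelling out the reduction.

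The genuine point of departure is the finite-energy estimate. The paper (Lemma~\ref{thm:geometric-bounds-on-p,q}) runs a simultaneous induction to show $\gf_n\le n^m$ and $\gy_n\le (n+1)^m-n^m$ for an integer $m$ chosen so that $m(m-1)\ge\max_{t\ge0}t^2r^t$; the energy bound then comes out as a polylogarithm $\sum r^n n^{2m}$, and boundedness of $u$ is deduced afterwards from the summability of $r^n n^m$. Your route is more direct: from $\gf_n=\sum_{j<n}u(j)$ and $u(n)=1+\sum_{k\le n}r^k\gf_k$ you extract $u(n)\le 1+\tfrac{r}{1-r}S_n$ with $S_n=\sum_{j<n}r^ju(j)$, feed this into $S_{n+1}=S_n+r^nu(n)$, and close with a discrete Gr\"onwall product. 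This gives $u$ bounded immediately (hence $\gf_n=O(n)$ and $\energy(u)\le C^2\sum n^2r^n$), which is sharper than the paper's $\gf_n=O(n^m)$ with $m$ growing as $b\downarrow1$. The paper's induction, on the other hand, packages the two sequences symmetrically and avoids any appeal to infinite products; either way the essential mechanism is the same one you name at the end --- summability of $\sum r^n$ is what upgrades the na\"ive exponential estimate to a polynomial one.
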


We will need the following lemma for the proof of Lemma~\ref{thm:defect-vector-has-finite-energy}.

\begin{lemma}\label{thm:geometric-bounds-on-p,q}
  There is an $m$ such that 
    \linenopax
    \begin{align}\label{eqn:phi-bound}
      \gf_{n} &\leq n^m, \qq \text{and} \qq
      \gy_{n} \leq (n+1)^m - n^m
      \qq\text{ for all } n \in \bZ_+,
    \end{align}
  where $\gf_n$ and $\gy_n$ are the polynomials defined in \eqref{eqn:poly-system}.   
  \begin{proof}
    We prove both bounds simultaneously by induction, so assume both bounds of \eqref{eqn:phi-bound} hold for $n-1$.
    The estimate for $\gf_{n} = \gf_{n-1} + \gy_{n-1}$ is immediate from the inductive hypotheses.
    For the $\gy_{n}$ estimate, choose an integer $m$ so that 
    \linenopax
    \begin{align*}
      m(m-1) 
      \geq \max\{t^2 r^t \suth t \geq 0\} 
      = \left(\frac{2}{e \log b}\right)^2. 
    \end{align*}
    Then $n^2 r^{n} \leq m(m-1)$ for all $n$, so
    \linenopax
    \begin{align*}
      2 + r^{n}
      \leq 2 + \frac{m(m-1)}{n^2}
      \leq \left(\frac{n-1}{n}\right)^m + \left(\frac{n+1}{n}\right)^m,
    \end{align*}
    by using the binomial theorem to expand $\left(\frac{n \pm 1}{n}\right)^m = \left(1 \pm \frac{1}{n}\right)^m$. Multiplying by $n^m$ gives
    \linenopax
    \begin{align*}
      \left(n^m - (n-1)^m\right) + r^{n} n^m &\leq (n+1)^m - n^m,
    \end{align*}
    which is sufficient because the left side is an upper bound for $\gy_{n} = \gy_{n-1} + r^{n} \gf_{n}$.
  \end{proof}
\end{lemma}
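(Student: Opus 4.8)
The plan is to prove both inequalities simultaneously by induction on $n$, understanding $\gf_n,\gy_n$ as evaluated at the fixed value $r = 1/b \in (0,1)$. The engine of the induction is the pair of recursions $\gf_n = \gf_{n-1} + \gy_{n-1}$ and $\gy_n = \gy_{n-1} + r^n\gf_n$ read off from the matrix product \eqref{eqn:poly-system}. After verifying the base cases (where $\gf_0=0$, $\gy_0=1$, $\gf_1=1$, $\gy_1=1+r$ all obey \eqref{eqn:phi-bound} once $m\geq 2$), the inductive hypothesis becomes $\gf_{n-1}\leq(n-1)^m$ and $\gy_{n-1}\leq n^m-(n-1)^m$.

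The bound on $\gf_n$ is the easy half and requires no real work: adding the two inductive hypotheses gives $\gf_n = \gf_{n-1}+\gy_{n-1} \leq (n-1)^m + \left(n^m-(n-1)^m\right) = n^m$. The substance is in the $\gy_n$ bound. Feeding the inductive hypotheses into the recursion yields $\gy_n = \gy_{n-1} + r^n\gf_n \leq \left(n^m-(n-1)^m\right) + r^n n^m$, so it suffices to establish the single inequality
\begin{align*}
  \left(n^m-(n-1)^m\right) + r^n n^m \leq (n+1)^m - n^m, \qquad\text{equivalently}\qquad 2n^m + r^n n^m \leq (n-1)^m + (n+1)^m,
\end{align*}
for every $n\in\bZ_+$ and some $m$ depending only on $b$.

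The key structural insight is that the error term $r^n n^m$ must be absorbed into the discrete second difference $(n-1)^m - 2n^m + (n+1)^m$. Dividing by $n^m$, the target reads $2 + r^n \leq (1-\tfrac1n)^m + (1+\tfrac1n)^m$, and the binomial theorem gives $(1-\tfrac1n)^m + (1+\tfrac1n)^m = 2 + \frac{m(m-1)}{n^2} + (\text{nonnegative higher even-order terms})$, the odd powers of $\tfrac1n$ cancelling in the sum. Thus it is enough to arrange $r^n \leq \frac{m(m-1)}{n^2}$, i.e. $n^2 r^n \leq m(m-1)$, uniformly in $n$.

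The hard part — and the crux of the argument — is precisely this uniformity: the inequality must hold for all $n$ simultaneously, so $m$ cannot be chosen $n$-by-$n$. I would dispatch it by replacing the discrete quantity $n^2 r^n$ with its global maximum over the continuous variable $t\geq 0$. A one-line calculus computation ($\frac{d}{dt}(t^2 r^t)=0$ at $t = 2/\log b$) shows $\max_{t\geq0} t^2 r^t = (2/(e\log b))^2$, a finite constant, so I simply fix any integer $m\geq 2$ with $m(m-1) \geq (2/(e\log b))^2$. Multiplying the resulting inequality $2+r^n \leq (1-\tfrac1n)^m + (1+\tfrac1n)^m$ back through by $n^m$ recovers the displayed bound, closing the induction for $\gy_n$ and completing the proof.
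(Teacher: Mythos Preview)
Your proof is correct and follows essentially the same approach as the paper's: simultaneous induction using the recursions $\gf_n=\gf_{n-1}+\gy_{n-1}$ and $\gy_n=\gy_{n-1}+r^n\gf_n$, reduction of the $\gy_n$ step to the inequality $n^2 r^n \leq m(m-1)$ via the binomial expansion of $(1\pm\tfrac1n)^m$, and the calculus computation $\max_{t\geq 0} t^2 r^t = (2/(e\log b))^2$ to fix $m$. Your version is slightly more detailed in two respects --- you check the base cases $n=0,1$ explicitly and you spell out why the odd binomial terms cancel and the remaining even terms are nonnegative --- but the argument is otherwise identical to the paper's.
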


\begin{lemma}\label{thm:defect-vector-has-finite-energy}
  The defect vector $u(n) := \gy_n(\frac1b)$ has finite energy and is bounded. 
  \begin{proof}
    Applying Lemma~\ref{thm:geometric-bounds-on-p,q} to the formula for \energy yields 
    \linenopax
    \begin{align*}
      \energy(u)
      &= \sum_{n=1}^\iy b^n (u(n)-u(n-1))^2
      = \sum_{n=1}^\iy r^n \gf_n^2
      \leq \sum_{n=1}^\iy r^n n^{2m}
      = \operatorname{Li}_{-2m}(r) < \iy,
    \end{align*}
    since a polylogarithm indexed by a negative integer is continuous on $\bR$, except for a single pole at 1 (but recall that $r \in (0,1)$).
    
    To see that $u$ is bounded, combine \eqref{eqn:phi-as-increment} and  \eqref{eqn:phi-bound} to obtain $b^n \left(u(n)-u(n-1) \right) \leq n^m$, for some fixed $m$, whence the sequence of increments is summable in much the same way:
    \linenopax
    \begin{align*}
      \lim_{n \to \iy} u(n) - u(0)
      &= \sum_{n=1}^\iy \left(u(n)\vstr[2.2]-u(n-1)\right)
      \leq \sum_{n=1}^\iy r^n n^m
      < \iy.
      \qedhere 
    \end{align*}
  \end{proof}
\end{lemma}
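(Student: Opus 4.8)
The plan is to express $\energy(u)$ directly in terms of the polynomials $\gf_n$ and then bound it using Lemma~\ref{thm:geometric-bounds-on-p,q}. First I would note that in the network $(\bZ_+,b^n)$ there is exactly one edge incident to each pair $\{n-1,n\}$, of conductance $b^n$, so the double sum in \eqref{eqn:def:energy-form} collapses to $\energy(u) = \sum_{n=1}^\iy b^n |u(n)-u(n-1)|^2$. Substituting the identity $u(n)-u(n-1) = r^n\gf_n$ obtained from \eqref{eqn:phi-as-increment} (with $r = 1/b$) and using $b^n r^{2n} = r^n$, this becomes $\energy(u) = \sum_{n=1}^\iy r^n \gf_n^2$.

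Then I would apply Lemma~\ref{thm:geometric-bounds-on-p,q} to fix an $m$ with $\gf_n \le n^m$, giving $\energy(u) \le \sum_{n=1}^\iy r^n n^{2m}$. Since $0 < r < 1$, this series converges; one sees it at once from the root or ratio test, or by recognizing it as the polylogarithm $\operatorname{Li}_{-2m}(r)$, which (being a rational function of $r$ with its only pole at $1$) is finite on $(0,1)$. Hence $u$ has finite energy.

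For boundedness I would telescope the increments: $u(n) = u(0) + \sum_{k=1}^n \bigl(u(k)-u(k-1)\bigr) = \gy_0(1/b) + \sum_{k=1}^n r^k \gf_k$. The polynomials have nonnegative coefficients (the transfer matrices in \eqref{eqn:poly-system} and the seed vector are entrywise nonnegative), so each $\gf_k$ is nonnegative at $r = 1/b$; thus the partial sums are increasing and bounded above by $\gy_0(1/b) + \sum_{k=1}^\iy r^k k^m < \iy$, again by Lemma~\ref{thm:geometric-bounds-on-p,q}. So $u$ is bounded.

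The only substantive work sits in Lemma~\ref{thm:geometric-bounds-on-p,q} itself — establishing that $\gf_n$ and $\gy_n$ grow no faster than a polynomial in $n$ once $r$ is fixed below $1$, which is where the inductive choice of $m$ large enough that $\sup_{t\ge 0} t^2 r^t \le m(m-1)$ enters. Granting that estimate, the present lemma reduces to the elementary convergence of a polynomial-times-geometric series, so I do not anticipate any genuine obstacle here.
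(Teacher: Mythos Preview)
Your proposal is correct and follows essentially the same route as the paper: rewrite $\energy(u)$ as $\sum r^n\gf_n^2$, bound via Lemma~\ref{thm:geometric-bounds-on-p,q}, and then telescope the increments for boundedness. Your explicit observation that the $\gf_k$ are nonnegative (from the entrywise nonnegative transfer matrices) is a nice touch that the paper leaves implicit when it writes $\sum(u(n)-u(n-1))\le\sum r^n n^m$.
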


  Lemma~\ref{thm:defect-vector-has-finite-energy} ensures that the defect vector is bounded; in the example in Figure~\ref{fig:defect-vector}, the defect vector has a limiting value of $\approx 4.04468281$, although the function value does not exceed 4 until $x=10$. The first few values of the function are
  \linenopax
  \begin{align*}
    u &= \left[ \tfrac{3}{2}, \tfrac{17}{8}, \tfrac{173}{64}, \tfrac{3237}{1024}, \tfrac{114325}{32768}, \tfrac{7774837}{2097152}, \tfrac{1032268341}{268435456}, \tfrac{270040381877}{68719476736}, \tfrac{140010315667637}{35184372088832}, \dots\right] \\
      &\approx \left[{1.5}, {2.125}, {2.7031}, {3.1611}, {3.4889}, {3.7073}, {3.8455}, \
{3.9296}, {3.9793}, {4.0080}, \dots \right]
  \end{align*}
  
  \begin{figure}
    \centering
    \scalebox{1.0}{\includegraphics{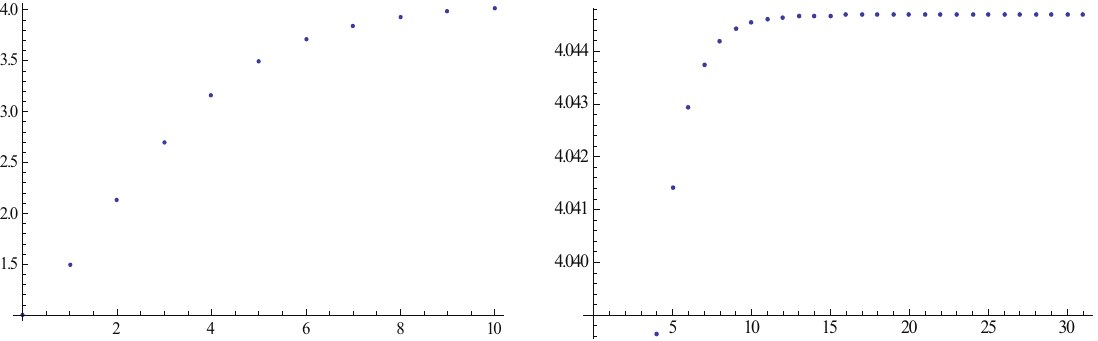}}
    \caption{\captionsize A Mathematica plot of the defect vector $u$ of Example~\ref{exm:unbounded-B-matrix-defect} and Lemma~\ref{thm:defect-vector-has-finite-energy}. The left plot shows $u(x)$ for $x=0,1,\dots,10$, and the plot on the right shows data points for $u(x)$, $x=10,11,12,\dots$.}
    \label{fig:defect-vector}
  \end{figure}


\section{Finite approximants}
\label{sec:finite-approximants}

As mentioned in the previous section, when considering \Lap as an operator on a reproducing kernel Hilbert space \sH, it is not possible to use the matrix representation of \eqref{eqn:Lap-as-matrix-multiplication} because $\{\gd_x\}_{x \in X}$ is not an onb for \sH. Therefore, we change to a different representation of \sH as laid out in \S\ref{sec:repkernel-energy-space}. 

In this section, we return to the setting of \S\ref{sec:repkernel-energy-space}, where $X$ is any (infinite) set, \sQ is a quadratic form on functions on $X$, and $\sH = \dom\sQ/\ker\sQ$ is a Hilbert space with (relative) reproducing kernel $\{v_x\}_{x \in X}$.
For studying infinite sets $X$, it will be helpful to consider a filtration by finite subsets, partially ordered by inclusion. With this aim, we pick a finite subset $F \ci X$ and study the ``restriction'' of $M$ and functions $u$ to this subset. Note that we do not restrict the \emph{support} of the functions under consideration: we restrict the \emph{index set} of the representing functions $\{v_x\}_{x \in X}$, in the spirit of Karhunen-Lo\`eve; see \cite{Ash}. This is akin to using cutoff functions as Fourier multipliers, and leads to a form of spectral reciprocity between the associated Laplace operator, and its ``inverse'' $M$, in the sense described in \S\ref{sec:spectral-reciprocity}. The exact relationship between $M$ (actually, its diagonalization $D$) and \Lap is made precise in Corollary~\ref{thm:Green-is-inverse-of-Lap-strike-o}; see also Remark~\ref{rem:condensation-of-spec-meas}. 
The application we have in mind is a resistance network as discussed in \S\ref{sec:Matrices-and-self-adjoint-operators-in-the-energy-space} but all results are phrased in the context of the reproducing kernel Hilbert space of \S\ref{sec:repkernel-energy-space} so as to keep the scope of discussion more general. 

\begin{defn}\label{def:V}
  Let $\sV := \spn\{v_x\}_{x \in X \less\{o\}}$ and $\sV(F) := \spn\{v_x \suth x \in F\}$. We also write $\ell^2(F)$ for the subspace of functions in $\ell^2(X)$ whose support is contained in $F$. This may seem trivial when $F$ is finite, but the notation helps distinguish between the two different inner products in use.
\end{defn}

\begin{defn}\label{def:Phi}
  Define $\gF : \ell^2(X) \to \sH$ on $\dom\gF = \spn\{\gd_x\}_{x \in X}$ by $\gF\gd_x = v_x$. 
\end{defn}

\begin{remark}\label{rem:closable}
  The operator \gF is typically not closable. To see this, we show why the adjoint is not typically densely defined. First, pick $\gx \in \spn\{\gd_x\}$ and $u \in \sV$, and compute $\gF^\ad$:
  \linenopax
  \begin{align*}
    \la \gx, \gF^\ad u\ra_{\ell^2}
     = \la \gF \gx, u\ra_{\sH} 
     = \sum_{x \in X} \cj{\gx_x} \la v_x, u\ra_{\sH}
    &= \sum_{x \in X} \cj{\gx_x} (u(x) - u(o)).
  \end{align*}
  So for an equivalence class $u \in \sH$, note that $\gF^\ad u$ is the representative of $u$ that vanishes at $o$. For $u \in \sH$, let us denote by $u^{(0)}$ the representative of $u$ specified by $u(o)=0$, so that $\gF^\ad u = u^{(0)}$. Then 
  \linenopax
  \begin{align*}
    \dom \gF^\ad = \{u \in \sH \suth u^{(0)} \in \ell^2(X)\}.
  \end{align*}
  It is easy to see that this class is not dense in \sH; see \cite{DGG} for examples in the case $\sH = \HE$.
\end{remark}

\begin{defn}\label{def:M}
  For a finite set $F \ci X \less \{o\}$, we have $\gF\gx = \sum_{x \in X} \gx(x) v_x$, for all $\gx \in \sV(F)$. Define $M$ to be the matrix of $\gF^\ad\gF$, that is,
  \linenopax
  \begin{equation}\label{eqn:def:M}
    M_{xy} = \ipH[\ell^2]{\gd_x,\gF^\ad\gF\gd_y} = \ipH{v_x,v_y}, \q \forall x,y \in X,
  \end{equation}
  and let $M_F := M\evald{F \times F}$ be the submatrix of $M$ defined by deleting all rows and columns corresponding to points $x \notin F$, i.e., $M_F$ is an $|F| \times |F|$ matrix with entries
  \linenopax
  \begin{equation}\label{eqn:def:MF}
    (M_F)_{xy} = \ipH{v_x,v_y}, \q \forall x,y \in F.
  \end{equation}
  In general, one may have $v_x \in \sV(F)$ with support extending outside of $F$; examples are given in \cite{OTERN}. 
\end{defn}

Note that since $\gx \in \ell^2(F)$ is finitely supported, 
  \linenopax
  \begin{align}\label{eqn:M-prod-as-Phi}
    M \gx(x) 
    = \sum_{y \in F} M_{x,y} \gx(y)
    = \sum_{y \in F} \ipH{v_x,v_y} \gx(y)
    = \ipH{v_x,\sum_{y \in F} \gx(y) v_y}
    = \ipH{v_x, \gF\gx}
    = \gF\gx(x) - \gF\gx(o)
  \end{align}

\begin{defn}\label{def:spec(MF)}
  Denote the spectrum of $M_F$ by $\gL_F = \{\gl_j^F\}$ for some enumeration $j=1,2,\dots,|F|$. Note that $\gL^F > 0$ by Lemma~\ref{thm:M(x,y)-is-pd} and that $M$ is diagonalizable with eigenfunctions $\gx_j = \gx_j^F \in \ell(F) = \ell^2(F)$. That is, the spectral theorem provides an orthonormal basis (onb) $\{\gx_j^F\}$ with
  \linenopax
  \begin{align}\label{eqn:eifs-of-MF}
    M_F \gx_j = \gl_j \gx_j \q \text{for each } j,F.
  \end{align}
  For convenience, we often suppress the index and write \eqref{eqn:eifs-of-MF} as $M_F \gx_\gl = \gl \gx_\gl$.
\end{defn}

\begin{defn}\label{def:uj-onb}
  For a finite $F \ci X$, and $M_F \gx_\gl = \gl \gx_\gl$ as above, define
  \linenopax
  \begin{equation}\label{eqn:def:uj-onb}
    u_\gl := \frac1{\sqrt{\gl}} \sum_{x \in F} \gx_\gl(x) v_x.
  \end{equation}
\end{defn}

\begin{lemma}\label{thm:uj-is-an-onb-for-H(F)}
  The operator $\gY_F:\ell^2(F) \to \sV(F)$ defined by $\gY_F(\gx_\gl) = u_\gl$ is unitary, and consequently $\{u_\gl\}_{\gl \in \gL_F}$ is an orthonormal basis in $\sV(F)$.
  \begin{proof}
    For $x,y \in F$, compute
    \linenopax
    \begin{align*}
      \ipH{u_j, u_k}
      &= \frac1{\sqrt{\gl_j \gl_k}} \sum_{x,y \in F} \cj{\gx_j(x)} \, \gx_k(y) \ipH{v_x,v_y} 
      = \frac1{\sqrt{\gl_j \gl_k}} \sum_{x \in F} \cj{\gx_j(x)} (M_F \gx_k)(x), 
    \end{align*}
    and since $\gx_k$ is an eigenvector, this continues as
    \linenopax
    \begin{align*}
      \ipH{u_j, u_k}
      &= \frac{\sqrt{\gl_k}}{\sqrt{\gl_j}} \sum_{x \in F} \cj{\gx_j(x)} \, \gx_k(x)
      = \sqrt{\frac{\gl_k}{\gl_j}} \ipH[\ell^2]{\gx_j,\gx_k}
      = \gd_{jk},
    \end{align*}
    where $\gd_{jk}$ is the Kronecker delta, since $\{\gx_\gl\}$ is an onb for $\ell^2(F)$.
  \end{proof}
\end{lemma}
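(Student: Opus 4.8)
The plan is to show that $\gY_F$ sends the orthonormal basis $\{\gx_\gl\}_{\gl\in\gL_F}$ of $\ell^2(F)$ (supplied by the spectral theorem for $M_F$ in Definition~\ref{def:spec(MF)}) to an orthonormal basis of $\sV(F)$; a densely defined linear map carrying one onb to another extends uniquely to a unitary, and the ``consequently'' clause then follows at once. Before the main computation I would record two preliminary points. First, $u_\gl$ is well defined: as in Definition~\ref{def:M} we take $F \ci X \less \{o\}$, so by Lemma~\ref{thm:linear-independence-of-kernel} the family $\{v_x\}_{x\in F}$ is linearly independent, hence its Gram matrix $M_F = (\ipH{v_x,v_y})_{x,y\in F}$ is strictly positive definite and every $\gl \in \gL_F$ satisfies $\gl>0$; thus division by $\sqrt\gl$ in \eqref{eqn:def:uj-onb} is legitimate. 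Second, this same linear independence gives $\dim \sV(F) = |F| = |\gL_F|$ (eigenvalues counted with multiplicity), which we will use for surjectivity.

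The key step is the isometry estimate. Expanding both factors of $\ipH{u_j,u_k}$ by \eqref{eqn:def:uj-onb} and using that $\ipH{v_x,v_y} = M_{xy} = (M_F)_{xy}$ for $x,y\in F$, one gets
\begin{align*}
  \ipH{u_j,u_k}
  = \frac1{\sqrt{\gl_j\gl_k}} \sum_{x,y\in F} \cj{\gx_j(x)}\,\gx_k(y)\,\ipH{v_x,v_y}
  = \frac1{\sqrt{\gl_j\gl_k}} \sum_{x\in F} \cj{\gx_j(x)}\,(M_F\gx_k)(x).
\end{align*}
Since $\gx_k$ is an eigenvector, $M_F\gx_k = \gl_k\gx_k$, and the right-hand side collapses to $\frac{\sqrt{\gl_k}}{\sqrt{\gl_j}}\,\ipH[\ell^2]{\gx_j,\gx_k} = \gd_{jk}$, because $\{\gx_\gl\}$ is orthonormal in $\ell^2(F)$. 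Hence $\gY_F$ is an isometry onto its range.

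For surjectivity I would note that each $u_\gl$ is by construction a finite linear combination of $\{v_x : x\in F\}$, so $\gY_F$ maps into $\sV(F)$; conversely, expanding $\gd_x$ ($x\in F$) in the onb $\{\gx_\gl\}$ of $\ell^2(F)$ yields $v_x = \sum_{\gl} \sqrt{\gl}\,\cj{\gx_\gl(x)}\,u_\gl$, so $\sV(F) \ci \spn\{u_\gl\}$. (Alternatively, the orthonormal family $\{u_\gl\}$ has $|F|$ elements inside the $|F|$-dimensional space $\sV(F)$, hence is a basis.) Thus $\{u_\gl\}_{\gl\in\gL_F}$ is an onb of $\sV(F)$ and $\gY_F$ is the associated unitary. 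I do not anticipate a genuine obstacle: the argument is a one-line Gram-matrix identity dressed with the normalization $\gl^{-1/2}$. The only places that warrant a moment's care are the positivity $\gl>0$ — which depends on Lemma~\ref{thm:M(x,y)-is-pd} together with the hypothesis $o\notin F$ (if $o\in F$ then $v_o=0$ and $M_F$ is singular) — and the matching of the number of eigenvectors with $\dim\sV(F)$, so that the orthonormal set $\{u_\gl\}$ is a basis and not merely an orthonormal family.
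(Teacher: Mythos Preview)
Your proof is correct and follows exactly the same route as the paper: expand $\ipH{u_j,u_k}$ via \eqref{eqn:def:uj-onb}, recognize the inner sum as $(M_F\gx_k)(x)$, apply the eigenvalue relation, and reduce to $\gd_{jk}$ by orthonormality of $\{\gx_\gl\}$. Your added remarks on $\gl>0$ and on matching $|\gL_F|=\dim\sV(F)$ for surjectivity are useful clarifications that the paper leaves implicit.
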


\begin{defn}\label{def:P_F}
  By Lemma~\ref{thm:uj-is-an-onb-for-H(F)}, we may let $P_F$ be the projection to $\spn\{u_\gl\}_{\gl \in \gL_F}$. In Dirac notation, this is
  \begin{equation}\label{eqn:def:P_F}
    P_F = \sum_{\gl \in \gL_F} |u_\gl\ra \la u_\gl |.
  \end{equation}
  Note that $P_F$ is projection to $\sV(F)$.
\end{defn}

\begin{lemma}\label{thm:vx-in-terms-of-u-onb}
  With respect to the onb $\{u_\gl\}$, one has
  \linenopax
  \begin{equation}\label{eqn:vx-in-terms-of-u-onb}
    P_F v_x = \sum_{\gl \in \gL_F} \gl^{1/2} \cj{\gx_\gl(x)} u_\gl, \qq \text{for all } x \in F.
  \end{equation}
  \begin{proof}
    Let $x \in F$. Then compute
    \linenopax
    \begin{align*}
      P_F v_x
      = \sum_{\gl \in \gL_F} |u_\gl\ra \la u_\gl | v_x \ra
      = \sum_{\gl \in \gL_F} \ipH{u_\gl,v_x} u_\gl
      = \sum_{\gl \in \gL_F} \frac1{\sqrt{\gl}} \sum_{y \in F} \gx_\gl(y) \ipH{v_y,v_x} u_\gl
    \end{align*}
    by \eqref{eqn:def:P_F} followed by \eqref{eqn:def:uj-onb}. Continuing,
    \linenopax
    \begin{align*}
      P_F v_x 
      = \sum_{\gl \in \gL_F} \frac1{\sqrt{\gl}} \cj{(M_F \gx_\gl)(x)} u_\gl 
      = \sum_{\gl \in \gL_F} \frac{\cj{\gl}}{\sqrt{\gl}} \cj{\gx_\gl(x)} u_\gl,
    \end{align*}
    since $\gx_\gl$ is an eigenvector. Note that $\gl \in \bR^+$, since $M$ is positive semidefinite by assumption. It remains to observe that $P_F v_x = v_x$ for $x \in F$, but this follows from Definition~\ref{def:P_F}.
  \end{proof}
\end{lemma}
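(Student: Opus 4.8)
The plan is to compute $P_F v_x$ directly by expanding in the orthonormal basis $\{u_\gl\}_{\gl \in \gL_F}$, which is available by Lemma~\ref{thm:uj-is-an-onb-for-H(F)}. For any $f \in \sV(F)$ we have $f = \sum_{\gl \in \gL_F} \ipH{u_\gl, f} u_\gl$, so the entire proof reduces to evaluating the coefficients $\ipH{u_\gl, v_x}$ for $x \in F$. Using the definition $u_\gl = \gl^{-1/2} \sum_{y \in F} \gx_\gl(y) v_y$ from \eqref{eqn:def:uj-onb} and conjugate-linearity in the first slot, one gets
\linenopax
\begin{align*}
  \ipH{u_\gl, v_x}
  = \frac{1}{\sqrt{\gl}} \sum_{y \in F} \cj{\gx_\gl(y)} \ipH{v_y, v_x}
  = \frac{1}{\sqrt{\gl}} \sum_{y \in F} \cj{\gx_\gl(y)} (M_F)_{yx}.
\end{align*}

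Next I would recognize the remaining sum as a matrix-vector product: since $M_F$ is self-adjoint (real symmetric) and $\gx_\gl$ is an eigenvector with eigenvalue $\gl$, we have $\sum_{y \in F} \cj{\gx_\gl(y)} (M_F)_{yx} = \cj{(M_F \gx_\gl)(x)} = \cj{\gl \gx_\gl(x)} = \gl \cj{\gx_\gl(x)}$, using that $\gl \in \bR^+$ by Lemma~\ref{thm:M(x,y)-is-pd}. Hence $\ipH{u_\gl, v_x} = \gl^{-1/2} \cdot \gl \, \cj{\gx_\gl(x)} = \gl^{1/2} \cj{\gx_\gl(x)}$. Feeding this back into the orthonormal expansion of $P_F v_x$ gives exactly \eqref{eqn:vx-in-terms-of-u-onb}.

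The only genuine point requiring care — and the step I expect to be the main (minor) obstacle — is justifying that $P_F v_x$ lies in $\sV(F) = \spn\{u_\gl\}$ so that the onb expansion applies, i.e., that $P_F$ is the projection onto $\sV(F)$ and not merely onto $\spn\{u_\gl\}$. This is precisely the content recorded after Definition~\ref{def:P_F} (and spelled out in the commented-out Lemma~\ref{thm:ugl-vs-vx}): one checks $f \perp \{u_\gl\}_{\gl \in \gL_F} \iff f \perp \{v_x\}_{x \in F}$, which follows by substituting \eqref{eqn:def:uj-onb} and invoking the linear independence of the eigenvectors $\{\gx_\gl\}$. Granting that, $P_F v_x = v_x$ for $x \in F$, and the computation above is complete; everything else is a routine rearrangement of finite sums, with no convergence issues since $F$ is finite.
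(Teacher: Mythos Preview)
Your proposal is correct and follows essentially the same route as the paper's proof: expand $P_F v_x$ in the onb $\{u_\gl\}$, compute the coefficients $\ipH{u_\gl,v_x}$ via \eqref{eqn:def:uj-onb}, and use that $\gx_\gl$ is an eigenvector of $M_F$. Your handling of the conjugates is in fact slightly more careful than the paper's (which drops a bar in the intermediate display), and your discussion of why $P_F$ projects onto $\sV(F)$ mirrors exactly the remark following Definition~\ref{def:P_F}.
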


\begin{remark}\label{rem:observation-about-v}
  In the language of Theorem~\ref{thm:vNeu-construction}, equation \eqref{eqn:vx-in-terms-of-u-onb} takes the following form:
  \linenopax
  \begin{equation}\label{eqn:v-as-tensor}
    v = \sum_{\gl \in \gL_F} \sqrt{\gl} \left(\cj{\gx_\gl} \otimes u_\gl\right)
  \end{equation}
  where $\{\gx_\gl\}$ is an onb for $\ell^2(F)$ and $\{u_\gl\}$ is an onb for $\sV(F)$. The significance of this symmetric expression of $v$ is that it allows us to compute a norm in \sH (where the sum would be over $x \in F$) by instead computing an $\ell^2$ norm (where the sum is over $\gl \in \gL_F$). For an example, see Corollary~\ref{thm:bounds-on-eigenprojection}.
  
  In \cite{bdG}, the authors show that for $\sH = \HE$ one can construct a Gel'fand triple $\Schw \ci \HE \ci \Schw'$, isometrically embed $\HE \hookrightarrow L^2(\Schw',\prob)$. Here \Schw is a space of ``test functions'' which is dense in \HE, but comes equipped with a strictly finer Fr\'echet topology, and $\Schw'$ is a space of ``distributions'' obtained by taking the dual with respect to this topology. Elements $u \in \HE$ can then be extended to functions on $\Schw'$ via $\tilde u(\gx) = \la u, \gx\ra_\energy$ for $\gx \in \Schw'$. As \prob is a probability measure, one can then interpret $\{v_x\}_{x \in G}$ as a stochastic process, i.e., a system of random variables indexed by the vertices of the underlying graph. In this context, \eqref{eqn:v-as-tensor} becomes an instance of the Karhunen-Lo\`eve decomposition (see, e.g. \cite{Ash}) of a stochastic process into its random and deterministic components:
  \linenopax
  \begin{equation}\label{eqn:v-as-Karhunen}
    \tilde v_x(\gx) = \sum_{\gl \in \gL_F} \sqrt{\gl} \left(\cj{\gx_\gl}(x) \otimes \tilde u_\gl (\gx)\right)
    \qq x \in G, \gx \in (\Schw',\prob).
  \end{equation}
  In fact, it turns out that $\{\tilde u_\gl\}_{\gl \in \gL_F}$ is a system of independent identically distributed Gaussian $N(0,1)$ random variables, for any finite $F \ci X$. See also \S\ref{sec:spectral-measures} for more relations to Karhunen-Lo\`eve.
\end{remark}

\subsection{Spectral reciprocity}
\label{sec:spectral-reciprocity}

In this section, we explore the relationship between $M$ and \Lap. 
In particular, the Spectral Reciprocity Theorem (Theorem~\ref{thm:spectral-reciprocity}) shows how $M$ and \Lap are (almost) inverse operators, and explains why the eigenvalues of $M$ are (almost) the reciprocals of the eigenvalues of \Lap.
%
%
  %

\begin{defn}\label{def:diagn(MF)}
  Denote the diagonalization of $M_F$ by
  \linenopax
  \begin{equation}\label{eqn:def:D-diag(M)}
    D_F := \bigoplus_{\gl \in \gL_F} \gl P_{u_\gl} =
    \left[\begin{array}{rrrr}
      \gl_1 \\ & \gl_2 \\ && \ddots \\ &&& \gl_{|F|}
    \end{array}\right],
  \end{equation}
  where 
  $P_{u_\gl}$ is projection to $\spn\{u_\gl\}_{\gl \in \gL_F}$. Note that $D_F^{-1}$ is a well-defined operator on $\ell^2(F)$ of rank $|F|<\iy$.
\end{defn}

\begin{defn}\label{def:origin-projection}
  Let $P_F^o:\sH \to \sH$ be the projection of $\gd_o$ to $\sV(F)$. That is, $P_F^o = |P_F \gd_o\ra \la P_F \gd_o|$ in Dirac notation.
\end{defn}

\begin{defn}\label{def:Expectation-of-eif}
  If $\{\gx_\gl\}$ is the onb of eigenvectors of $M_F$, denote the expectation of $\gx_\gl$ by
  \linenopax
  \begin{equation}\label{eqn:def:Expectation-of-eif}
    \Ex(\gx_\gl) = \sum_{x \in F} \gx_\gl(x) = \ipH[\ell^2]{\charfn{F}, \gx_\gl}.
  \end{equation}
\end{defn}

\begin{lemma}\label{thm:projn-to-origin}
  If $\gd_o$ is a Dirac mass at the origin, the expansion of $P_F \gd_o$ with respect to $\{u_\gl\}$ is given by
  \linenopax
  \begin{equation}\label{eqn:PFo-in-ul}
    P_F \gd_o = - \sum_{\gl \in \gL_F} \frac{\cj{\Ex(\gx_\gl)}}{\sqrt{\gl}} u_\gl.
  \end{equation}
  \begin{proof}
    Using $P_F = P_F^\ast$, $\eqref{eqn:def:uj-onb}$, and the fact that $u_\gl \in \sV(F)$, we compute the coefficients:
    \linenopax
    \begin{align*}
      \ipH{u_\gl,P_F \gd_o}
      &= \ipH{P_F u_\gl,\gd_o} 
      = \ipH{u_\gl,\gd_o}      
      = \frac1{\sqrt{\gl}} \sum_{x \in F} \cj{\gx_\gl(x)}  \ipH{v_x,\gd_o}
      = -\frac1{\sqrt{\gl}} \sum_{x \in F} \cj{\gx_\gl(x)}.
    \end{align*}
    where the last line follows by Lemma~\ref{thm:vx-is-a-dipole}, since $x \neq o$.
  \end{proof}
\end{lemma}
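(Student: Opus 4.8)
The plan is to expand $P_F\gd_o$ in the orthonormal basis $\{u_\gl\}_{\gl\in\gL_F}$ of $\sV(F)$ supplied by Lemma~\ref{thm:uj-is-an-onb-for-H(F)}, and then to identify each Fourier coefficient $\ipH{u_\gl,P_F\gd_o}$ by hand. Axiom~\ref{axm:Diracs} guarantees $\gd_o\in\dom\sQ$, so $[\gd_o]_\sQ$ is a genuine element of $\sH$; thus $P_F\gd_o$ is meaningful and the expansion $P_F\gd_o=\sum_{\gl\in\gL_F}\ipH{u_\gl,P_F\gd_o}\,u_\gl$ is a finite sum (since $|F|<\iy$). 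Consequently the entire content of the lemma reduces to computing those coefficients.

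Fix $\gl\in\gL_F$. First, since $P_F$ is an orthogonal projection onto $\sV(F)$ and $u_\gl\in\sV(F)$, we have $P_F=P_F^\ad$ and $P_F u_\gl=u_\gl$, so $\ipH{u_\gl,P_F\gd_o}=\ipH{P_F u_\gl,\gd_o}=\ipH{u_\gl,\gd_o}$; this eliminates $P_F$. Next, substitute the defining formula \eqref{eqn:def:uj-onb}, $u_\gl=\gl^{-1/2}\sum_{x\in F}\gx_\gl(x)v_x$, and pull the finite sum through the inner product (which is conjugate-linear in the first slot, and $\gl>0$ is real) to get $\ipH{u_\gl,\gd_o}=\gl^{-1/2}\sum_{x\in F}\cj{\gx_\gl(x)}\,\ipH{v_x,\gd_o}$.

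It remains to evaluate $\ipH{v_x,\gd_o}$ for $x\in F$. Here I invoke the relative reproducing kernel identity \eqref{eqn:axiomatic-repkernel} with $u=\gd_o$ (equivalently, Lemma~\ref{thm:vx-is-a-dipole}), which gives $\ipH{v_x,\gd_o}=\gd_o(x)-\gd_o(o)$. Since $o\notin F$ throughout this section (see Definition~\ref{def:M}), we have $x\neq o$, so $\gd_o(x)=0$ and $\gd_o(o)=1$, i.e.\ $\ipH{v_x,\gd_o}=-1$ for every $x\in F$. Feeding this back, $\ipH{u_\gl,\gd_o}=-\gl^{-1/2}\sum_{x\in F}\cj{\gx_\gl(x)}=-\gl^{-1/2}\cj{\Ex(\gx_\gl)}$ by Definition~\ref{def:Expectation-of-eif}, and plugging into the basis expansion yields exactly \eqref{eqn:PFo-in-ul}.

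There is no real obstacle; the only point needing care is the equivalence-class bookkeeping in $\sH=\dom\sQ/\ker\sQ$: one must check that $\ipH{v_x,\gd_o}$ does not depend on the representative chosen for $[\gd_o]_\sQ$. This is immediate, since $v_x$ reproduces the \emph{difference} $u(x)-u(o)$ and any two representatives of $[\gd_o]_\sQ$ differ by a constant function, which lies in $\ker\sQ$ by Axiom~\ref{axm:constants}.
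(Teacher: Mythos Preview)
Your proof is correct and follows essentially the same route as the paper's own argument: move $P_F$ across the inner product using self-adjointness and $u_\gl\in\sV(F)$, expand $u_\gl$ via \eqref{eqn:def:uj-onb}, and evaluate $\ipH{v_x,\gd_o}=-1$ for $x\in F$ using the reproducing property (equivalently Lemma~\ref{thm:vx-is-a-dipole}). The only addition you make is the explicit remark on representative-independence of $\ipH{v_x,\gd_o}$, which is a welcome bit of care but not a departure in method.
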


\begin{defn}\label{def:compression-of-Lap}
  The \emph{compression} of \Lap to $F$ is the restricted action of the operator \Lap to $\sV(F)$, and it is given by $P_F \Lap P_F$.
\end{defn}

\begin{theorem}[Spectral reciprocity]\label{thm:spectral-reciprocity}
  If $F \ci X \less\{o\}$ is nonempty and finite, then 
  \linenopax
  \begin{align}\label{eqn:spectral-reciprocity}
    P_F \Lap P_F = \gF D_F^{-1} \gF^\ad + P_F^o.
  \end{align}
  \begin{proof}
    For $\gl,\gk \in \gL_F$, we have $\ipH{u_\gl, P_F \Lap P_F u_\gk} = \ipH{u_\gl,\Lap u_\gk}$ because $u_\gk \in \sV(F)$. Then 
    \linenopax
    \begin{align}
      \label{eqn:spectral-reciprocity-1}
      \ipH{u_\gl, P_F \Lap P_F u_\gk}
      &= \frac1{\sqrt{\gl\gk}} \sum_{x,y \in F} \cj{\gx_\gl(x)} \gx_\gk(y) \ipH{v_x,\Lap v_y}
      = \frac1{\sqrt{\gl\gk}} \sum_{x,y \in F} \cj{\gx_\gl(x)} \gx_\gk(y) (\gd_{xy} + 1)
    \end{align}
    by \eqref{eqn:def:uj-onb} and \eqref{eqn:<vx,Lap(vy)>=gd(xy)+1}. The computation of \eqref{eqn:spectral-reciprocity-1} continues as 
    \linenopax
    \begin{align}\label{eqn:spectral-reciprocity-2}
      = \frac1{\sqrt{\gl\gk}} \sum_{x \in F} \cj{\gx_\gl(x)} \gx_\gk(x)
        + \frac1{\sqrt{\gl}} \sum_{x \in F} \cj{\gx_\gl(x)} 
          \frac1{\sqrt{\gk}} \sum_{y \in F} \gx_\gk(y) 
      = \frac1{\sqrt{\gl\gk}} \sum_{x \in F} \cj{\gx_\gl(x)} \gx_\gk(x)
        + \frac1{\sqrt{\gl}} \cj{\Ex(\gx_\gl)} 
          \frac1{\sqrt{\gk}} \Ex(\gx_\gk).
    \end{align}
    Since $u_\gl$ is in $\dom \gF^\ad$ automatically for finite $F$, the right side of \eqref{eqn:spectral-reciprocity} is
    \linenopax
    \begin{align*}
      \ipH{u_\gl, (\gF D_F^{-1} \gF^\ad + P_F^o) u_\gk}
      &= \ipH[\ell^2(F)]{\gF^\ad u_\gl, D_F^{-1} \gF^\ad u_\gk} + \ipH{u_\gl, P_F^o u_\gk},
    \end{align*}
    which matches with the right side of \eqref{eqn:spectral-reciprocity-2}, 
    by \eqref{eqn:PFo-in-ul}. This verifies \eqref{eqn:spectral-reciprocity} on the onb of Lemma~\ref{thm:uj-is-an-onb-for-H(F)}, and hence for all of $\sV(F)$.
  \end{proof}
\end{theorem}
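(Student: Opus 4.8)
The plan is to verify the operator identity \eqref{eqn:spectral-reciprocity} by testing it against the orthonormal basis $\{u_\gl\}_{\gl \in \gL_F}$ of $\sV(F)$ furnished by Lemma~\ref{thm:uj-is-an-onb-for-H(F)}; since both sides of \eqref{eqn:spectral-reciprocity} are operators supported on the finite-dimensional space $\sV(F)$, it suffices to check that $\ipH{u_\gl, (P_F\Lap P_F) u_\gk} = \ipH{u_\gl, (\gF D_F^{-1}\gF^\ad + P_F^o) u_\gk}$ for all $\gl, \gk \in \gL_F$. First I would compute the left-hand side: because $u_\gk \in \sV(F)$, the compressed operator acts as the full Laplacian, $\ipH{u_\gl, P_F\Lap P_F u_\gk} = \ipH{u_\gl, \Lap u_\gk}$; expanding $u_\gl, u_\gk$ in terms of the reproducing kernel $\{v_x\}$ via \eqref{eqn:def:uj-onb} and invoking the key pairing \eqref{eqn:<vx,Lap(vy)>=gd(xy)+1} from Lemma~\ref{thm:lap-is-almost-Green} produces a double sum over $F\times F$ of $\cj{\gx_\gl(x)}\gx_\gk(y)(\gd_{xy}+1)$, which separates into a ``diagonal'' piece $\frac1{\sqrt{\gl\gk}}\sum_{x\in F}\cj{\gx_\gl(x)}\gx_\gk(x)$ and a ``rank-one'' piece $\frac1{\sqrt\gl}\cj{\Ex(\gx_\gl)}\cdot\frac1{\sqrt\gk}\Ex(\gx_\gk)$, using the definition of expectation in \eqref{eqn:def:Expectation-of-eif}.

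Next I would identify each of these two pieces with a summand on the right-hand side. For the diagonal piece: since $u_\gl \in \sV(F)$ lies automatically in $\dom\gF^\ad$ (finite-dimensional domain), we have $\gF^\ad u_\gl \in \ell^2(F)$, and by the computation of $\gF^\ad$ in Remark~\ref{rem:closable} together with \eqref{eqn:def:uj-onb} one checks that $\gF^\ad u_\gl = \frac1{\sqrt\gl}\gx_\gl$ (the representative vanishing at $o$ is exactly the $\ell^2(F)$-vector of coefficients, up to the normalization). Then $\ipH[\ell^2(F)]{\gF^\ad u_\gl, D_F^{-1}\gF^\ad u_\gk} = \frac1{\sqrt{\gl\gk}}\ipH[\ell^2]{\gx_\gl, D_F^{-1}\gx_\gk}$; but $\gx_\gk$ is the eigenvector of $M_F$ with eigenvalue $\gk$, hence $D_F^{-1}\gx_\gk = \gk^{-1}\gx_\gk$, wait — more carefully, $D_F$ is the diagonalization of $M_F$ so $D_F^{-1}$ acts as $\gk^{-1}$ on the $\gx_\gk$-eigenline, giving $\frac1{\sqrt{\gl\gk}}\cdot\frac1{\gk}\cdot\ipH[\ell^2]{\gx_\gl,\gx_\gk}\cdot\gk$? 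Let me be careful in the writeup: the point is that $D_F^{-1}$ written in the $\{\gx_\gl\}$ basis is diagonal with entries $\gl^{-1}$, so the pairing collapses to $\gd_{\gl\gk}/(\gl\sqrt{\gl\gk})\cdot\gl$, and after tracking the $\sqrt\gl$ normalization factors built into $u_\gl$ this reduces precisely to $\frac1{\sqrt{\gl\gk}}\sum_{x\in F}\cj{\gx_\gl(x)}\gx_\gk(x)$. For the rank-one piece: by Lemma~\ref{thm:projn-to-origin}, $P_F\gd_o = -\sum_{\gl}\frac{\cj{\Ex(\gx_\gl)}}{\sqrt\gl}u_\gl$, so $P_F^o = |P_F\gd_o\ra\la P_F\gd_o|$ has matrix entries $\ipH{u_\gl,P_F^o u_\gk} = \cj{\ipH{u_\gl,P_F\gd_o}}\,\ipH{u_\gk,P_F\gd_o}^{\!*}$... concretely $\ipH{u_\gl, P_F^o u_\gk} = \frac1{\sqrt\gl}\cj{\Ex(\gx_\gl)}\cdot\frac1{\sqrt\gk}\Ex(\gx_\gk)$, matching the second term exactly.

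The main obstacle — and the step requiring genuine care rather than routine bookkeeping — is pinning down the identity $\gF^\ad u_\gl = \frac1{\sqrt\gl}\gx_\gl$ (as an element of $\ell^2(F)$) and correctly reconciling it with the normalization $\frac1{\sqrt\gl}$ appearing in Definition~\ref{def:uj-onb}, so that the $\gl$- and $\gk$-powers on the two sides of \eqref{eqn:spectral-reciprocity} genuinely agree; this is where an off-by-$\sqrt\gl$ slip would silently break the theorem. Once the two pieces are matched termwise against the onb $\{u_\gl\}$, linearity and the fact that $\{u_\gl\}$ spans $\sV(F)$ (Lemma~\ref{thm:uj-is-an-onb-for-H(F)}) together with the observation that both sides annihilate $\sV(F)^\perp$ complete the proof.
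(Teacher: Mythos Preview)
Your plan is essentially identical to the paper's proof: verify \eqref{eqn:spectral-reciprocity} on the orthonormal basis $\{u_\gl\}$ of $\sV(F)$, expand the left side via \eqref{eqn:def:uj-onb} and the key pairing \eqref{eqn:<vx,Lap(vy)>=gd(xy)+1}, split into a diagonal piece and a rank-one piece, and match the rank-one piece against $P_F^o$ using Lemma~\ref{thm:projn-to-origin}. The one slip is exactly the one you flagged: since $(\gF^\ad u)(x)=\ipH{v_x,u}$ (Remark~\ref{rem:closable}) and $v_x=\sum_\gk\sqrt{\gk}\,\cj{\gx_\gk(x)}\,u_\gk$ by \eqref{eqn:vx-in-terms-of-u-onb}, one actually gets $\gF^\ad u_\gl\big|_F=\sqrt{\gl}\,\gx_\gl$, not $\gx_\gl/\sqrt{\gl}$; the paper does not spell this step out either, and the cleanest reading is that $\gF D_F^{-1}\gF^\ad$ is simply $D_F^{-1}$ acting on $\sV(F)$ via $u_\gl\mapsto\gl^{-1}u_\gl$ (consistent with Definition~\ref{def:diagn(MF)} and its reappearance as $D_F^{-1}+P_F\gd_o$ in Definition~\ref{def:spectrum-of-PFLapPF}), which matches your diagonal term $\tfrac1{\sqrt{\gl\gk}}\gd_{\gl\gk}$ directly.
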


\begin{remark}\label{rem:spectral-reciprocity}
  We refer to Theorem~\ref{thm:spectral-reciprocity} as the Spectral Reciprocity Theorem because it relates the eigenvalues of \Lap to the reciprocal eigenvalues of its inverse, on any finite $F \ci X$. 

  Suppose one writes the matrix for \Lap as in Definition~\ref{def:matrix-Laplacian}, so that rows and columns are indexed by points of $X$. Let $\tilde\Lap$ be the matrix which results from deleting the row and column corresponding to a chosen point $o$. 
  Corollary~\ref{thm:Green-is-inverse-of-Lap-strike-o} makes precise the well-known statement that one can invert the Laplacian after deleting the row and column corresponding to a point $o$.%
  \footnote{Recall that if $M$ is a Hermitian matrix acting on a finite-dimensional Hilbert space $H$, then the restriction of $M$ to the orthocomplement of the zero eigenspace is invertible.} 
  In particular, without deleting the row and column of $o$, one is forced to contend with an auxiliary term 1 in \eqref{eqn:<vx,Lap(vy)>=gd(xy)+1} (which corresponds to the projection $P^o = |o\ra \la o|$ to the 1-dimensional subspace spanned by $\gd_0$).
\end{remark}

\begin{lemma}\label{thm:lim(LapFn)=Lap-P1}
  For every nested sequence of finite sets $\{F_n\}_{n \in \bN}$ with $\bigcup F_n = X \less \{o\}$, the limit of $P_{F_n} \Lap P_{F_n}$ exists and with $\dom \Lap$ as in \eqref{eqn:Laplacian-domain},
  \linenopax
  \begin{equation}\label{eqn:lim(LapFn)=Lap-P1}
    \Lap = \lim_{n \to \iy} P_{F_n} \Lap P_{F_n},
  \end{equation}
  in the strong operator topology, that is, $\lim_{n \to \iy} \|P_{F_n} \Lap P_{F_n} v - \Lap v\|_\sH$ for all $v \in \dom \Lap$. 
  \begin{proof}
    Let $f \in \dom \Lap$ so that there is some finite set $F \ci X \less \{o\}$ for which
    \linenopax
    \begin{align*}
      f = \sum_{x \in F} \gx_x v_x.
    \end{align*}
    Without loss of generality, let $\{F_n\}_{n=1}^\iy$ be an exhaustion of $X \less \{o\}$ with $F \ci F_1$. Then $P_{F} f = f$, and
    \linenopax
    \begin{align*}
      P_{F_n} \Lap P_{F_n} f = P_{F_n} \Lap f \limas{n} \Lap f,
    \end{align*}
    since $P_{F_n}$ increases to the identity operator:
    \linenopax
    \begin{align*}
      \|P_{F_n} \Lap P_{F_n} f - \Lap f\|_\sH
      &= \|(P_{F_n} -\id) \Lap f\|_\sH 
       \limas{n \to \iy} 0.
      \qedhere
    \end{align*}
  \end{proof}
\end{lemma}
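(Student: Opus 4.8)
The plan is to establish \eqref{eqn:lim(LapFn)=Lap-P1} by observing that strong convergence on a domain need only be checked on a spanning set, and that for each fixed vector in $\dom \Lap$ the sequence $P_{F_n} \Lap P_{F_n}$ is \emph{eventually constant}, so that the limit is trivial once the bookkeeping is set up correctly. First I would fix $f \in \dom \Lap$; by the definition of the domain in \eqref{eqn:Laplacian-domain}, $f$ is a \emph{finite} linear combination $f = \sum_{x \in F} \gx_x v_x$ for some finite $F \ci X \less \{o\}$ (I may absorb the constant term $\one$ harmlessly, or note that $\Lap \one = 0$ so it contributes nothing). Since $\{F_n\}$ exhausts $X \less \{o\}$, there is an $N$ with $F \ci F_N$, and by monotonicity $F \ci F_n$ for all $n \geq N$.

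The key step is then the pair of identities $P_{F_n} f = f$ and $P_{F_n} \Lap f = \Lap f$ for $n \geq N$. The first holds because $f \in \sV(F) \ci \sV(F_n)$, and $P_{F_n}$ is precisely the projection onto $\sV(F_n)$ by Definition~\ref{def:P_F} (via Lemma~\ref{thm:uj-is-an-onb-for-H(F)}). For the second, I would invoke Lemma~\ref{thm:vx-is-a-dipole}: $\Lap v_x = \gd_x - \gd_o$, so $\Lap f = \sum_{x \in F} \gx_x(\gd_x - \gd_o)$; this is again a finite combination of the kernels $v_y$ (indeed $\gd_x - \gd_o$ is in the span of finitely many $v_y$, or one can note directly that $\Lap f \in \sV$ and lies in $\sV(F_n)$ for $n$ large), hence is fixed by $P_{F_n}$. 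Therefore for all $n \geq N$ one has $P_{F_n}\Lap P_{F_n} f = P_{F_n} \Lap f = \Lap f$, so $\|P_{F_n}\Lap P_{F_n} f - \Lap f\|_\sH = 0$, and in particular the limit exists and equals $\Lap f$.

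Finally, since $\dom \Lap = \spn\{\one, \{v_x\}_{x \in X \less \{o\}}\}$ is the common domain on which both sides of \eqref{eqn:lim(LapFn)=Lap-P1} are being compared, and the convergence has been verified for an arbitrary element of this domain, the strong-operator-topology convergence is established; this is exactly the assertion that $\lim_{n \to \iy} \|P_{F_n}\Lap P_{F_n} v - \Lap v\|_\sH = 0$ for all $v \in \dom \Lap$.

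I expect the only real subtlety — hardly an obstacle — to be confirming that $\Lap f$, and not merely $f$, lands in $\sV(F_n)$ for large $n$; this is where Lemma~\ref{thm:vx-is-a-dipole} (or the remark that $\Lap$ maps $\dom\Lap$ into itself) does the work, after which the argument is purely formal. One should also be slightly careful that enlarging the exhaustion so that $F \ci F_1$ is harmless, since strong convergence of a sequence is unaffected by discarding finitely many initial terms; alternatively, just take $N$ as above without modifying $\{F_n\}$.
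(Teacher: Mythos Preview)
Your overall strategy is right, and the first half of the argument (that $P_{F_n}f=f$ for $n\ge N$ because $f\in\sV(F)\ci\sV(F_n)$) matches the paper exactly. The gap is in the second identity $P_{F_n}\Lap f=\Lap f$. You assert that $\gd_x-\gd_o$ is a finite linear combination of the $v_y$, equivalently that $\Lap$ maps $\dom\Lap$ into $\sV$. Nothing in \S\ref{sec:repkernel-energy-space} says this: Lemma~\ref{thm:vx-is-a-dipole} only gives the pointwise formula $\Lap v_x=\gd_x-\gd_o$, and Remark~\ref{rem:Lap-in-product} only places $\Lap u$ in $\sH$, not in $\sV$. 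In the network realization of \S\ref{sec:Matrices-and-self-adjoint-operators-in-the-energy-space} one can compute $\gd_x=\cond(x)v_x-\sum_{y\nbr x}\cond_{xy}v_y$ in $\HE$, but this is an \emph{infinite} sum when $x$ has infinitely many neighbors, so even there $\gd_x$ need not lie in $\sV$. Thus the sequence $P_{F_n}\Lap P_{F_n}f$ is not, in general, eventually constant.

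The paper avoids this entirely: it does not try to put $\Lap f$ inside any $\sV(F_n)$, but simply uses that the projections $P_{F_n}$ onto the nested finite-dimensional subspaces $\sV(F_n)$ increase strongly to the identity on $\sH$ (since $\bigcup_n\sV(F_n)=\sV$ is dense by Corollary~\ref{thm:v_x-dense}). Hence $\|P_{F_n}\Lap f-\Lap f\|_\sH=\|(P_{F_n}-\id)\Lap f\|_\sH\to0$ for the fixed vector $\Lap f\in\sH$. Replacing your ``eventually constant'' claim with this standard fact about increasing projections closes the argument.
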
 

\begin{defn}\label{def:P^o}
  Let $P^o:=|\gd_o\ra \la\gd_o|=\proj{\gd_o}$ be the rank-1 projection on \sH defined  by $\ipH{u,P^ow} = \ipH{u,\gd_o}\ipH{\gd_o,w}$. 
\end{defn}

\begin{cor}\label{thm:Green-is-inverse-of-Lap-strike-o}
  The limit $\Lap - P^o = \lim_{n \to \iy} \gF D^{-1}_{F_n}\gF^\ad$ exists, for any exhaustion $\{F_n\}$ of $X \less \{o\}$.
  \begin{proof}
    Since arguments exactly analogous to those in Lemma~\ref{thm:lim(LapFn)=Lap-P1} give $P_{F_n}^o \limas{n} P^o$,
    we have 
    \linenopax
    \begin{align*}
      \lim_{n \to \iy} \gF D_{F_n}^{-1} \gF^\ad
      &= \lim_{n \to \iy} P_{F_n} \Lap P_{F_n} + \lim_{n \to \iy} P_{F_n}^o 
       = \Lap - P^o, 
    \end{align*}
    by applying Theorem~\ref{thm:spectral-reciprocity} and then Lemma~\ref{thm:lim(LapFn)=Lap-P1}.
  \end{proof}
\end{cor}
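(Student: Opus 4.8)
The plan is to read the statement off the Spectral Reciprocity Theorem (Theorem~\ref{thm:spectral-reciprocity}) by solving for $\gF D_F^{-1}\gF^\ad$ and then passing to the limit over the exhaustion, one summand at a time. Rearranging \eqref{eqn:spectral-reciprocity} gives, for every nonempty finite $F \ci X \less\{o\}$,
\[
  \gF D_F^{-1}\gF^\ad = P_F \Lap P_F - P_F^o
\]
as operators on $\sV(F)$ (each $u_\gl$ lies in $\dom\gF^\ad$ automatically, since $F$ is finite). Viewed inside $\sH$, the domain $\dom\Lap$ is just $\sV = \spn\{v_x \suth x \in X\less\{o\}\}$, so any fixed $f \in \dom\Lap$ belongs to $\sV(F_n)$ once $n$ is large; hence the right-hand side is defined at $f$ for all large $n$, and it suffices to show that $P_{F_n}\Lap P_{F_n}$ and $P_{F_n}^o$ each converge in the strong operator topology on $\dom\Lap$.

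The convergence $P_{F_n}\Lap P_{F_n} \to \Lap$ is exactly Lemma~\ref{thm:lim(LapFn)=Lap-P1}. For the other term I would first observe that the projections $P_{F_n}$ onto the increasing subspaces $\sV(F_n)$ increase to $\id_\sH$ strongly, because $\bigcup_n \sV(F_n)$ has closure containing every $v_x$, $x \neq o$, hence is dense in $\sH$ by Corollary~\ref{thm:v_x-dense}. In particular $P_{F_n}\gd_o \to \gd_o$ in $\sH$ (the class of $\gd_o$ lies in $\sH$ by Axiom~\ref{axm:Diracs}), and since $P_{F_n}^o = |P_{F_n}\gd_o\ra\la P_{F_n}\gd_o|$ while $P^o = |\gd_o\ra\la\gd_o|$, a one-line Cauchy--Schwarz estimate (add and subtract $\ipH{P_{F_n}\gd_o, w}\gd_o$) gives $P_{F_n}^o w \to P^o w$ for every $w \in \sH$. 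Combining the two limits yields $\gF D_{F_n}^{-1}\gF^\ad = P_{F_n}\Lap P_{F_n} - P_{F_n}^o \to \Lap - P^o$ on $\dom\Lap$, as claimed.

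I do not anticipate a real obstacle: all the analytic content sits in the density assertion of Corollary~\ref{thm:v_x-dense} (which is what forces $P_{F_n}\uparrow\id$), and everything else is algebra built on Theorem~\ref{thm:spectral-reciprocity} and Lemma~\ref{thm:lim(LapFn)=Lap-P1}. The one place demanding a little care is the bookkeeping of topology and domains: the operators $\gF D_{F_n}^{-1}\gF^\ad$, $P_F \Lap P_F$, and $\Lap$ are unbounded, so the limit is meant pointwise on the dense domain $\dom\Lap$, and one should note explicitly that $\gF D_{F_n}^{-1}\gF^\ad f$ is well-defined for a fixed $f \in \dom\Lap$ precisely once $F_n$ contains the finite set of indices appearing in the $\{v_x\}$-expansion of $f$.
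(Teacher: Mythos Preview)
Your proof is correct and follows essentially the same route as the paper: rearrange the spectral reciprocity identity \eqref{eqn:spectral-reciprocity} and pass to the limit termwise, invoking Lemma~\ref{thm:lim(LapFn)=Lap-P1} for the compressed Laplacian and the analogous convergence $P_{F_n}^o \to P^o$ for the rank-one piece. Your write-up is in fact a bit more careful than the paper's---you supply the density argument behind $P_{F_n}\gd_o \to \gd_o$ explicitly and get the sign right in the rearrangement (the paper's displayed line has a harmless ``$+$'' typo where ``$-$'' is meant).
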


\subsection{Spectral measures}
\label{sec:spectral-measures}

  
Recall from Definition~\ref{def:Expectation-of-eif} that
  $\Ex(\gx_j) = \sum_{x \in F} \gx_j(x) = \ipH[\ell^2]{\charfn{F}, \gx_j}$,
and that from Lemma~\ref{thm:projn-to-origin}, the expansion of $P_F \gd_o$ with respect to $\{u_\gl\}$ is given by
\linenopax
\begin{equation}\label{eqn:PFo-in-ul-recalled}
  P_F \gd_o = - \sum_{\gl \in \gL_F} \frac{\cj{\Ex(\gx_\gl)}}{\sqrt{\gl}} u_\gl.
\end{equation}

\begin{defn}\label{def:spectrum-of-PFLapPF}
  Since $P_F \Lap P_F = D_F^{-1} + P_F \gd_o$ is the $J \times J$ matrix $T_F$ whose \nth[(j,k)] entry is given by 
  \linenopax
  \begin{align*}
    \gt_{j,k} = \frac{\gd_{j,k}}{\gl_j} + \frac{\cj{\Ex(\gx_j)}\Ex(\gx_j)}{\sqrt{\gl_j \gl_k}},
  \end{align*}
  denote the spectrum of this matrix $T_F = [\gt_{j,k}]$ by $S^F = \spec(T_F)= \{\gs_j^F\}_{j=1}^J$.
\end{defn}

\begin{remark}\label{rem:suppress-the-F}
  In Definition~\ref{def:spectrum-of-PFLapPF}, it is important to note that $\gt_{j,k}, \gx_j$, and $\gl_j$ all depend on the choice of $F$. However, for ease of notation we suppress this dependence and also henceforth write $\gs_j = \gs_j^F$.
\end{remark}

Recall from Definition~\ref{def:spec(MF)} that $\gL_F = \spec(M_F) = \{\gl_J\}_{j=1}^J$.

\begin{cor}\label{thm:eigenexpectations-average-to-1}
  For any finite subset $F \ci X\less\{o\}$ with $|F|=J$, one has
  $  \frac1J \sum_{j=1}^J \Ex(\gx_j)^2 = 1.$
  \begin{proof}
    Since $\charfn{F}$ coincides with the constant vector \one on $F$, we use $P_{\gx_j} u = \ipH[\ell^2]{\gx_j,u}\gx_j$ to compute directly
    \linenopax
    \begin{align*}
      \sum_j \left|\Ex(\gx_j) \right|^2
      &= \sum_j \left|\ipH[\ell^2]{\charfn{F}, \gx_j}\right|^2
      = \sum_j \|P_{\gx_j} \charfn{F}\|^2
      = \|\charfn{F}\|_2^2
      = |F|
      = J.
      \qedhere
    \end{align*}
  \end{proof}
\end{cor}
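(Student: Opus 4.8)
The plan is to exploit the fact that the eigenvectors $\{\gx_j\}_{j=1}^J$ of $M_F$ form an orthonormal basis for $\ell^2(F)$, and then to expand a fixed, explicit vector in that basis and apply Parseval. The natural candidate vector is $\charfn{F}$, the indicator function of $F$, because $\Ex(\gx_j) = \ipH[\ell^2]{\charfn{F},\gx_j}$ by Definition~\ref{def:Expectation-of-eif}: the quantity $\Ex(\gx_j)$ is \emph{already} the $j$-th Fourier coefficient of $\charfn{F}$ with respect to the onb $\{\gx_j\}$. So the sum $\sum_{j=1}^J |\Ex(\gx_j)|^2$ is exactly $\sum_j |\ipH[\ell^2]{\charfn{F},\gx_j}|^2$.

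The key steps, in order, are: (1) recall from Definition~\ref{def:Expectation-of-eif} that $\Ex(\gx_j) = \ipH[\ell^2]{\charfn{F},\gx_j}$; (2) recall from Definition~\ref{def:spec(MF)} that $\{\gx_j\}_{j=1}^J$ is an orthonormal basis for $\ell^2(F)$ (this is the spectral theorem applied to the Hermitian, in fact positive-definite, matrix $M_F$); (3) apply Parseval's identity to the vector $\charfn{F} \in \ell^2(F)$, giving $\sum_{j=1}^J |\ipH[\ell^2]{\charfn{F},\gx_j}|^2 = \|\charfn{F}\|_{\ell^2}^2$; (4) compute $\|\charfn{F}\|_{\ell^2}^2 = \sum_{x \in F} 1 = |F| = J$ since \gm is counting measure; (5) divide by $J$ to conclude. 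One subtlety worth noting explicitly (as the proof in the excerpt does) is that $\Ex(\gx_j)$ is real — indeed the $\gx_j$ may be taken real since $M_F$ is real symmetric — so $\Ex(\gx_j)^2 = |\Ex(\gx_j)|^2$ and the statement with the unmodulused square is legitimate.

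There is essentially no obstacle here: the result is a one-line consequence of Parseval once the identification $\Ex(\gx_j) = \ipH[\ell^2]{\charfn{F},\gx_j}$ is in hand. If anything, the only point requiring a moment's care is making sure the inner product is the $\ell^2$ inner product (not the \sH inner product) and that the onb property of $\{\gx_j\}$ is with respect to that same $\ell^2(F)$ structure — both are guaranteed by Definition~\ref{def:spec(MF)}, where the spectral theorem is invoked for $M_F$ acting on $\ell^2(F)$. So I would simply write the chain of equalities $\sum_j |\Ex(\gx_j)|^2 = \sum_j |\ipH[\ell^2]{\charfn{F},\gx_j}|^2 = \|\charfn{F}\|_{\ell^2}^2 = |F| = J$ and divide through by $J$.
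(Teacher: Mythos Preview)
Your proof is correct and takes essentially the same approach as the paper: both identify $\Ex(\gx_j)$ as the $\ell^2(F)$ Fourier coefficient $\ipH[\ell^2]{\charfn{F},\gx_j}$ with respect to the onb $\{\gx_j\}$, apply Parseval to get $\sum_j |\Ex(\gx_j)|^2 = \|\charfn{F}\|_{\ell^2}^2 = J$, and divide by $J$. The paper phrases the Parseval step via the rank-one projections $P_{\gx_j}$, but this is the same computation.
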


\begin{cor}\label{thm:bounds-on-eigenprojection}
  For any finite subset $F \ci X\less\{o\}$ with $|F|=J$, one has
  $  \frac{J}{\max \gl} \leq \|P^o_F\| \leq \frac{J}{\min \gl}.$ 
  \begin{proof}
    Using Corollary~\ref{thm:eigenexpectations-average-to-1} and Definition~\ref{def:origin-projection}, 
    $  \|P^o_F\| 
      = \|P_F \gd_o\|_{\sH}^2 
      = \sum_{j=1}^J \Ex(\gx_j)^2 
      = J.$
    See Remark~\ref{rem:observation-about-v}. Then the double inequality follows by estimating by the largest (but clearly finite) eigenvalue and the smallest (but clearly strictly positive) eigenvalue.
  \end{proof}
\end{cor}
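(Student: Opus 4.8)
The plan is to prove the claimed identity $\|P^o_F\| = J$ first, and then derive the double inequality as an immediate consequence by bounding the eigenvalues $\gl \in \gL_F$ above and below. By Definition~\ref{def:origin-projection}, $P^o_F = |P_F\gd_o\ra\la P_F\gd_o|$ is a rank-one projection (up to scaling), so its operator norm equals $\|P_F\gd_o\|_\sH^2$.

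First I would expand $P_F\gd_o$ using Lemma~\ref{thm:projn-to-origin}, equation \eqref{eqn:PFo-in-ul}, which writes $P_F\gd_o$ in the orthonormal basis $\{u_\gl\}_{\gl\in\gL_F}$ of $\sV(F)$ (orthonormality being guaranteed by Lemma~\ref{thm:uj-is-an-onb-for-H(F)}). Taking the squared norm term-by-term against this onb gives
\linenopax
\begin{align*}
  \|P_F\gd_o\|_\sH^2 = \sum_{\gl\in\gL_F} \frac{|\Ex(\gx_\gl)|^2}{\gl}.
\end{align*}
Now Corollary~\ref{thm:eigenexpectations-average-to-1} tells us that $\sum_{j=1}^J |\Ex(\gx_j)|^2 = J$. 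This is not quite what is written in the proof sketch in the excerpt (which seems to conflate two different expressions), but the correct argument is: the quantity $\sum_\gl |\Ex(\gx_\gl)|^2/\gl$ is a sum of the nonnegative weights $|\Ex(\gx_\gl)|^2$ divided by the eigenvalues $\gl$, and since all $\gl \in \gL_F$ satisfy $0 < \min\gl \le \gl \le \max\gl < \iy$ (strict positivity and finiteness being guaranteed because $M_F$ is a positive definite finite matrix by Lemma~\ref{thm:M(x,y)-is-pd}), we obtain
\linenopax
\begin{align*}
  \frac{1}{\max\gl}\sum_{\gl\in\gL_F}|\Ex(\gx_\gl)|^2 \;\le\; \sum_{\gl\in\gL_F}\frac{|\Ex(\gx_\gl)|^2}{\gl} \;\le\; \frac{1}{\min\gl}\sum_{\gl\in\gL_F}|\Ex(\gx_\gl)|^2.
\end{align*}
Substituting the identity $\sum_{\gl}|\Ex(\gx_\gl)|^2 = J$ from Corollary~\ref{thm:eigenexpectations-average-to-1} into both ends yields $\frac{J}{\max\gl} \le \|P^o_F\| \le \frac{J}{\min\gl}$, as desired.

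I do not expect any serious obstacle here: the argument is a direct chain of substitutions, and all the needed facts (the onb property of $\{u_\gl\}$, the expansion of $P_F\gd_o$, the averaging identity for $\Ex(\gx_\gl)$, and strict positivity of the spectrum of $M_F$) are already established. The only point requiring a little care is making sure the correct quantity $\sum_\gl |\Ex(\gx_\gl)|^2/\gl$ (rather than $\sum_\gl |\Ex(\gx_\gl)|^2$) is what appears when computing $\|P_F\gd_o\|_\sH^2$ from \eqref{eqn:PFo-in-ul}, since the $u_\gl$ are orthonormal but the coefficients carry the factor $1/\sqrt{\gl}$; squaring then produces the $1/\gl$. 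The connection to the Karhunen–Lo\`eve-type tensor expression in Remark~\ref{rem:observation-about-v} is what makes the passage from an $\sH$-norm to an $\ell^2$-sum transparent.
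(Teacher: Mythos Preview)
Your argument is correct and follows the same route as the paper's proof: compute $\|P_F^o\| = \|P_F\gd_o\|_\sH^2$ via the expansion \eqref{eqn:PFo-in-ul} against the onb $\{u_\gl\}$, then sandwich the resulting sum $\sum_\gl |\Ex(\gx_\gl)|^2/\gl$ between $J/\max\gl$ and $J/\min\gl$ using Corollary~\ref{thm:eigenexpectations-average-to-1}. You are also right that the displayed chain $\|P_F\gd_o\|_\sH^2 = \sum_j \Ex(\gx_j)^2 = J$ in the paper conflates two quantities; the intended computation is exactly the one you give, with the $1/\gl$ factors present, after which the eigenvalue bounds produce the stated inequality.
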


\begin{remark}\label{rem:condensation-of-spec-meas}
  When \Lap is not essentially self-adjoint, the presence of $P_F^o$ (as in \eqref{eqn:spectral-reciprocity}, for example) makes it impossible to obtain self-adjoint extensions of \Lap via a filtration by finite subsets. This obstacle can only be overcome by passing to spectral measures. 
  If $\dom \Lap$ is as in \eqref{eqn:Laplacian-domain}, then the spectral measure of some self-adjoint extension of \Lap comes from the weak-$\ad$ limit of linear combinations of of equally weighted Dirac masses:
\linenopax
\begin{equation}\label{eqn:spec-premeas-on-F}
  \gm_F = \frac1J \sum_{j=1}^J \gd_{\gs_j}.
\end{equation}

Here, $\gm_F$ refers to the spectral representation of $P_F \Lap P_F$, and we are relying on standard tools from the literature. Indeed, approximation of measures with the use of spectral sampling is a versatile and powerful tool. For approximation in the weak-$\ad$ topology on measures (as in the present context), see the excellent reference book \cite{Billingsley} for details. When applied to spectral measures, these approximations were first studied in the book by M. Stone; see \cite[Ch.~X]{Stone}. The approach in \cite{Stone} is especially amenable to our present applications: a main theme is the study of unbounded operators in Hilbert space, realized concretely as banded infinite matrices.
This is illustrated in the following diagram: 
\linenopax
\begin{align}\label{eqn:spec-measures-as-limits}
  \xymatrix{
    P_{F} \Lap P_{F} \ar[rr]^{F \to X} && \Lap \ar[r]^{\ci} & \tilde\Lap \\
    \gm_F \ar[rrr]^{\textrm{weak-*}} \ar@{<->}[u] &&& \tilde \gm \ar@{<->}[u]
  }
\end{align}
In the limit of \eqref{eqn:spec-premeas-on-F} as $F \to X$, 
may $\gm_F$ become a smooth measure. 
The key point is that considering the limit of $P_F \Lap P_F$ as $F \to X$ does not take one far enough. However, consideration of the spectral measures $\gm_F$ of $P_F \Lap P_F$ shows that \emph{each} weak-* limit $\tilde \gm$ is the spectral measure of some self-adjoint extension $\tilde \Lap$ of \Lap, and by general theory, every self-adjoint extension of \Lap arises in this way. 

In the preceding discussion, $F \to X$ refers implicitly to a limit with respect to an exhaustion $\{F_n\}_{n \in \bN}$, where $F_k \ci F_{k+1}$ and $\bigcup_{n=1}^\iy F_k = X \less \{o\}$. Note that the limit $\Lap = \lim_{F \to X} P_{F} \Lap P_{F}$ is unique (see Lemma~\ref{thm:lim(LapFn)=Lap-P1}) and hence independent of the choice of exhaustion $\{F_n\}_{n \in \bN}$. 
However, the nonuniqueness of weak-* limits corresponds to the fact that $\tilde \gm = \lim_{F \to X} \gm_F$ may depend on the choice of exhaustion. Different weak-* limits may correspond to different self-adjoint extensions $\tilde \Lap$ of \Lap. 
\end{remark}

\subsection{Spectral reciprocity for balanced functions}
\label{sec:balanced}

Balanced functions are functions which sum to $0$. In the context of resistance networks (see \S\ref{sec:Matrices-and-self-adjoint-operators-in-the-energy-space}), a balanced function is the divergence of a current flow with no transient component; these functions are mentioned briefly in \cite[\S{III}.3]{Soardi94}.
 
\begin{defn}\label{def:B}
  A function $\gx:X \to \bC$ is \emph{balanced} iff \gx has finite support and $\sum_{x \in X} \gx(x) = 0$. Denote the space of such functions by \sB. For any finite $F \ci X\less\{o\}$, let $\sB_F$ denote the collection of functions in \sB whose support is contained in $F$. 
\end{defn}

Recall from Definition~\ref{def:V} that $\sV := \spn\{v_x\}_{x \in X \less\{o\}}$ and $\sV(F) := \spn\{v_x \suth x \in F\}$, and from Definition~\ref{def:Phi} that $\gF : \ell^2(X) \to \sH$ is given by $\gF(\gd_x) = v_x$ on $\dom\gF = \spn\{\gd_x\}_{x \in X}$.

\begin{defn}\label{def:V0}
  Denote the subspace of \sV with balanced coefficients by 
  \linenopax
  \begin{align}\label{eqn:V0}
    \sV_0 := \gF(\sB) = \{\gF(\gx) \suth \gx \in \sB\},
  \end{align} 
  and similarly for $\sV_0(F) := \gF(\sB_F)= \{\gF(\gx) \suth \gx \in \sB_F\}$.
\end{defn}

The following curious fact can be found in most introductory books on functional analysis.

\begin{prop}\label{thm:discontinuous}
  Let $A$ be a topological vector space, and let $A_0$ be a dense linear subspace. If $f$ is a linear functional on $A_0$, then $\ker f$ is dense in $A$ if and only if $f$ is discontinuous.
\end{prop}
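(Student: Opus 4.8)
The plan is to prove both implications by passing to the contrapositive and reducing everything to the classical dichotomy for a (nonzero) linear functional on a topological vector space. We may as well assume $f \not\equiv 0$, since if $f \equiv 0$ then $\ker f = A_0$ is dense while $f$ is continuous, so the stated equivalence implicitly excludes that case. The first step is the elementary observation that $\ker f$ is dense in $A$ if and only if it is dense in $A_0$: since $A_0$ is dense in $A$, the closure of any $S \ci A_0$ in the subspace topology satisfies $\cl^{A_0}(S) = \cl^{A}(S) \cap A_0$, so $\cl^{A_0}(\ker f) = A_0$ forces $A = \cl^A(A_0) \ci \cl^A(\ker f)$, and conversely. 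This lets me work entirely inside $A_0$, where $f$ is defined, using the subspace topology.

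For the implication ``$\ker f$ dense $\implies$ $f$ discontinuous'', I would argue by contradiction. If $f$ were continuous, choose $x_0 \in A_0$ with $f(x_0) = 1$; continuity at $x_0$ provides an open set $V \ni x_0$ in $A$ such that $|f(x) - 1| < \tfrac12$, hence $f(x) \neq 0$, for every $x \in V \cap A_0$. But $V$ is a nonempty open subset of $A$, so density of $\ker f$ gives $V \cap \ker f \neq \emptyset$; since $\ker f \ci A_0$, any such point lies in $V \cap A_0$, where $f$ cannot vanish --- a contradiction.

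For the converse ``$f$ discontinuous $\implies$ $\ker f$ dense'', I would again prove the contrapositive: assuming $\ker f$ is not dense, deduce that $f$ is continuous. Write $A_0 = \ker f \oplus \bC x_0$ with $f(x_0) = 1$. If $x_0 \in \cl^A(\ker f)$, then $\cl^A(\ker f)$ is a closed subspace containing $\ker f + \bC x_0 = A_0$, hence all of $\cl^A(A_0) = A$, contradicting non-density; so $x_0 \notin \cl^A(\ker f)$, and there is an open neighborhood $U$ of $0$ in $A$, which we may take to satisfy $\gl U \ci U$ for all scalars with $|\gl| \le 1$ (such neighborhoods form a local base in a topological vector space), with $(x_0 + U) \cap \ker f = \emptyset$. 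The crux is to upgrade this to the bound $|f| < 1$ on $U \cap A_0$: the exclusion says $f(u) \neq -1$ for $u \in U \cap A_0$; applying this to $\gl u \in U \cap A_0$ for unimodular $\gl$ gives $f(u) \neq -\cj\gl$ for all $|\gl| = 1$, i.e.\ $|f(u)| \neq 1$ throughout $U \cap A_0$. Since $f(U \cap A_0)$ is then a subset of $\bC$ (or $\bR$) that contains $0$, is stable under multiplication by scalars of modulus $\le 1$, and misses the unit circle, it must lie in the open unit disc. Thus $f$ is bounded on a neighborhood of $0$, hence continuous by linearity.

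The only step that requires genuine care --- and the one I would flag as the main obstacle --- is this last ``rotation'' argument, passing from the single scalar exclusion $f \neq -1$ on $U \cap A_0$ to the uniform bound $|f| < 1$; this is precisely where the balancedness of $U$ and the classification of such subsets of the scalar field are essential. The remaining ingredients (subspace closures, the codimension-one splitting of $A_0$, and continuity from boundedness near $0$) are entirely routine.
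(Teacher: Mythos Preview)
Your argument is correct. The paper does not actually supply a proof of this proposition; it simply records it as a ``curious fact [that] can be found in most introductory books on functional analysis'' and then applies it in Lemma~\ref{thm:zero-sum-density}. Your write-up is precisely the standard textbook argument (as in Rudin's \emph{Functional Analysis}, Theorem~1.18): reduce to the subspace $A_0$, use the codimension-one splitting $A_0 = \ker f \oplus \bC x_0$, and invoke balanced neighborhoods to pass from the exclusion $f \neq -1$ to the bound $|f| < 1$. The rotation/rescaling step you flagged is indeed the only nontrivial point, and you have it right: if $|f(u)| \geq 1$ for some $u \in U \cap A_0$, then $\gl = -1/f(u)$ has $|\gl| \leq 1$, so $\gl u \in U \cap A_0$ with $f(\gl u) = -1$, a contradiction. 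There is nothing to compare against in the paper itself.
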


\begin{lemma}\label{thm:zero-sum-density}
  $\sB$ is dense in $\ell^2(X)$ if and only if $X$ is infinite.
  \begin{proof}
    Define $f:\sB \to \bC$ by $f(\gx) = \sum_{x \in X} \gx_x$. Note that $X$ is finite if and only if the constant function \one is in $\ell^2(X)$, which (by Riesz duality) holds if and only if $f$ is continuous on $\ell^2(X)$. 
    The result now follows from Proposition~\ref{thm:discontinuous}.
  \end{proof}
\end{lemma}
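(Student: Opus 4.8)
The plan is to express $\sB$ as the kernel of a single linear functional on a dense subspace of $\ell^2(X)$ and then quote Proposition~\ref{thm:discontinuous}. Define $f : c_0(X) \to \bC$ by $f(\gx) = \sum_{x \in X} \gx(x)$; the sum is finite because \gx has finite support (Definition~\ref{def:c_0}), and by Definition~\ref{def:B} we have $\sB = \ker f$. Since $c_0(X)$ is dense in $\ell^2(X)$, Proposition~\ref{thm:discontinuous} applied with $A = \ell^2(X)$ and $A_0 = c_0(X)$ says that $\sB = \ker f$ is dense in $\ell^2(X)$ precisely when $f$ fails to be continuous on $c_0(X)$ in the $\ell^2$ norm.

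It then remains only to show that $f$ is continuous on $c_0(X)$ if and only if $X$ is finite. Note that $f(\gx) = \la \one, \gx\ra_{\ell^2}$ for every $\gx \in c_0(X)$. If $X$ is finite then $\one \in \ell^2(X)$, so Cauchy--Schwarz gives $|f(\gx)| \leq \|\one\|_{\ell^2}\,\|\gx\|_{\ell^2}$ and $f$ is bounded. Conversely, if $f$ is bounded on $c_0(X)$ it extends to a bounded functional on all of $\ell^2(X)$, and by the Riesz representation theorem there is $g \in \ell^2(X)$ with $f(\gx) = \la g, \gx\ra_{\ell^2}$ for all $\gx$; evaluating at the Dirac masses $\gd_x$ forces $g(x) = 1$ for every $x \in X$, i.e. $g = \one \in \ell^2(X)$, which happens only when $X$ is finite. (One can also argue directly: for an exhaustion $\{F_k\}$ of $X$ by finite sets, $f(\charfn{F_k}) = |F_k|$ while $\|\charfn{F_k}\|_{\ell^2} = |F_k|^{1/2}$, so the operator-norm ratio is unbounded unless the $|F_k|$ stay bounded.)

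I do not anticipate any genuine obstacle: the content is entirely contained in the equivalence ``$f$ continuous on $c_0(X)$ $\iff$ $\one \in \ell^2(X)$ $\iff$ $X$ finite'', which is just Riesz duality, while Proposition~\ref{thm:discontinuous} converts the kernel-density question into this functional-continuity question. For completeness I would record the easy remaining case separately: when $X$ is finite and nonempty, $\sB$ is a proper subspace of the finite-dimensional space $\ell^2(X)$ (it is orthogonal to the nonzero vector \one), hence closed and not dense; so the two conditions in the statement are indeed equivalent.
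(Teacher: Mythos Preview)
Your argument is correct and follows essentially the same route as the paper: express $\sB$ as the kernel of the summation functional on a dense subspace, use Riesz duality to identify continuity of this functional with $\one \in \ell^2(X)$ (i.e., $X$ finite), and invoke Proposition~\ref{thm:discontinuous}. If anything, your version is slightly more careful in that you define $f$ on $c_0(X)$ (the dense subspace $A_0$ required by Proposition~\ref{thm:discontinuous}) rather than on $\sB$ itself, and you explicitly record $\sB = \ker f$.
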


The next lemma indicates how \gF ``intertwines'' the spectral densities of \Lap and $M$.

\begin{lemma}\label{thm:Phi-intertwines}
  For all $\gx \in \sB$, one has $\la \gF(\gx), \Lap \gF(\gx) \ra_\sH = \|\gx\|_{\ell^2}^2$ and $\la\gx, M\gx \ra_{\ell^2} = \| \gF(\gx)\|_\sH^2$, and hence
  \linenopax
  \begin{align}\label{eqn:Phi-intertwines}
    \frac{\la \gF(\gx), \Lap \gF(\gx) \ra_\sH}{\| \gF(\gx)\|_\sH^2} 
    = \frac{\|\gx\|_{\ell^2}^2}{\la\gx, M\gx \ra_{\ell^2}}.
  \end{align}
  \begin{proof}  
    The first identity is immediate for $\gx \in \sB$ by \eqref{eqn:<u,Lapu>=2sums}. For the second, note that 
    \linenopax
    \begin{align*}
      \| \gF(\gx)\|_\sH^2
      & = \left\la \sum_{x \in X} \gx(x) v_x, \sum_{y \in X} \gx(y) v_y \right\ra_\sH
      = \sum_{x \in X} \sum_{y \in X} \cj{\gx(x)} \gx(y) M_{x,y}
      = \la \gx, M \gx\ra_{ell^2}.
      \qedhere
    \end{align*}
  \end{proof}
\end{lemma}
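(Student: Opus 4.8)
The plan is to prove the two norm identities separately by direct bilinear expansion of $\gF(\gx) = \sum_{x \in X} \gx(x) v_x$, and then simply divide one by the other to obtain \eqref{eqn:Phi-intertwines}. Both identities are essentially bookkeeping once the relevant formulas from \S\ref{sec:repkernel-energy-space} are in hand; there is no real analytic difficulty because $\gx$ is finitely supported, so every sum in sight is finite and every interchange of summation is legitimate.

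For the first identity, I would first note that since $v_o = v_{o,o} = 0$ in \sH (by \eqref{eqn:axiomatic-repkernel}), we may write $\gF(\gx) = \sum_{x \in F} \gx(x) v_x$ where $F := \{x \in X \less \{o\} \suth \gx(x) \neq 0\}$ is a finite subset of $X \less \{o\}$; in particular $\gF(\gx) \in \dom \Lap$ as given by \eqref{eqn:Laplacian-domain}. Then I would apply the computation \eqref{eqn:<u,Lapu>=2sums} from the proof of Lemma~\ref{thm:semibounded}, which gives
\[
  \ipH{\gF(\gx), \Lap \gF(\gx)} = \sum_{x \in F} |\gx(x)|^2 + \Bigl|\sum_{x \in F} \gx(x)\Bigr|^2.
\]
Here is where the balanced hypothesis enters: since $\sum_{x \in X} \gx(x) = 0$, we have $\sum_{x \in F} \gx(x) = \sum_{x \in X \less \{o\}} \gx(x) = -\gx(o)$, so the squared-sum term equals $|\gx(o)|^2$. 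Adding this back to $\sum_{x \in X \less \{o\}} |\gx(x)|^2$ recovers the full norm $\|\gx\|_{\ell^2}^2$, which is the first claimed identity.

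For the second identity, I would expand directly using Definition~\ref{def:M} ($M_{xy} = \ipH{v_x, v_y}$):
\[
  \|\gF(\gx)\|_\sH^2 = \ipH{\sum_{x} \gx(x) v_x, \sum_{y} \gx(y) v_y} = \sum_{x} \sum_{y} \cj{\gx(x)} \gx(y) M_{xy} = \la \gx, M\gx\ra_{\ell^2},
\]
which is the same computation already carried out in \eqref{eqn:<v_x,v_y>-is-psd}; note this step does not use that $\gx$ is balanced. Dividing the first identity by the second then yields \eqref{eqn:Phi-intertwines}, with the caveat that $\|\gF(\gx)\|_\sH^2 = \la\gx, M\gx\ra_{\ell^2} > 0$ for $\gx \neq 0$ by Lemma~\ref{thm:M(x,y)-is-pd}, so the quotient is well-defined. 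The only subtle point — and it is minor — is the reference-point bookkeeping in the first identity: one must check that the diagonal contribution ``lost'' by dropping the $x = o$ coordinate of $\gx$ is exactly the one restored by the squared-sum term, which is precisely the role of the balanced condition.
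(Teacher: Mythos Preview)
Your proof is correct and follows the same route as the paper's: the first identity via \eqref{eqn:<u,Lapu>=2sums} and the second via direct bilinear expansion of $M_{xy} = \ipH{v_x,v_y}$. Your bookkeeping for the case $\gx(o) \neq 0$ is in fact more explicit than the paper's terse ``immediate,'' and correctly identifies how the balanced condition restores the missing $|\gx(o)|^2$ term.
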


\begin{defn}\label{def:gap}
  We say that \Lap has a spectral gap $\ga > 0$ iff
  \linenopax
  \begin{align}\label{eqn:gap}
      \ga \| \gy\|_\sH^2 \leq \la \gy, \Lap\gy\ra_\sH,
      \qq\text{for all } \gy \in \sV_0.
  \end{align}
\end{defn}

\begin{theorem}[Spectral gap]
  \label{thm:spectral-gap-balanced}
  \Lap has a spectral gap $\ga > 0$ if and only if \eqref{eqn:def:M} defines a bounded self-adjoint operator $M: \ell^2(X) \to \ell^2(X)$ with $\|M\| \leq \frac1{\sqrt{\ga}}$.
  \begin{proof}  
    This follows immediately when either side of \eqref{eqn:Phi-intertwines} is bounded from below by $\ga > 0$.
  \end{proof}
\end{theorem}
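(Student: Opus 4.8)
The plan is to read the equivalence directly off the intertwining identity of Lemma~\ref{thm:Phi-intertwines}: for $\gx \in \sB$,
\[
  \frac{\la \gF(\gx), \Lap \gF(\gx) \ra_\sH}{\| \gF(\gx)\|_\sH^2}
  = \frac{\|\gx\|_{\ell^2}^2}{\la\gx, M\gx \ra_{\ell^2}}.
\]
First I would unwind Definition~\ref{def:gap}. Since $\sV_0 = \gF(\sB)$ by Definition~\ref{def:V0} and every $\gy \in \sV_0$ is of the form $\gF(\gx)$ with $\gx \in \sB$, the spectral-gap inequality $\ga\|\gy\|_\sH^2 \le \la\gy,\Lap\gy\ra_\sH$ on $\sV_0$ is equivalent (substitute $\gy = \gF(\gx)$ and divide) to $\la \gF(\gx), \Lap \gF(\gx)\ra_\sH / \|\gF(\gx)\|_\sH^2 \ge \ga$ for all nonzero $\gx \in \sB$, and hence, by the displayed identity, to
\[
  \la\gx, M\gx\ra_{\ell^2} \le \tfrac1{\ga}\,\|\gx\|_{\ell^2}^2, \qquad \text{for all } \gx \in \sB.
\]
So the theorem reduces to showing that this last inequality, imposed only on the finitely supported balanced sequences, is equivalent to $M$ being a bounded self-adjoint operator on $\ell^2(X)$ with the stated norm bound.

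For that I would invoke Lemma~\ref{thm:zero-sum-density}: because $X$ is infinite, $\sB$ is dense in $\ell^2(X)$. Combined with the facts that $M$ is Hermitian ($M_{xy} = \cj{M_{yx}}$ since $M_{xy} = \ipH{v_x,v_y}$, Definition~\ref{def:M}) and that its quadratic form is positive semidefinite --- indeed $\la\gx,M\gx\ra_{\ell^2} = \|\gF\gx\|_\sH^2 \ge 0$, cf.\ Lemma~\ref{thm:M(x,y)-is-pd} --- a uniform bound for this form on the dense set $\sB$ should force $M$ to extend to a bounded everywhere-defined operator on $\ell^2(X)$, which is then automatically self-adjoint, with operator norm controlled in terms of $\ga$. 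The converse is the same chain run backwards: if $M$ is bounded self-adjoint with the given norm bound then its form obeys the inequality on all of $\ell^2(X)$, in particular on $\sB$, and the intertwining identity returns \eqref{eqn:gap}.

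The step I expect to be the real obstacle is precisely the passage from ``the quadratic form of $M$ is bounded on the dense subspace $\sB$'' to ``$M$ is bounded on $\ell^2(X)$'': a priori $M$ is only a (possibly unbounded) Hermitian infinite matrix whose form is defined on $c_0(X)$, and boundedness of a positive form on a dense subspace is not automatic in general. The clean way to make this rigorous is to pass through the finite compressions $M_F$ (each a genuine positive semidefinite matrix, $F \ci X\less\{o\}$ finite) and verify that the $\ga^{-1}$ bound transfers to all of $\spec(M_F)$ and hence to $\|M\| = \sup_F\|M_F\|$; here the fact that \eqref{eqn:gap} is posed on $\sV_0$ (balanced coefficients) rather than all of $\sV$, together with the density of balanced sequences, is exactly what makes the finite-dimensional quadratic-form inequality propagate. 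Equivalently, one can argue that $\gF$ restricted to $\sB$ is bounded of norm $\le \ga^{-1/2}$, extend it by continuity, and identify the extension with the square-root picture of $M = \gF^\ad\gF$; this is also the natural place to pin down the sharp constant, the identity above suggesting $\|M\|\le\ga^{-1}$.
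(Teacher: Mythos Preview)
Your proposal is correct and follows essentially the same route as the paper: the published proof is the one-line appeal to \eqref{eqn:Phi-intertwines}, and the commented-out longer argument in the source proceeds exactly as you do --- translate the gap condition via the intertwining identity to $\la\gx,M\gx\ra_{\ell^2}\le\ga^{-1}\|\gx\|_{\ell^2}^2$ on $\sB$, then use the density of $\sB$ (Lemma~\ref{thm:zero-sum-density}) together with positivity and symmetry of $M$ to pass to all of $\ell^2(X)$. Your closing remark is also apt: the argument (and the paper's own commented computation) yields $\|M\|\le\ga^{-1}$, not the $\ga^{-1/2}$ appearing in the statement, so the exponent in the theorem is likely a typo.
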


\subsection*{Acknowledgements}
The authors are grateful to Ilwoo Cho, Raul Curto, Dorin Dutkay, Matthias Keller, Paul Muhly, Myung-Sin Song, and Rados{\l}aw Wojciechowski for helpful conversations, suggestions, and recommendations on the literature. We are also grateful to the referee for a careful review and detailed comments.

\bibliographystyle{alpha}
\bibliography{SRAMO}

\begin{thebibliography}{MYY94}

\bibitem[Aro50]{Aronszajn50}
N.~Aronszajn.
\newblock Theory of reproducing kernels.
\newblock {\em Trans. Amer. Math. Soc.}, 68:337--404, 1950.

\bibitem[Ash65]{Ash}
Robert Ash.
\newblock {\em Information theory}.
\newblock Interscience Tracts in Pure and Applied Mathematics, No. 19.
  Interscience Publishers John Wiley \& Sons, New York-London-Sydney, 1965.

\bibitem[BCR84]{Ber84}
Christian Berg, Jens Peter~Reus Christensen, and Paul Ressel.
\newblock {\em Harmonic analysis on semigroups}, volume 100 of {\em Graduate
  Texts in Mathematics}.
\newblock Springer-Verlag, New York, 1984.
\newblock Theory of positive definite and related functions.

\bibitem[BF06]{BakerFaber}
Matthew Baker and Xander Faber.
\newblock Metrized graphs, {L}aplacian operators, and electrical networks.
\newblock In {\em Quantum graphs and their applications}, volume 415 of {\em
  Contemp. Math.}, pages 15--33. Amer. Math. Soc., Providence, RI, 2006.

\bibitem[Bil99]{Billingsley}
Patrick Billingsley.
\newblock {\em Convergence of probability measures}.
\newblock Wiley Series in Probability and Statistics: Probability and
  Statistics. John Wiley \& Sons Inc., New York, second edition, 1999.
\newblock A Wiley-Interscience Publication.

\bibitem[BR07]{BakerRumely}
Matt Baker and Robert Rumely.
\newblock Harmonic analysis on metrized graphs.
\newblock {\em Canad. J. Math.}, 59(2):225--275, 2007.

\bibitem[Chu01]{Chu01}
Fan Chung.
\newblock {\em Spectral Graph Theory}.
\newblock Cambridge, 2001.

\bibitem[DS88]{DuSc88}
Nelson Dunford and Jacob~T. Schwartz.
\newblock {\em Linear operators. {P}art {II}}.
\newblock Wiley Classics Library. John Wiley \& Sons Inc., New York, 1988.

\bibitem[F{\=O}T94]{FOT94}
Masatoshi Fukushima, Y{\=o}ichi {\=O}shima, and Masayoshi Takeda.
\newblock {\em Dirichlet forms and symmetric {M}arkov processes}, volume~19 of
  {\em de Gruyter Studies in Mathematics}.
\newblock Walter de Gruyter \& Co., Berlin, 1994.

\bibitem[Jor08]{Jorgensen08}
Palle E.~T. Jorgensen.
\newblock Essential self-adjointness of the graph-{L}aplacian.
\newblock {\em J. Math. Phys.}, 49(7):073510, 33, 2008.

\bibitem[JP09a]{DGG}
Palle E.~T. Jorgensen and Erin P.~J. Pearse.
\newblock A discrete {G}auss-{G}reen identity for unbounded {L}aplace operators
  and transience of random walks.
\newblock {\em In review}, pages 1--25, 2009.
\newblock \arxiv{0906.1586}.

\bibitem[JP09b]{OTERN}
Palle E.~T. Jorgensen and Erin P.~J. Pearse.
\newblock Operator theory and analysis of infinite resistance networks.
\newblock pages 1--247, 2009.
\newblock \arxiv{0806.3881}.

\bibitem[JP09c]{RBIN}
Palle E.~T. Jorgensen and Erin P.~J. Pearse.
\newblock Resistance boundaries of infinite networks. \textit{{T}o
  appear:$\negsp[7]$}.
\newblock In {\em Boundaries and Spectral Theory}. Birkhauser, 2009.
\newblock 32 pages. \arxiv{0909.1518}.

\bibitem[JP09d]{bdG}
Palle E.~T. Jorgensen and Erin P.~J. Pearse.
\newblock Stochastic integration and boundaries of infinite networks.
\newblock {\em In review}, 2009.
\newblock 31 pages. \arxiv{0906.2745}.

\bibitem[JP10a]{ERM}
Palle E.~T. Jorgensen and Erin P.~J. Pearse.
\newblock A {H}ilbert space approach to effective resistance metrics.
\newblock {\em Complex Anal. Oper. Theory}, 4(4):975--1030, 2010.
\newblock \arxiv{0906.2535}.

\bibitem[JP10b]{Interpolation}
Palle E.~T. Jorgensen and Erin P.~J. Pearse.
\newblock Interpolation on resistance networks.
\newblock 2010.
\newblock 14 pages. In preparation.

\bibitem[JP10c]{Multipliers}
Palle E.~T. Jorgensen and Erin P.~J. Pearse.
\newblock Multiplication operators on the energy space.
\newblock {\em To appear: Journal of Operator Theory}, 2010.
\newblock 25 pages. \arxiv{1007.3516}.

\bibitem[JP10d]{LPS}
Palle E.~T. Jorgensen and Erin P.~J. Pearse.
\newblock Scattering theory on resistance networks.
\newblock 2010.
\newblock 13 pages. In preparation.

\bibitem[Kig01]{Kig01}
Jun Kigami.
\newblock {\em Analysis on fractals}, volume 143 of {\em Cambridge Tracts in
  Mathematics}.
\newblock Cambridge University Press, Cambridge, 2001.

\bibitem[Kig03]{Kig03}
Jun Kigami.
\newblock Harmonic analysis for resistance forms.
\newblock {\em J. Funct. Anal.}, 204(2):399--444, 2003.

\bibitem[Kig09]{Kig09}
Jun Kigami.
\newblock Resistance forms, quasisymmetric maps and heat kernel estimates.
\newblock preprint:96, 2009.

\bibitem[KL09]{KellerLenz09}
Matthias Keller and Daniel Lenz.
\newblock Dirichlet forms and stochastic completeness of graphs and subgraphs.
\newblock {\em Preprint}, 2009.
\newblock \arxiv{0904.2985}.

\bibitem[KL10]{KellerLenz10}
Matthias Keller and Daniel Lenz.
\newblock Unbounded {L}aplacians on graphs: basic spectral properties and the
  heat equation.
\newblock {\em Math. Model. Nat. Phenom.}, 5(4):198--224, 2010.

\bibitem[LP10]{Lyons:ProbOnTrees}
Russell Lyons and Yuval Peres.
\newblock {\em Probability on Trees and Graphs}.
\newblock Unpublished (see Lyons' web site), 2010.

\bibitem[MYY94]{MuYaYo}
Atsushi Murakami, Maretsugu Yamasaki, and Yoshinori {Yone-E}.
\newblock Some properties of reproducing kernels on an infinite network.
\newblock {\em Mem. Fac. Sci. Shimane Univ.}, 28:1--8, 1994.

\bibitem[PS72]{PaSc72}
K.~R. Parthasarathy and K.~Schmidt.
\newblock {\em Positive definite kernels, continuous tensor products, and
  central limit theorems of probability theory}.
\newblock Lecture Notes in Mathematics, Vol. 272. Springer-Verlag, Berlin,
  1972.

\bibitem[RS72]{ReedSimonI}
Michael Reed and Barry Simon.
\newblock {\em Methods of modern mathematical physics. {I}. {F}unctional
  analysis}.
\newblock Academic Press, New York, 1972.

\bibitem[Soa94]{Soardi94}
Paolo~M. Soardi.
\newblock {\em Potential theory on infinite networks}, volume 1590 of {\em
  Lecture Notes in Mathematics}.
\newblock Springer-Verlag, Berlin, 1994.

\bibitem[Sto90]{Stone}
Marshall~Harvey Stone.
\newblock {\em Linear transformations in {H}ilbert space}, volume~15 of {\em
  American Mathematical Society Colloquium Publications}.
\newblock American Mathematical Society, Providence, RI, 1990.
\newblock Reprint of the 1932 original.

\bibitem[vN32]{vN32a}
J.~von Neumann.
\newblock \"{U}ber adjungierte {F}unktionaloperatoren.
\newblock {\em Ann. of Math. (2)}, 33(2):294--310, 1932.

\bibitem[Web09]{Web08}
Andreas Weber.
\newblock Analysis of the laplacian and the heat flow on a locally finite
  graph.
\newblock {\em J. Math. Anal. and Appl.}, 370(1):146--158, 2009.
\newblock \arxiv{0801.0812}.

\bibitem[Woj07]{Woj07}
Rados{\l}aw~K. Wojciechowski.
\newblock Stochastic completeness of graphs.
\newblock {\em Ph. D. Dissertation}, 2007.
\newblock 72 pages. \arxiv{0712.1570}.

\bibitem[Woj09]{Woj09}
Rados{\l}aw~K. Wojciechowski.
\newblock Heat kernel and essential spectrum of infinite graphs.
\newblock {\em Indiana Univ. Math. J.}, 58(3):1419--1441, 2009.
\newblock \arxiv{0802.2745}.

\end{thebibliography}

\end{document}